\documentclass[11pt]{amsart}
\usepackage{amsmath}
\usepackage{amsfonts}
\usepackage{amssymb}
\usepackage{mathrsfs}                  
\usepackage{amsthm}
\usepackage{enumitem}
\usepackage{amscd}
\usepackage{xcolor}
\usepackage[hyperfootnotes=false]{hyperref}
\usepackage{mathtools}
\usepackage{geometry}
\usepackage{bbm}
\usepackage{bm}
\usepackage{bbm}

\geometry{
  includeheadfoot,
  margin=2.54cm
}
\numberwithin{equation}{section}
\setlength\parindent{0pt}

\theoremstyle{plain}
\newtheorem{theorem}{Theorem}[section]
\newtheorem{proposition}[theorem]{Proposition}
\newtheorem{lemma}[theorem]{Lemma}

\newtheorem{assumption}{Assumption}

\theoremstyle{definition}
\newtheorem{definition}[theorem]{Definition}

\theoremstyle{remark}
\newtheorem{remark}[theorem]{Remark}

\newcommand{\ReLU}{{\rm ReLU}}
\newcommand{\eps}{{\varepsilon}}

\newcommand{\rev}[1]{\textcolor{black}{#1}}

\newcommand{\cA}{{\mathcal A}}
\newcommand{\cB}{{\mathcal B}}
\newcommand{\cE}{{\mathcal E}}
\newcommand{\cN}{{\mathcal N}}
\newcommand{\cO}{{\mathcal O}}
\newcommand{\IR}{{\mathbb R}}
\newcommand{\IC}{{\mathbb C}}
\newcommand{\IE}{{\mathbb E}}
\newcommand{\IN}{{\mathbb N}}
\newcommand{\IP}{{\mathbb P}}

\newcommand{\abs}[1]{\left|#1\right|}
\newcommand{\bsnu}{{\boldsymbol \nu}}

\newcommand{\bse}{{\boldsymbol e}}
\newcommand{\bsb}{{\boldsymbol b}}

\newcommand{\E}{\mathbb{E}}
\newcommand{\R}{\mathbb{R}}
\newcommand{\Rl}{\mathrm{R}}
\newcommand{\C}{\mathbb{C}}

\newcommand{\N}{\mathbb{N}}

\renewcommand{\P}{\mathbb{P}}

\newcommand{\Fc}{\mathcal{F}}

\newcommand{\Gc}{\mathcal{G}}

\newcommand{\Hc}{\mathcal{H}}

\newcommand{\normc}[2][]{\left\| #2 \right\|_{#1}}

\begin{document}

\title[DNN expression rates for high-dimensional 
       exp-L\'evy models]{Deep ReLU Network Expression Rates for Option Prices
       \\
       in high-dimensional, exponential L\'evy models}
\author{Lukas Gonon}
\address{Department of Mathematics, University of Munich, Theresienstrasse 39, 80333 Munich, Germany}
\email{gonon@math.lmu.de}

\author{Christoph Schwab}
\address{Seminar for Applied Mathematics, ETH Z\"urich, R\"amistrasse 101, CH-8092 Z\"urich, Switzerland}
\email{christoph.schwab@sam.math.ethz.ch}
\keywords{Deep neural network, L\'evy process, Option pricing, Expression rate, Curse of dimensionality, Rademacher complexity, Barron space}
\maketitle
\begin{abstract}
We study the expression rates of deep neural networks
(DNNs for short) 
for option prices written on baskets of $d$ risky assets, 
whose log-returns are modelled by a multivariate L\'evy process
with general correlation structure of jumps. 
We establish sufficient conditions on the characteristic triplet
of the L\'evy process $X$ that ensure $\varepsilon$ error of
DNN expressed option prices 
with DNNs of size that grows polynomially with respect to $\cO(\varepsilon^{-1})$, 
and with constants implied in $\cO(\cdot)$
which grow polynomially in $d$, 
thereby overcoming the curse of dimensionality (CoD)
and justifying the use of DNNs in financial modelling 
of large baskets in markets with jumps.

In addition, we exploit parabolic smoothing of Kolmogorov partial
integrodifferential equations for certain multivariate L\'evy processes
to present alternative architectures of ReLU DNNs 
that provide $\varepsilon$ expression error 
in DNN size $\cO(|\log(\varepsilon)|^a)$
with exponent $a \sim d$, however, with constants implied 
in $\cO(\cdot)$ growing exponentially with respect to $d$.
Under stronger, dimension-uniform  non-degeneracy conditions
on the L\'evy symbol, 
we obtain algebraic expression rates of option prices 
in exponential L\'evy models which are 
free from the curse of dimensionality.
In this case the ReLU DNN expression rates of prices 
depend on certain sparsity conditions on the characteristic L\'evy triplet.
We indicate several consequences and possible extensions of the 
present results.
\end{abstract}
\tableofcontents
\section{Introduction}
\label{sec:Intro}

Recent years have seen a dynamic development in applications
of deep neural networks (DNNs for short) 
in expressing high-dimensional input-output
relations. This development was driven mainly by the need 
for quantitative modelling of input-output relationships 
subject to large sets of observation data.
Rather naturally, therefore, DNNs have found a large number
of applications in computational finance and in financial engineering.
We refer to the survey Ruf and Wang \cite{Ruf2020} and to the references there.
Without going into details, we only state that the majority of 
activity addresses techniques to employ DNNs in demanding 
tasks in computational finance.
The often striking efficient computational performance of DNN based
algorithms raises naturally the question for theoretical, in particular
mathematical, underpinning of successful algorithms. 
Recent years have seen progress, in particular in the context of
option pricing for Black--Scholes type models, 
for DNN based numerical approximation of
diffusion models on possibly large baskets 
(see, e.g.\ Berner et al.\ \cite{BernerGrohsJentzen2018}, Elbr\"achter et al.\ \cite{EGJS18_787}
and Ito et al.\ \cite{ito2019neural}, Reisinger and Zhang \cite{ReisingerZhang2019} for game-type options).
These references prove that DNN based approximations of
option prices on possibly large baskets of risky assets 
can overcome the so-called curse of dimensionality in the
context of affine diffusion models for the dynamics of the (log-)prices of
the underlying risky assets.
These results could be viewed also as particular instances of
DNN expression rates of certain PDEs on high-dimensional state spaces,
and indeed corresponding DNN expressive power results have been shown
for their solution sets in Grohs et al.\ \cite{HornungJentzen2018}, Gonon et al.\ \cite{Gonon2019}
and the references there.

Since the turn of the century, models beyond the classical
diffusion setting have been employed increasingly in 
financial engineering. 
In particular, L\'{e}vy processes
and their non-stationary generalizations such as Feller--L\'evy processes
(see, e.g., B\"ottcher et al.\ \cite{RSchillingEtAl2013} and the references there)
have received wide attention. 
This can in part be explained by their ability to account for heavy tails of financial data 
and by L\'evy--based models
constituting \emph{hierarchies} of models, comprising in particular
classical diffusion (``Black--Scholes'') models with constant 
volatility that are still widely used in computational finance as a benchmark.
Therefore, 
all results for geometric L\'evy processes in the present paper 
apply in particular to the Black--Scholes model.

The ``Feynman--Kac correspondence'' 
which relates conditional expectations of sufficiently regular functionals
over diffusions to (viscosity) solutions of corresponding Kolmogorov PDEs,
extends to multivariate L\'{e}vy processes.
We mention only 
Nualart and Schoutens \cite{NualSchout2001}, Cont and Tankov \cite{Cont2004}, Cont and Voltchkova \cite{ContVolt2005}, Glau \cite{FnmKacLevyGlau}, Eberlein and Kallsen
\cite[Chapter~5.4]{EberKall19} and the references there.
The Kolmogorov PDE (``Black--Scholes equation'')
in the diffusion case is then replaced by a so-called 
\emph{Partial Integrodifferential Equation (PIDE)}  where the 
fractional integrodifferential operator accounting for the jumps
is related in a one-to-one fashion with the L\'evy measure 
$\nu^d$ of the $\R^d$-valued LP $X^d$.
In particular, L\'evy type models for (log-)returns of 
risky assets result in 
\emph{nonlocal partial integrodifferential equations} for the option price,
which generalize the linear parabolic differential equations which arise
in classical diffusion models.
We refer to Bertoin \cite{BertoinLP1996}, Sato \cite{Sato1999}
for fundamentals on L\'{e}vy processes and 
to B\"ottcher et al.\ \cite{RSchillingEtAl2013} for extensions 
to certain non-stationary settings.
For the use of 
L\'evy processes in financial modelling we refer to 
Cont and Tankov \cite{Cont2004}, Eberlein and Kallsen \cite{EberKall19} and to the references there.
We refer to 
Cont and Voltchkova \cite{ContVolt2005,ContVolt2006},  Matache et al.\ \cite{MvS04_373}, Hilber et al.\ \cite{CMQFBook} for a presentation and for numerical methods for option pricing
in L\'evy models.

The results on DNNs in the context 
of option pricing mentioned above are 
exclusively concerned with models with continuous price processes. 
This naturally raises the question whether DNN based approximations are still 
capable of overcoming the curse of dimensionality in high-dimensional financial 
models with jumps, which have a much richer mathematical structure. 
This question is precisely the subject of this article.
We study the expression rates of DNNs
for prices of options (and the associated PIDEs) written on possibly large baskets of risky assets, 
whose log-returns are modelled by a multivariate L\'evy process
with general correlation structure of jumps. In particular, 
we establish sufficient conditions on the characteristic triplet
of the L\'evy process $X^d$ that ensure $\varepsilon$ error of
DNN expressed option prices 
with DNNs of size $\cO(\varepsilon^{-2})$, 
and with constants implied in $\cO(\cdot)$
which grow polynomially with respect $d$. 
This shows that DNNs are capable to overcome the curse of dimensionality 
also for general exponential L\'evy models. 

Let us outline the scope of our results.
The DNN expression rate results proved here 
give a theoretical justification for neural network 
based non-parametric option pricing methods. 
These have become very popular recently, see for instance the recent survey Ruf and Wang \cite{Ruf2020}.
Our results show that if option prices result from an exponential L\'evy model, 
as described e.g.\ in \cite[Chapter~3.7]{EberKall19},
under mild conditions on the L\'evy-triplets 
these prices can be expressed efficiently by (ReLU) neural networks,
also for high dimensions. 
The result covers, in particular, rather general, multivariate correlation structure in the 
jump part of the L\'{e}vy process, 
for example parametrized by a so-called 
\emph{L\'{e}vy copula}, see Kallsen and Tankov \cite{KT06}, Farkas et al.\ \cite{FRS07}, \cite[Chapter~8.1]{EberKall19}
and the references there.
This extends, at least to some extent, the theoretical foundation 
to the widely used neural network based non-parametric option pricing methodologies
to market models with jumps.

We prove two types of results on DNN expression rate bounds for
European options in exponential L\'evy models, 
with one probabilistic and one ``deterministic'' proof.
The former one is based on concepts from statistical learning theory,
and provides for relevant payoffs 
(baskets, call on max, \ldots) an expression error $\cO(\varepsilon)$ with DNN sizes of
$\cO(\varepsilon^{-2})$ and with constants implied in $\cO(\cdot)$ which grow polynomially in $d$, 
thereby overcoming the curse of dimensionality, whereas the latter is based on
parabolic smoothing of the Kolmogorov equation, and allows us to
prove \emph{exponential expressivity of prices for positive maturities},
i.e.\ an expression error $\cO(\varepsilon)$ with DNN sizes of
$\cO(|\log \varepsilon|^a)$ for some $a>0$, 
albeit with constants implied in $\cO(\cdot)$ 
possibly growing exponentially in $d$.

For the latter approach certain non-degeneracy is required 
on the symbol of the underlying L\'evy process. 
The probabilistic proof of DNN approximation rate results, on the other hand, 
does not require any such assumptions. 
It only relies on the additive structure of the 
semigroup associated to the L\'evy process and existence of moments. 
Thus, the results proved here are specifically tailored to the class of option pricing functions 
(or more generally expectations of exponential L\'evy processes) under European style, plain vanilla payoffs.

The structure of this paper is as follows. 
In Section \ref{sec:ExpLevPIDEs} we review terminology, basic results, and
financial modelling with exponential L\'evy processes. 
In particular, we also recapitulate the corresponding fractional, partial integrodifferential
Kolmogorov equations which generalize the classical Black--Scholes equations 
to L\'evy models.
Section \ref{sec:DNNs} recapitulates notation and basic terminology for deep neural networks
to the extent required in the ensuing expression rate analysis. We focus mainly on so-called
ReLU DNNs, but add that corresponding definitions and also results do hold for more general
activation functions.
In Section \ref{sec:DNNLevy1d} we present a first set of DNN expression rate results, still
in the univariate case. This is, on the one hand, for presentation purposes, as this setting
allows for lighter notation, and to introduce mathematical concepts which will be used
subsequently also for contracts on possibly large basket of L\'evy-driven risky assets. 
We also present an application of the results to neural-network based call option pricing. 
Section \ref{sec:DNNMultivLevy} then has the main results of the present paper: 
expression rate bounds for ReLU DNNs for multivariate, exponential L\'evy models. 
We identify sufficient conditions to obtain expression rates which are free from the curse of 
dimensionality via mathematical tools from statistical learning theory. 
We also develop a second argument based on parabolic Gevrey regularity
with quantified derivative bounds, which even yield exponential expressivity of ReLU DNNs, 
albeit with constants that generally depend on the basket size 
in a possibly exponential way. 
Finally, we develop an argument based on quantified sparsity 
in polynomial chaos expansions and corresponding ReLU expression rates
from Schwab and Zech \cite{SZ19_2592} to prove high algebraic expression rates for ReLU DNNs,
with constants that are independent of the basket size.
We also provide a brief discussion of recent, related
results. 
We conclude in Section \ref{seC:ConclGen} and indicate several possible generalizations of the present results.
\section{Exponential L\'evy models and PIDEs}
\label{sec:ExpLevPIDEs}
\subsection{L\'evy processes}
\label{sec:LP}
Fix a complete probability space $(\Omega,\Fc,\IP)$ 
on which all random elements are defined.

We start with the univariate case.
We recall that an $\mathbb{R}$-valued continuous-time process $(X_t)_{t\geq0}$ 
is called a L\'evy process if it is stochastically continuous, 
it has almost surely RCLL sample paths, 
it satisfies $X_0 = 0$ almost surely, and it has stationary and independent increments. 
See, e.g. Bertoin \cite{BertoinLP1996}, Sato \cite{Sato1999} for discussion and for 
detailed statements of definitions.

It is shown in these references that
a L\'evy process (LP for short) $X$ is characterized by its so-called 
L\'evy triplet $(\sigma^2,\gamma,\nu)$, where $\sigma \geq 0$, $\gamma \in \mathbb{R}$ 
and where $\nu$ is a measure on $(\mathbb{R},\cB(\IR))$ 
with $\nu(\{0\}) = 0$, 
the so-called \emph{jump-measure, or L\'{e}vy-measure} of the LP $X$
which satisfies 
$\int_\mathbb{R} (x^2 \wedge 1)\, \nu (d x) < \infty$. 
For more details on both univariate LPs and 
the multivariate situation we refer to \cite{Sato1999}.

As in the univariate case, multivariate ($\R^d$-valued) LPs $X^d$ 
are completely described by their characteristic triplet 
$(A^d,\gamma^d, \nu^d)$ where $\gamma^d \in \IR^d$ is a drift vector, 
$A^d \in \IR^{d\times d}$ is a symmetric, non-negative definite matrix
denoting the covariance matrix of the Brownian motion part of $X^d$,
and $\nu^d$ is the L\'{e}vy measure describing the jump structure of $X^d$.

To characterize the dependence structure of a L\'evy process 
the drift parameter $\gamma^d$ does not play a role. 
The dependence structure of the diffusion part of $X^d$ is characterized by $A^d$. 
Since the continuous part and
the jump part of $X^d$ are stochastically independent, 
the dependence structure of the jump part of $X^d$ is characterized by the 
L\'{e}vy measure $\nu^d$.

In Kallsen and Tankov \cite{KT06}, a characterization of admissible jump measures $\nu^d$ of the $\R^d$-valued LP $X^d$
has been obtained as \emph{superposition} of marginal, univariate L\'evy measures with
a so-called \emph{L\'{e}vy copula} function.

\subsection{Exponential L\'evy models}
\label{sec:ExpLP}
In this article we are interested in 
estimating expression rates of deep neural networks for 
approximating the function 
$s \mapsto \E[\varphi(s S_T)]$, where $S$ is an exponential of a 
$d$-dimensional L\'evy process and $\varphi \colon \R^d \to \R$ an appropriate function. 
The key motivation for studying such expectations comes from the context of option valuation. 
Thus, we now outline this relation and we will always use the language of option pricing, i.e., 
refer to these expectations as option prices and to $\varphi$ as the payoff. 
This interpretation is justified if $S$ is a 
martingale 
and we state below the conditions on the L\'evy process that guarantee this. 
%


Let the $\R$-valued stochastic process $(S_t)_{t \in [0,T]}$ 
model the price of one risky financial asset. 
Here $T \in (0,\infty)$ is a fixed, finite time horizon.  
An \emph{exponential L\'evy model} assumes 
that $S_t = S_0 e^{rt + X_t}$, $t \in [0,T]$, 
where $r \in \R$ denotes the (constant) interest rate. 
The model could be specified either under a real-world measure or 
directly under a risk-neutral measure 
(constructed using the general change of measure result in 
\cite[Theorems~33.1 and 33.2]{Sato1999} 
of which the Esscher transform Gerber and Shiu \cite{GerberShiu1994} 
is a particular case, or 
by minimizing certain functionals over the family of 
equivalent martingale measures, see for instance 
Jeanblanc et al.\ \cite{jeanblanc2007}, Esche and Schweizer \cite{ESCHE2005} and the references therein).
The latter situation means that $(S_t e^{-rt})_{t \in [0,T]}$ is a martingale, 
which is equivalent to the following condition on the L\'evy triplet of $X$
(e.g. Hilber et al.\ \cite[Lemma~10.1.5]{CMQFBook})
\begin{equation}\label{eq:martingale1} \gamma = - \frac{\sigma^2}{2}
-  \int_{\R}(e^y -1 - y \, \mathbbm{1}_{\{|y|\leq 1\}})\nu(d y), \quad \int_{\{|y|>1\}} e^{y} \nu(d y) < \infty.
\end{equation}
For a $d$-dimensional L\'evy process $X^d$, \cite[Theorem~25.17]{Sato1999} 
shows that the multivariate geometric L\'evy process
$(e^{X^d_{t,1}},\ldots,e^{X^d_{t,d}})_{t \geq 0}$ 
is a martingale if and only if 
\begin{equation}\label{eq:martingaleD}
\begin{aligned} \int_{\{\|y\|> 1\}} e^{ y_i} \nu^d(d y) < \infty, \quad & \text{for } i=1,\ldots,d,
\\ 
\gamma_i^d = - \frac{A_{ii}^d}{2} - \int_{\R^d} (e^{y_i}-1- y_i \mathbbm{1}_{\{\|y\|\leq 1\}}) \nu^d(d y), \quad & \text{for } i=1,\ldots,d.
\end{aligned}
\end{equation}
This condition ensures that the functions defined in \eqref{eq:optionPrice} and \eqref{eq:optionPriceD} 
below represent option prices. However, the condition is not needed for the proof of the results later, 
so we do not need to impose \eqref{eq:martingale1} or \eqref{eq:martingaleD} 
in any of the results proved in the article. 
We will, however, impose certain moment or regularity conditions.

For more details on exponential L\'evy models, 
with particular attention to their use in financial modelling, 
we refer to Cont and Tankov \cite{Cont2004}, Lamberton and Mikou \cite{LambMik08} and Eberlein and Kallsen \cite{EberKall19} and the references there.
\subsection{PIDEs for option prices}
\label{sec:PIDEOptPr}
Let us first discuss the case of a univariate exponential L\'evy model.
For the multivariate case we refer to Section~\ref{sec:DNNMultivLevy} 
(cf.\ \eqref{eq:optionPriceD} and \eqref{eq:MultdPDEs} below).

Consider a European style option with payoff function 
$\varphi \colon (0,\infty) \to [0,\infty)$ and 
at most polynomial ($p$-th order) growth at infinity. 
Assume for this subsection that \eqref{eq:martingale1} is satisfied.

The value of the option (under the chosen risk-neutral measure) 
at time $t \in [0,T]$ 
is given as the conditional expectation 
$C_t =\E[e^{-r(T-t)}\varphi(S_T)|\Fc_t]$ with $\Fc_t = \sigma(S_v:v\in[0,t])$. 
By the Markov property 
$C_t = C(t,S_t)$ and so, 
switching to  time-to-maturity $\tau = T-t$, 
$u(\tau,s)=C(T-\tau,s)$ 
we can rewrite the option price as follows:
\begin{equation}
\label{eq:optionPrice}
u(\tau,s)  
=  \E[e^{-r\tau}\varphi(S_T)|S_t =s]
=  \E\left[e^{-r\tau}\varphi\big(s\exp(r\tau+X_\tau)\big)\right]
\end{equation}
for $\tau \in [0,T]$, $s \in (0,\infty)$,
where the second step uses that $X_T - X_t$ is independent of $X_t$ 
and has the same distribution as $X_{T-t}$. 
If the payoff function $\varphi$ is Lipschitz-continuous on $\R$ 
and the L\'evy process fulfils either $\sigma>0$ or a certain non-degeneracy condition on $\nu$, 
then $u$ is continuous on $[0,T)\times(0,\infty)$, 
it is $C^{1,2}$ on $(0,T)\times(0,\infty)$ 
and it satisfies the 
\emph{linear, parabolic partial integrodifferential equation} (PIDE for short)
\begin{equation}\label{eq:PIDE}
\begin{aligned}
\frac{\partial u}{\partial \tau}(\tau,s) 
& -rs\frac{\partial u}{\partial s}(\tau,s) - \frac{\sigma^2 s^2}{2} \frac{\partial^2 u}{\partial s^2}(\tau,s) - ru(\tau,s) 
\\ 
& - \int_{\R}\left(u(\tau,se^y)-u(\tau,s)-s(e^y-1)\frac{\partial u}{\partial s}(\tau,s)\right) \nu(d y) 
= 0
\end{aligned} 
\end{equation}
on $[0,T)\times(0,\infty)$ with initial condition 
$u(0,\cdot) = \varphi$, see for instance Proposition~2 in Cont and Voltchkova \cite{ContVolt2005}. 
If the non-degeneracy condition on $\nu$ is dropped, 
one can still characterize $u$ (transformed to log-price variables) as 
the unique viscosity solution to the PIDE above. 
This is established e.g. in  \cite{ContVolt2005} (see also Proposition~3.3 in \cite{ContVolt2006}). 
For our purposes the representation \eqref{eq:optionPrice} is more suitable. 
However, by using this characterization 
(also called Feynman--Kac representation for viscosity-solutions of PIDEs, see Barles et al.\ \cite{BBP1997}) 
the results 
formulated below also provide DNN approximations for PIDEs.
Finally, note that the interest rate $r$ may also be directly modelled 
as a part of $X$ by modifying $\gamma$.
To simplify the notation we set $r=0$ in what follows.
We also remark that all expression rate results hold verbatim
for assets with a constant dividend payment 
(see, e.g., \cite[Eqn. (3.1)]{LambMik08} for the 
functional form of the exponential L\'evy model in that case).
\section{Deep neural networks (DNNs)}
\label{sec:DNNs}
This article is concerned with establishing expression rate bounds of deep neural networks (DNNs) for prices of options (and the associated PIDEs) written on possibly large baskets of risky assets, 
whose log-returns are modelled by a multivariate L\'evy process with general correlation structure of jumps. The term ``expression rate'' denotes the rate of convergence to $0$ of the error between the option price and its DNN approximation. This rate can be directly translated to quantify the DNN size required to achieve a given approximation accuracy. For instance, in Theorem~\ref{prop:Ddresult} below an expression rate of $\mathfrak{q}^{-1}$ is established and one may even choose $\mathfrak{q} = 2$ in many relevant cases. We now give a brief introduction to DNNs.

Roughly speaking,
a deep neural network (DNN for short) 
is a function built by multiple concatenations of affine transformations 
with a (typically non-linear) activation function. 
This gives rise to a parametrized family of non-linear maps,
see for example Petersen and Voigtlaender \cite{PETERSEN2018296} or Buehler et al.\ \cite[Section~4.1]{Buehler2018} 
and the references there.

Here we follow current practice and refer to the collection of parameters $\Phi$ 
as \emph{``the neural network''} and 
denote by $\Rl(\Phi)$ its realization, that is, the function defined by these parameters. 
More specifically, we use the following terminology (see for example Section~2 in Opschoor et al.\ \cite{Opschoor2020}): 
firstly, we fix a function $\varrho \colon \R \to \R $ (referred to as the activation function) 
which is applied componentwise to vector-valued inputs.

\begin{definition}
	\label{def:DNN}
	Let $d, L \in \N$. 
	A neural network (with $L$ layers and $d$-dimensional input) is a collection 
	\[
	\Phi = \big((A_1,b_1),\ldots,(A_L,b_L)\big),
	\]
	where 
	$N_0=d$, $N_i \in \N$, $A_i \in \R^{N_i \times N_{i-1}}$, 
	$b_i \in \R^{N_i}$ for $i=1,\ldots,L$ and $(A_i,b_i)$ 
	are referred to as the weights of the $i$-th layer of the NN.
	
	The associated realization of $\Phi$ is the mapping 
	\[
	\begin{aligned} \Rl(\Phi)  \colon  \R^d &\to \R^{N^L}, \quad 
	x & \mapsto \Rl(\Phi)(x)= A_L x_{L-1} + b_L
	\end{aligned}
	\]
	where $x_{L-1}$ is given as
	\[ x_0=x, \quad x_l = \varrho(A_l x_{l-1} + b_l) \text{ for } l=1,\ldots,L-1.
	\]
	We call $M_j(\Phi) = \|A_j\|_0 + \|b_j\|_0$ the number of (non-zero) weights 
	in the $j$-th layer and $M(\Phi) = \sum_{j=1}^L M_j(\Phi)$ the number of weights 
	of the neural network $\Phi$. 
	We also refer to $M(\Phi)$ as the \textit{size} of the neural network, 
	write $L(\Phi) = L$ for the number of layers of $\Phi$ 
	and refer to $N_{o}(\Phi)=N_L$ as the output dimension.
\end{definition}

We refer to Section~2 in Opschoor et al.\ \cite{Opschoor2020} for further details. 

The following lemma shows that concatenating $n$ affine transformations 
with distinct neural networks and taking their weighted average 
can itself be represented as a neural network. 
The number of non-zero weights in the resulting neural network 
can be controlled by the number of non-zero weights in the original neural networks. 
The proof of the lemma is based on a simple extension of the \textit{full parallelization} 
operation for neural networks (see  \cite[Proposition~2.5]{Opschoor2020}) 
and refines Grohs et al.\ \cite[Lemma~3.8]{HornungJentzen2018}.
\begin{lemma} \label{lem:averageNN}
	Let $d,L,n \in \N$ and let $\Phi^1,\ldots,\Phi^n$ be neural networks with $L$ layers, 
	$d$-dimensional input and equal output dimensions. 
	Let $D_1,\ldots,D_n$ be $d\times d$-matrices, 
	$c_1,\ldots,c_n \in \R^d$ and 
	$w_1,\ldots,w_n \in \R$.
	
	Then there exists a neural network $\psi$ such that 
	\begin{equation}\label{eq:averageNN} 
	\Rl(\psi)(x) = \sum_{i=1}^n w_i \Rl(\Phi^i)(D_i x + c_i) \quad \text{ for all } x\in \R^d
	\end{equation}
	and $M_j(\psi) \leq \sum_{i=1}^n M_j(\Phi^i)$ for $j=2,\ldots,L$. 
	If, in addition, $D_1,\ldots,D_n$ are diagonal matrices 
	and $c_1=\cdots=c_n=0$, 
	then $M(\psi) \leq \sum_{i=1}^n M(\Phi^i)$.
\end{lemma}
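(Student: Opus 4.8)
The plan is to construct $\psi$ explicitly by building a single ``parallel'' network whose layers stack the corresponding layers of all the $\Phi^i$, and then fuse the input and output appropriately. First I would preprocess each network: since $\Rl(\Phi^i)(D_i x + c_i)$ precomposes $\Phi^i$ with the affine map $x \mapsto D_i x + c_i$, I can absorb this into the first-layer weights of $\Phi^i$. Concretely, if $\Phi^i = ((A_1^i,b_1^i),\ldots,(A_L^i,b_L^i))$, then replacing $(A_1^i, b_1^i)$ by $(A_1^i D_i,\, A_1^i c_i + b_1^i)$ produces a network $\widetilde{\Phi}^i$ with the same layer structure satisfying $\Rl(\widetilde{\Phi}^i)(x) = \Rl(\Phi^i)(D_i x + c_i)$. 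This costs nothing in layers $2,\ldots,L$, which is exactly why the bound $M_j(\psi) \leq \sum_{i=1}^n M_j(\Phi^i)$ is stated only for $j \geq 2$.

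Next I would apply the full parallelization construction from \cite[Proposition~2.5]{Opschoor2020} to the networks $\widetilde{\Phi}^1,\ldots,\widetilde{\Phi}^n$. Because all of them share the same input $x$ (they all take $\R^d$ input, and each already has $D_i, c_i$ folded in), this is the slightly refined parallelization where the input layers are shared rather than block-diagonally stacked: each $\widetilde{A}_1^i$ acts on the common $x$, so the first-layer matrices are stacked vertically into a single $(\sum_i N_1^i) \times d$ matrix, while for layers $l = 2,\ldots,L$ the weight matrices $A_l^i$ are placed in block-diagonal position. The resulting network, call it $\Phi^{\mathrm{par}}$, has $L$ layers and realizes the concatenated vector $x \mapsto (\Rl(\widetilde{\Phi}^1)(x),\ldots,\Rl(\widetilde{\Phi}^n)(x))$. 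By construction the nonzero weights in layer $j \geq 2$ are exactly the disjoint union over $i$ of those of $\widetilde{\Phi}^i$, giving $M_j(\Phi^{\mathrm{par}}) = \sum_{i=1}^n M_j(\widetilde{\Phi}^i) = \sum_{i=1}^n M_j(\Phi^i)$ for $j=2,\ldots,L$.

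Finally I would take the weighted average. Since the outputs have equal dimension $N_o$, I fold the scalars $w_i$ together with the summation into the final affine map by replacing $(A_L^{\mathrm{par}}, b_L^{\mathrm{par}})$: I premultiply the $i$-th output block of the last layer by $w_i$ and sum the blocks, i.e.\ left-multiply $A_L^{\mathrm{par}}$ and $b_L^{\mathrm{par}}$ by the matrix $[w_1 I_{N_o} \mid \cdots \mid w_n I_{N_o}]$. This collapses the stacked output into $\sum_{i=1}^n w_i \Rl(\widetilde{\Phi}^i)(x)$, yielding \eqref{eq:averageNN}. Crucially, merging the weighting into the existing last layer rather than appending a new layer keeps the layer count at $L$ and, by the $\|\cdot\|_0$ accounting, does not increase $M_L$ beyond $\sum_i M_L(\Phi^i)$ (scaling a row by $w_i$ and summing zero-padded blocks cannot create new nonzeros beyond those already present).

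The main obstacle, and the one point requiring care, is the bookkeeping for the first layer in the last assertion. The absorption of $D_i$ into $A_1^i D_i$ can in general destroy sparsity: a sparse $A_1^i$ times a dense $D_i$ need not be sparse, so $M_1(\widetilde{\Phi}^i)$ may exceed $M_1(\Phi^i)$, and hence $M_1(\psi) \leq \sum_i M_1(\Phi^i)$ may fail. This is precisely why the general claim omits $j=1$. Under the additional hypothesis that each $D_i$ is diagonal and $c_i = 0$, however, $A_1^i D_i$ has the same zero pattern as $A_1^i$ (diagonal scaling preserves support column-by-column) and $A_1^i c_i + b_1^i = b_1^i$, so $M_1(\widetilde{\Phi}^i) = M_1(\Phi^i)$; summing over all layers then gives $M(\psi) \leq \sum_{i=1}^n M(\Phi^i)$, establishing the final sentence. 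I would verify the $\|\cdot\|_0$ sub-additivity under vertical stacking and block-diagonal placement carefully, as that is where the sparsity bound is genuinely earned.
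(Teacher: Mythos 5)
Your construction is correct and is essentially the paper's own proof: the paper likewise absorbs $(D_i,c_i)$ into the first-layer weights $(A_1^i D_i,\, A_1^i c_i + b_1^i)$, stacks these vertically over a shared input, places layers $2,\ldots,L-1$ block-diagonally (the same refinement of \cite[Proposition~2.5]{Opschoor2020} you invoke), and fuses the output via $A_L = (w_1 A_L^1 \,\cdots\, w_n A_L^n)$, $b_L = \sum_i w_i b_L^i$, with identical sparsity bookkeeping in each layer. Your diagnosis of why $j=1$ is excluded in general and restored in the diagonal case matches the paper exactly (note only that if some diagonal entry of $D_i$ vanishes one gets $\|A_1^i D_i\|_0 \le \|A_1^i\|_0$ rather than equality, which is all the claimed inequality requires).
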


\begin{proof} 
	Write for $i=1,\ldots,n$
	\[ \Phi^i =\left((A_1^i,b_1^i),\ldots,(A_L^i,b_L^i)\right)
	\]
	and define the block matrices 
	\begin{alignat*}{2}
	A_1^{n+1} &=  \begin{pmatrix}
	A_1^1 D_1 \\
	\vdots    \\
	A_1^n D_n
	\end{pmatrix}, \quad && b_1^{n+1} =  \begin{pmatrix}
	A_1^1 c_1 + b_1^1 \\
	\vdots    \\
	A_1^n c_n + b_1^n
	\end{pmatrix}, 
	\\
	A_j^{n+1} & =  \begin{pmatrix}
	A_j^1 &\multicolumn{2}{c}{\text{\kern0.5em\smash{\raisebox{-1ex}{\LARGE 0}}}}\\
	&  \ddots &   \\
	\multicolumn{2}{c}{\text{\kern-2.0em\smash{\raisebox{-.5ex}{\LARGE 0}}}}& A_j^n
	\end{pmatrix}, \quad   && b_j^{n+1}=  \begin{pmatrix}
	b_j^1 \\
	\vdots    \\
	b_j^n
	\end{pmatrix} \text{ for $j=2,\ldots,L-1$},
	\\
	A_L^{n+1} & =   \begin{pmatrix}
	w_1 A_L^1 &  \cdots  & w_n A_L^n
	\end{pmatrix}  \text{ and } && b_L^{n+1} =  
	w_1 b_L^1+\cdots+
	w_n b_L^n.
	\end{alignat*}
	Set $\psi= ((A_1^{n+1},b_1^{n+1}),\ldots,(A_L^{n+1},b_L^{n+1})) $. Then, for $l\in \{1,\ldots,L-1\}$ and $x \in \R^d$, 
	it is straightforward to verify that $x_l$ has a block structure 
	(with subscripts indicating the layers and superscripts indicating the blocks)
	\[ x_l  =  \begin{pmatrix}
	x_l^1 \\
	\vdots    \\
	x_l^n
	\end{pmatrix},
	\]
	with $x_1^i = \varrho(A_1^i (D_i x + c_i) + b_1^i)$,  $x_l^i = \varrho(A_l^i x_{l-1}^i + b_l^i)$ for $l=2,\ldots,L-1$ and 
	\[
	\Rl(\psi)(x) = A^{n+1}_L x_{L-1} + b^{n+1}_L = \sum_{i=1}^n w_i(A_L^i x_{L-1}^i + b_L^i).
	\]
	Hence, \eqref{eq:averageNN} is satisfied and
	\[ \begin{aligned}
	M_j(\psi) &= M_j(\Phi^1)+\cdots+M_j(\Phi^n) \text{ for $j=2,\ldots,L-1$},
	\\
	M_L(\psi) & \leq M_L(\Phi^1) \mathbbm{1}_{\{w_1 \neq 0\}} +\cdots+  M_L(\Phi^n) \mathbbm{1}_{\{w_n \neq 0\}}.
	\end{aligned}
	\]
	If in addition $D_1,\ldots,D_n$ are diagonal matrices and $c_1=\cdots=c_n=0$, then $\|A_1^i D_i\|_0 = \|A_1^i\|_0$ and therefore $M_1(\psi) = M_1(\Phi^1)+\cdots+M_1(\Phi^n)$. Thus, in this situation, 
	$M(\psi) = \sum_{j=1}^L M_j(\psi) \leq \sum_{j=1}^L \sum_{i=1}^n M_j(\Phi^i) = \sum_{i=1}^n M(\Phi^i)$, as claimed. 
\end{proof}
\section{DNN approximations for univariate L\'evy models}
\label{sec:DNNLevy1d}
We study DNN expression rates for option 
prices under (geometric) L\'evy models for asset prices,
initially here in one spatial dimension. 
We present two expression rate estimates for ReLU DNNs, 
which are based on distinct mathematical arguments: the first, probabilistic 
argument builds on ideas used in 
recent works Gonon et al.\ \cite{Gonon2019}, Beck et al.\ \cite{BGJ20_889} and the references there. 
However, 
for the key step of the proof a different technique is used, which is
based on the Ledoux-Talagrand contraction principle (Theorem~4.12 in Ledoux and Talagrand \cite{Ledoux2013}) 
and statistical learning.
This new approach is not only technically less involved 
(in comparison to, e.g., the techniques used in \cite{Gonon2019}), 
but it also allows for weaker assumptions on the activation function,
see Proposition~\ref{prop:1dresult} below. 
Alternatively, under slightly different hypotheses on the activation function 
one can also rely on \cite[Lemma~2.16]{Gonon2019}, 
see Proposition~\ref{prop:1dresult2} below.
The probabilistic arguments result in, essentially, 
$\eps$-complexity of DNN expression of order $\eps^{-2}$.
The second argument draws on parabolic (analytic) regularity furnished by the corresponding
Kolmogorov equations, and 
results in far stronger, exponential expression rates, i.e., 
with an $\eps$-complexity of DNN expression scaling, essentially, 
polylogarithmic with respect to $0< \eps < 1$.
As we shall see in the next section, however, 
the latter argument is in general subject to the curse of dimensionality.
\subsection{DNN expression rates: probabilistic argument}
\label{sec:ProbArg}
We fix $0< a < b < \infty$ and measure the approximation error 
in the uniform norm on $[a,b]$. 
Recall that $M(\Phi)$ denotes the number of (non-zero) weights 
of a neural network $\Phi$ and $\Rl(\Phi)$ is the realization of $\Phi$. 
Consider the following 
exponential integrability condition on the L\'evy measure $\nu$: for some $p\geq 2$,
\begin{equation}
\label{eq:expMoment} 
\int_{\{|y|>1\}} e^{py} \nu(d y) < \infty. 
\end{equation}
Furthermore, for any function $g$ we denote by $\mathrm{Lip}(g)$ the  best Lipschitz constant for $g$.
\begin{proposition} \label{prop:1dresult} Suppose the moment condition \eqref{eq:expMoment} holds.
	Suppose further the payoff $\varphi$ can be approximated 
	by neural networks, 
	that is, given a payoff function $s\mapsto \varphi(s)$ 
	there exists constants $c>0$, $q\geq0$ such that 
	for any $\varepsilon \in (0,1]$ 
	there exists a neural network $\phi_{\varepsilon}$ with  
	\begin{align} \label{eq:NNclose}
	|\varphi(s)-\Rl(\phi_{\varepsilon})(s)| & \leq \varepsilon c (1+|s|), \quad s \in (0,\infty),
	\\ \label{eq:NNsparse} M(\phi_{\varepsilon}) &\leq c \varepsilon^{-q}, 
	\\ \label{eq:NNlipschitz} \mathrm{Lip}(\Rl(\phi_{\varepsilon})) & \leq c.
	\end{align}
	Then there exists $\kappa \in [c,\infty)$ (depending on the interval $[a,b]$)
	and neural networks $\psi_{\varepsilon}$, $\varepsilon \in (0,1]$,  
	such that for any target accuracy $\varepsilon \in (0,1]$ 
	the number of weights is bounded by 
	$M(\psi_{\varepsilon}) \leq \kappa \varepsilon^{-2-q}$ 
	and 
	the approximation error between the neural network $\psi_{\varepsilon}$ 
	and the option price is at most $\varepsilon$, that is, 
	\[
	\sup_{s \in [a,b]} |u(T,s) - \Rl(\psi_{\varepsilon})(s)| \leq \varepsilon.
	\]
\end{proposition}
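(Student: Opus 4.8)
The plan is to approximate the price $u(T,s)=\E[\varphi(se^{X_T})]$ (which is \eqref{eq:optionPrice} with $r=0$, $\tau=T$) in two stages: first replace the payoff by its network surrogate, then replace the expectation by an empirical average that a contraction argument certifies to be uniformly accurate on $[a,b]$. First I would set $\tilde u(s):=\E[\Rl(\phi_{\varepsilon})(se^{X_T})]$ and estimate, using \eqref{eq:NNclose} and $|\E[\cdot]|\le\E[|\cdot|]$,
\[
\sup_{s\in[a,b]}|u(T,s)-\tilde u(s)| \le \sup_{s\in[a,b]}\varepsilon c\,\E\bigl[1+se^{X_T}\bigr] = \varepsilon c\,\bigl(1+b\,\E[e^{X_T}]\bigr),
\]
which is $O(\varepsilon)$ since \eqref{eq:expMoment} with $p\ge2\ge1$ gives $\E[e^{X_T}]<\infty$.

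Next I would discretize the remaining expectation. Drawing i.i.d.\ copies $X_T^{(1)},\dots,X_T^{(n)}$ of $X_T$, set $\hat u_n(s):=\tfrac1n\sum_{j=1}^n\Rl(\phi_{\varepsilon})(se^{X_T^{(j)}})$. Each summand is $\Rl(\phi_{\varepsilon})$ precomposed with the scalar diagonal scaling $s\mapsto e^{X_T^{(j)}}s$, so Lemma~\ref{lem:averageNN} (with diagonal $D_j=e^{X_T^{(j)}}$, $c_j=0$ and weights $1/n$) produces a single network $\psi$ with $\Rl(\psi)=\hat u_n$ and, by the diagonal case of that lemma, $M(\psi)\le n\,M(\phi_{\varepsilon})\le c\,n\,\varepsilon^{-q}$ using \eqref{eq:NNsparse}.

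The crux is to show that for $n$ of order $\varepsilon^{-2}$ some realization of the samples renders $\hat u_n$ uniformly $\varepsilon$-close to $\tilde u$ on $[a,b]$. By the standard symmetrization inequality, $\E[\sup_{s\in[a,b]}|\hat u_n(s)-\tilde u(s)|]$ is controlled by twice the Rademacher complexity of the class $\{s\mapsto\Rl(\phi_{\varepsilon})(se^x):s\in[a,b]\}$. Here I would invoke the Ledoux--Talagrand contraction principle (Theorem~4.12 in \cite{Ledoux2013}): since $\Rl(\phi_{\varepsilon})$ is Lipschitz with constant at most $c$ by \eqref{eq:NNlipschitz}, the nonlinear network is peeled off at the cost of a factor $c$, leaving the complexity of the affine family $\{s\mapsto s\,e^x\}$. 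This is elementary: with Rademacher signs $\sigma_1,\dots,\sigma_n$ one has $\sup_{s\in[a,b]}|s\sum_j\sigma_j e^{x_j}|\le b\,|\sum_j\sigma_j e^{x_j}|$, and Cauchy--Schwarz together with $\E_\sigma[(\sum_j\sigma_j e^{x_j})^2]=\sum_j e^{2x_j}$, followed by averaging over the samples, bounds the complexity by $cb\sqrt{\E[e^{2X_T}]}/\sqrt n$, finite because \eqref{eq:expMoment} with $p\ge2$ yields $\E[e^{2X_T}]<\infty$. Thus $\E[\sup_s|\hat u_n-\tilde u|]\le 2C_2/\sqrt n$; taking $n\sim\varepsilon^{-2}$ makes the right-hand side $\le\varepsilon$, and since a nonnegative random variable with expectation $\le\varepsilon$ cannot exceed $\varepsilon$ almost surely, there is at least one realization of $X_T^{(1)},\dots,X_T^{(n)}$ attaining a value $\le\varepsilon$, which I fix to obtain the (now deterministic) network $\psi_{\varepsilon}$.

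Combining the two contributions, $\sup_{s\in[a,b]}|u(T,s)-\Rl(\psi_{\varepsilon})(s)|\le \varepsilon c(1+b\,\E[e^{X_T}])+\varepsilon$; replacing $\varepsilon$ throughout by $\varepsilon/(1+c(1+b\,\E[e^{X_T}]))$ absorbs the prefactor and yields error $\le\varepsilon$, while the size becomes $M(\psi_{\varepsilon})\le\kappa\,\varepsilon^{-2-q}$ with $\kappa$ (enlarged if necessary so that $\kappa\ge c$) depending only on $c$, $[a,b]$ and the moments $\E[e^{X_T}],\E[e^{2X_T}]$. I expect the delicate step to be the complexity estimate: its entire purpose is that contraction collapses the potentially depth- and width-dependent complexity of the network class into the Lipschitz constant times the trivial complexity of the scalar family $s\mapsto se^x$, which is exactly what keeps $\kappa$ free of any architectural -- and, in the multivariate analogue, dimensional -- dependence. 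A minor technical point is that $\Rl(\phi_{\varepsilon})$ need not vanish at $0$; I would handle this by centering $\Rl(\phi_{\varepsilon})(\cdot)-\Rl(\phi_{\varepsilon})(0)$ before the contraction, the resulting $s$-independent constant contributing a term absorbed into the same $O(1/\sqrt n)$ bound.
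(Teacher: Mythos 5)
Your proposal is correct and follows essentially the same route as the paper's own proof: payoff replacement via \eqref{eq:NNclose}, symmetrization plus the Ledoux--Talagrand contraction principle applied to the centered function $\Rl(\phi_\varepsilon)(\cdot)-\Rl(\phi_\varepsilon)(0)$, a Cauchy--Schwarz bound using $\E[e^{2X_T}]<\infty$, selection of a good realization by the positive-probability (Markov-type) argument, and Lemma~\ref{lem:averageNN} with diagonal scalings to realize the empirical average as a network of size $nM(\phi_\varepsilon)$ with $n\sim\varepsilon^{-2}$. The only cosmetic differences are your slightly more direct selection argument in place of the paper's explicit Markov inequality and your omission of the (routine) measurability remark for the supremum over $[a,b]$, which the paper handles by restricting to rational $s$.
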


\begin{remark} \label{rmk:q=0}
	In relevant examples such as, e.g., plain vanilla European options,
	the initial condition can be represented \textit{exactly} as a neural network $\phi$. 
	Then one can choose $\phi_{\varepsilon}=\phi$ for all $\varepsilon \in (0,1]$ 
	and so \eqref{eq:NNclose}--\eqref{eq:NNlipschitz} is satisfied with $q=0$, 
	$c=\max\{M(\phi),\mathrm{Lip}(\Rl(\phi))\}$. 
	Examples include call options, straddles, and butterfly payoff functions 
	(when $\varrho$ is the ReLU activation function given by $x \mapsto \max\{x,0\}$).
\end{remark}
\begin{remark}
	In Proposition~\ref{prop:1dresult} the time horizon $T>0$ is finite and fixed. 
	As evident from the proof, the constant $\kappa$ depends on $T$. 
\end{remark}
\begin{proof} 
	Let $\varepsilon \in (0,1]$ be the given target accuracy and fix
	$\bar{\varepsilon} \in (0,1]$ (to be specified later). 
	Denote $\phi = \phi_{\bar{\varepsilon}}$.
	First, \eqref{eq:NNclose} and \eqref{eq:NNlipschitz} 
	show for any $s \in (0,\infty)$ that 
	\[ \begin{aligned}
	|\varphi(s)|&\leq |\varphi(s)-\Rl(\phi)(s)|+|\Rl(\phi)(s)-\Rl(\phi)(0)| + |\Rl(\phi)(0)|
	\\ & \leq  \bar{\varepsilon} c (1+|s|) + c|s|+|\Rl(\phi)(0)|.
	\end{aligned}
	\]
	Thus, 
	$\varphi$ is at most linearly growing at $\infty$. 
	Hence we obtain $\E[\varphi(se^{X_T})] < \infty$, 
	since even the second exponential moment is finite, i.e.,
	\begin{equation}
	\label{eq:expMoment2} 
	\E[e^{2 X_T}] < \infty,
	\end{equation}
	due to the assumed integrability \eqref{eq:expMoment} 
	of the L\'evy measure and Sato \cite[Theorem~25.17]{Sato1999}.
	
	Now recall that 
	\[ 
	u(T,s) = \E[\varphi(se^{X_T})]. 
	\]
	Combining this with assumption \eqref{eq:NNclose} yields for all $s \in [a,b]$
	\begin{align}\nonumber 
	|u(T,s) - \E[\Rl(\phi)(se^{X_T})]| 
	& \leq \E[|\varphi(se^{X_T}) - \Rl(\phi)(se^{X_T})|] 
	\\ & \leq \bar{\varepsilon} c (1+|s|\E[e^{ X_T}]) 
	\leq \bar{\varepsilon} c_1 \label{eq:auxEq4}
	\end{align}
	with the constant $c_1 = c (1+b\E[e^{ X_T}])$ being finite due to \eqref{eq:expMoment2}.
	
	In the second step, let $X^1,\ldots,X^n$ denote $n$ i.i.d.\ copies of $X$ 
	and introduce an independent collection of Rademacher random variables 
	$\varepsilon_1,\ldots,\varepsilon_n$. Write $f(s) = \Rl(\phi)(s)-\Rl(\phi)(0)$ for $s \in (0,\infty)$. 
	Note that the mapping 
	$
	\R^n \times \R^n \ni (x,y) \mapsto \sup_{s \in [a,b]} \left| \sum_{k=1}^n y_k f(s e^{x_k}) \right| 
	$ is Borel-measurable, because the supremum over $s \in [a,b]$ equals 
	the supremum over $s \in [a,b] \cap \mathbbm{Q}$ due to continuity of $f$ and 
	the pointwise supremum of a countable collection of measurable functions is itself measurable. 
	The same reasoning guarantees that the suprema over $s \in [a,b]$ in \eqref{eq:auxEq50}-\eqref{eq:auxEq1} 
	below are indeed random variables, 
	because they are equal to the respective suprema over $s \in [a,b] \cap \mathbbm{Q}$.
	
	Using independence and a standard symmetrization argument
	(see for example Boucheron et al.\ \cite[Lemma~11.4]{Boucheron2013}) 
	we obtain
	\begin{align} \nonumber
	& \E\left[\sup_{s \in [a,b]} \left| 
	\E[\Rl(\phi)(se^{X_T})] - \frac{1}{n} \sum_{k=1}^n \Rl(\phi)(se^{X_T^k}) \right| \right]  
	\\ \label{eq:auxEq50} 
	& \leq 2
	\E\left[\sup_{s \in [a,b]} \left| \frac{1}{n} \sum_{k=1}^n \varepsilon_k f(se^{X_T^k}) \right| \right] . 
	\end{align}
	Elementary properties of conditional expectations in the first step 
	and Theorem~4.12 in Ledoux and Talagrand \cite{Ledoux2013} 
	(with $T$ in that result chosen as $T_{x_1,\ldots,x_n}=\{t \in \R^n \colon t_1=se^{x_1},\ldots,t_n=se^{x_n} \text{ for some } s \in [a,b] \}$) 
	in the second step show that
	\begin{align} \nonumber
	& 2 \E\left[\sup_{s \in [a,b]} \left| \frac{1}{n} \sum_{k=1}^n \varepsilon_k f(se^{X_T^k}) \right| \right]  
	\\ \nonumber & = \frac{2}{n}
	\E\left[ \left.\E\left[\sup_{t \in T_{x_1,\ldots,x_n}} 
	\left| \sum_{k=1}^n \varepsilon_k f(t_k) \right|\right]\right|_{x_1=X^1_T,\ldots,x_n=X^n_T} \right]
	\\ \nonumber & \leq \frac{4}{n} \mathrm{Lip}(\Rl(\phi))
	\E\left[ \left.\E\left[\sup_{t \in T_{x_1,\ldots,x_n}} 
	\left| \sum_{k=1}^n \varepsilon_k t_k \right|\right]\right|_{x_1=X^1_T,\ldots,x_n=X^n_T} \right]    
	\\\nonumber & = \frac{4}{n} \mathrm{Lip}(\Rl(\phi))
	\E\left[\sup_{s \in [a,b]} \left| \sum_{k=1}^n \varepsilon_k se^{X_T^k} \right| \right] 
	\\ & \leq \frac{4 b }{n} \mathrm{Lip}(\Rl(\phi)) \E\left[\left| \sum_{k=1}^n \varepsilon_k e^{X_T^k} \right| \right].\label{eq:auxEq1} \end{align}
	On the other hand, one may apply Jensen's inequality, independence and $\E[\varepsilon_k \varepsilon_l]=\delta_{k,l}$ to estimate
	\[\begin{aligned} \E\left[\left| \sum_{k=1}^n \varepsilon_k e^{X_T^k} \right| \right] & \leq \E\left[\left| \sum_{k=1}^n \varepsilon_k e^{X_T^k} \right|^2 \right]^{1/2} = \left(\sum_{k=1}^n \E\left[e^{2 X_T^k} \right]\right)^{1/2} \\ & = \sqrt{n} \E\left[e^{2 X_T} \right]^{1/2}. \end{aligned}
	\]
	Combining this with the previous estimates \eqref{eq:auxEq50}-\eqref{eq:auxEq1}  and the hypothesis on the Lipschitz-constant of the neural network \eqref{eq:NNlipschitz} we obtain that 
	\begin{equation}\label{eq:auxEq2}
	\E\left[\sup_{s \in [a,b]} \left| \E[\Rl(\phi)(se^{X_T})] - \frac{1}{n} \sum_{k=1}^n \Rl(\phi)(se^{X_T^k}) \right| \right]  \leq \frac{c_2}{\sqrt{n}}
	\end{equation}
	with $c_2 = 4 b c \E[e^{2 X_T}]^{1/2}$, which is finite again due to the existence of exponential moments \eqref{eq:expMoment2}.
	
	In a third step we can now apply Markov's inequality for the first estimate and then insert \eqref{eq:auxEq2} to estimate
	\begin{align}\nonumber
	\P & \left[ \sup_{s \in [a,b]} \left| \E[\Rl(\phi)(se^{X_T})] - \frac{1}{n} \sum_{k=1}^n \Rl(\phi)(se^{X_T^k}) \right| \geq \frac{3 c_2}{2\sqrt{n}} \right] \\ \nonumber & \leq \frac{2\sqrt{n}}{3 c_2} \E\left[\sup_{s \in [a,b]} \left| \E[\Rl(\phi)(se^{X_T})] - \frac{1}{n} \sum_{k=1}^n \Rl(\phi)(se^{X_T^k}) \right| \right]
	\\ & \leq \frac{2}{3}.
	\label{eq:auxEq14} \end{align}
	This proves in particular that
	\begin{equation*}
	\P \left[ \sup_{s \in [a,b]} \left| \E[\Rl(\phi)(se^{X_T})] - \frac{1}{n} \sum_{k=1}^n \Rl(\phi)(se^{X_T^k}) \right| \leq \frac{2 c_2}{\sqrt{n}} \right] > 0
	\;.
	\end{equation*}
	Therefore (as $A \in \Fc$ with $\P[A]>0$ necessarily needs to satisfy $A \neq \emptyset$)
	there exists $\omega \in \Omega$ with
	\begin{equation}\label{eq:auxEq16}
	\sup_{s \in [a,b]} 
	\left| \E[\Rl(\phi)(se^{X_T})] - \frac{1}{n} 
	\sum_{k=1}^n \Rl(\phi)(se^{X_T^k(\omega)}) \right| 
	\leq \frac{2 c_2}{\sqrt{n}}.
	\end{equation}
	Lemma~\ref{lem:averageNN} proves that 
	$s \mapsto \frac{1}{n} \sum_{k=1}^n \Rl(\phi)(se^{X_T^k(\omega)}) $ 
	is itself the realization of a neural network $\tilde{\psi}$ with $M(\tilde{\psi}) \leq n M(\phi)$ and hence we have proved the existence of a neural network $\tilde{\psi}$ with
	\begin{equation}\label{eq:auxEq3}
	\sup_{s \in [a,b]} \left| \E[\Rl(\phi)(se^{X_T})] -  \Rl(\tilde{\psi})(s) \right| \leq \frac{2 c_2}{\sqrt{n}}.
	\end{equation}	
	The final step consists in selecting $\bar{\varepsilon} = \varepsilon (c_1+1)^{-1}$, choosing $n = \lceil(2 c_2 \bar{\varepsilon}^{-1})^2 \rceil$, setting $\psi_{\varepsilon} = \tilde{\psi}$, noting (with $\kappa = c(1+4 c_2^2)(c_1+1)^{2+q}$)
	\[
	M(\psi_{\varepsilon}) = M(\tilde{\psi}) \leq n M(\phi) \leq (1+(2 c_2 \bar{\varepsilon}^{-1})^2) c \bar{\varepsilon}^{-q} \leq c(1+4 c_2^2) \bar{\varepsilon}^{-2-q} = \kappa \varepsilon^{-2-q}
	\]
	and combining \eqref{eq:auxEq3} with \eqref{eq:auxEq4} to estimate
	\[\begin{aligned}
	\sup_{s \in [a,b]} &|u(T,s) - \Rl({\psi}_{\varepsilon})(s)| \\ & \leq \sup_{s \in [a,b]} |u(T,s) -  \E[\Rl(\phi)(se^{X_T})]| + \sup_{s \in [a,b]} | \E[\Rl(\phi)(se^{X_T})]- \Rl(\tilde{\psi})(s)|
	\\ & \leq \bar{\varepsilon} (c_1+1) = \varepsilon.
	\end{aligned}\] 
\end{proof}

\begin{proposition} \label{prop:1dresult2}
	Consider the setting of Proposition~\ref{prop:1dresult}, 
	but instead of \eqref{eq:NNlipschitz} 
	assume that $\Rl(\phi_{\varepsilon})$ is $C^1$ and 
	there is a constant $c>0$ such that for every $s\in (0,\infty)$ holds
	\begin{equation} \label{eq:NNgrowth} 
	|\Rl(\phi_{\varepsilon})'(s)| \leq c.
	\end{equation}
	Then the assertion of Proposition~\ref{prop:1dresult} remains valid.
\end{proposition}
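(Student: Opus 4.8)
The plan is to reuse the proof of Proposition~\ref{prop:1dresult} almost word for word, after observing that the Lipschitz hypothesis \eqref{eq:NNlipschitz} entered that argument in only two places: first, in the opening estimate that bounds $|\Rl(\phi)(s)-\Rl(\phi)(0)|$ by $c|s|$ and hence shows that $\varphi$ grows at most linearly (leading to \eqref{eq:auxEq4}); and second, in the symmetrization step \eqref{eq:auxEq50}--\eqref{eq:auxEq2}, where the Ledoux--Talagrand contraction principle trades the function $f=\Rl(\phi)-\Rl(\phi)(0)$ for the identity at the cost of the factor $\mathrm{Lip}(\Rl(\phi))$. Both usages require nothing more than a Lipschitz bound on $\Rl(\phi)$ over $(0,\infty)$; everything else in the proof (the finiteness of $\E[e^{2X_T}]$ from \eqref{eq:expMoment}, the Markov-inequality extraction of a favourable realization $\omega$, and the final weight count $M(\psi_\varepsilon)\le nM(\phi)\le\kappa\varepsilon^{-2-q}$ via Lemma~\ref{lem:averageNN}) is untouched by the change of hypothesis.

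First I would supply that Lipschitz bound from \eqref{eq:NNgrowth}. Since $\Rl(\phi_\varepsilon)$ is $C^1$ on $(0,\infty)$ with $|\Rl(\phi_\varepsilon)'(s)|\le c$ there, the fundamental theorem of calculus gives $|\Rl(\phi_\varepsilon)(s)-\Rl(\phi_\varepsilon)(t)|\le c|s-t|$ for all $s,t\in(0,\infty)$, and letting $t\to0^+$ and using continuity of the realization on $\R$ also yields $|\Rl(\phi_\varepsilon)(s)-\Rl(\phi_\varepsilon)(0)|\le c|s|$. Because every argument $se^{X_T^k}$ occurring in the proof lies in $(0,\infty)$ (as $a>0$), this is exactly the control \eqref{eq:NNlipschitz} needed, and Proposition~\ref{prop:1dresult} then applies verbatim with the same constants. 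Alternatively, following the remark that precedes the statement, one may bypass the contraction principle entirely and bound the expected uniform Monte-Carlo deviation \eqref{eq:auxEq2} by invoking \cite[Lemma~2.16]{Gonon2019}, whose hypotheses are cast in terms of a pointwise derivative bound of the type \eqref{eq:NNgrowth} rather than a global Lipschitz constant.

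I expect the genuine content here to be slight, since the derivative bound is essentially just a device for producing the Lipschitz estimate; accordingly the only real care is bookkeeping. For the second route one must check that the hypotheses of \cite[Lemma~2.16]{Gonon2019} hold for the integrand $s\mapsto\Rl(\phi)(se^{X_T})$, i.e.\ that differentiating in $s$ and combining $|\Rl(\phi)'|\le c$ with the chain-rule factor $e^{X_T}$ yields a quantity controlled by the exponential moments from \eqref{eq:expMoment}, so that the resulting constant is again finite and of the form $c_2/\sqrt{n}$ with $c_2$ depending only on $b$, $c$, and $\E[e^{2X_T}]^{1/2}$. Measurability of the suprema over $s\in[a,b]$ is handled, as in Proposition~\ref{prop:1dresult}, by restricting to $s\in[a,b]\cap\mathbbm{Q}$ and invoking continuity. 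With \eqref{eq:auxEq2} re-established, the choices $\bar{\varepsilon}=\varepsilon(c_1+1)^{-1}$ and $n=\lceil(2c_2\bar{\varepsilon}^{-1})^2\rceil$ close the argument exactly as before.
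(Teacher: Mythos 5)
Your proposal is correct and matches the paper's treatment: the paper likewise observes that the result is a corollary of Proposition~\ref{prop:1dresult} (the derivative bound supplying the needed Lipschitz control) and then writes out exactly your second route, re-establishing \eqref{eq:auxEq2} via \cite[Lemma~2.16]{Gonon2019} applied to the random fields $\xi_k(s,\omega)=\Rl(\phi)(se^{X_T^k(\omega)})$, with a modified constant $c_2$ depending on $b-a$, $c$, $|\varphi(0)|$ and $\E[e^{2X_T}]^{1/2}$, after which the final steps are unchanged. Both of your routes are sound, and your bookkeeping (linear growth of $\Rl(\phi_\varepsilon)$, finiteness of the exponential moments, measurability of the suprema via $[a,b]\cap\IQ$) agrees with the paper's.
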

\begin{proof}
	This result is a corollary of Proposition~\ref{prop:1dresult}. For the ease of the reader we provide an alternative proof. 
	First, let us verify that \eqref{eq:NNgrowth} and \eqref{eq:NNclose} yield a linear growth condition for 
	$\Rl(\phi_{\varepsilon})$. 
	Indeed, we may use the triangle inequality to estimate for any $\varepsilon \in (0,1]$, $s \in (0,\infty)$, 
	\begin{align}\nonumber
	|\Rl(\phi_{\varepsilon})(s)| & \leq |\Rl(\phi_{\varepsilon})(s)-\Rl(\phi_{\varepsilon})(0)| + |\Rl(\phi_{\varepsilon})(0)-\varphi(0)| + |\varphi(0)| \\ &  \label{eq:NNlineargrowth} \leq \max\{c,|\varphi(0)|\} (1+|s|).
	\end{align}
	Now the same proof as for Proposition~\ref{prop:1dresult} applies, 
	only the second step needs to be adapted. 
	In other words, 
	we prove the estimate \eqref{eq:auxEq2} with a different constant 
	$c_2$ by using a different technique.
	
	To do this, again we let $X^1,\ldots,X^n$ denote $n$ i.i.d.\ copies of $X$. 
	Applying Lemma~2.16 in \cite{Gonon2019} 
	(with random fields $\xi_k(s,\omega)=\Rl(\phi)(se^{X_T^k(\omega)})$, $k=1,\ldots,n$, 
	which satisfy the hypotheses of Lemma~2.16 in \cite{Gonon2019} 
	thanks to \eqref{eq:expMoment2} and \eqref{eq:NNgrowth}) in the first inequality and 
	using \eqref{eq:NNgrowth} and \eqref{eq:NNlineargrowth} for the second inequality then proves that
	\begin{equation*} \begin{aligned} 
	\E&\left[\sup_{s \in [a,b]} \left| \E[\Rl(\phi)(se^{X_T})] - \frac{1}{n} \sum_{k=1}^n \Rl(\phi)(se^{X_T^k}) \right| \right] \\ &  \leq \frac{32 \sqrt{e}}{\sqrt{n}} \sup_{s \in [a,b]} \left( \E[|\Rl(\phi)(se^{X_T})|^2]^{1/2} +(b-a) \E[|\Rl(\phi)'(se^{X_T})e^{X_T}|^2]^{1/2} \right) \\
	&  \leq \frac{32 \max\{c,|\varphi(0)|\} \sqrt{e}}{\sqrt{n}}  \left( 1+b\E[e^{2 X_T}]^{1/2} +(b-a) \E[e^{2 X_T}]^{1/2} \right),
	\end{aligned}\end{equation*}
	which is a bound as in \eqref{eq:auxEq2} with constant 
	\[c_2  = 32 \max\{c,|\varphi(0)|\} \sqrt{e} \left( 1+b\E[e^{2 X_T}]^{1/2} +(b-a) \E[e^{2 X_T}]^{1/2} \right). \]
\end{proof}

\begin{remark}\label{rem:architecture}
	The architecture of the neural network approximations constructed 
	using probabilistic arguments in Proposition~\ref{prop:1dresult}, 
	Proposition~\ref{prop:1dresult2} and also Theorem~\ref{prop:Ddresult} 
	ahead differ from architectures obtained by analytic arguments, 
	see Proposition~\ref{prop:Holom} and Theorem~\ref{thm:DNNExprd} ahead. 
	While the neural networks in the latter results are deep in any situation, 
	the architecture of the neural networks in the former situation depends heavily 
	on the architecture of the neural network $\phi_{\varepsilon}$ 
	used to approximate the payoff function $\varphi$. 
	Therefore, in certain simple situations, 
	the approximating neural network $\psi_{\varepsilon}$ may be a shallow neural network, 
	that is, a neural network with only $L=2$ layers.
	E.g., by \eqref{eq:auxEq4} or \eqref{eq:optionPrice} the function $\varphi$ is specified in the 
	variable $s>0$, and not in log-return variable $x$. 
	This implies, e.g., for a plain-vanilla European call 
	that $\varphi(s) = (s-K)^+$ must be emulated by a ReLU NN, 
	which can be done using the simple $2$-layer neural network
	$\phi_0 = ((1,-K),(1,0))$, that is, $\Rl(\phi_0)=\varphi$.
\end{remark}
\subsection{DNN expression of European calls}
\label{sec:DNNCall}
In this section we illustrate how the results of Proposition~\ref{prop:1dresult} 
can be used to bound 
DNN expression rates of call options on exponential L\'evy models.

Suppose we observe call option prices for a fixed maturity $T$ and $N$ different 
strikes $K_1,\ldots,K_N>0$. 
Denote these prices by $\hat{C}(T,K_1),\ldots,\hat{C}(T,K_N)$. 
A task frequently encountered in practice is 
to extrapolate from these prices to prices 
corresponding to unobserved maturities or to learn a non-parametric option pricing function.
A widely used approach is to solve 
\begin{equation}
\label{eq:nonparametricPricing}
\min_{\phi \in \mathcal{H}} 
\frac{1}{N} 
\sum_{i=1}^N \left(\frac{\hat{C}(T,K_i)}{K_i}-\phi(S_0/K_i)\right)^2\;.
\end{equation}
Here $\mathcal{H}$ is a suitable collection of (realizations of) neural networks, 
for example all networks with an a-priori fixed architecture. 
In fact, many of the papers listed in the recent review Ruf and Wang \cite{Ruf2020} 
use this approach or a variation of it, where 
for example an absolute value is inserted instead of a square 
or $\hat{C}(T,K_i)/K_i$ is replaced by $\hat{C}(T,K_i)$ and $S/K_i$ by $K_i$. 

In this section we assume that the observed call prices are generated from an (assumed unknown) 
exponential L\'evy model and $\mathcal{H}$ consists of ReLU networks.
Then we show that 
the error in \eqref{eq:nonparametricPricing} can be controlled and 
we can give bounds on the number of non-zero parameters of the minimizing neural network.  
The following result is a direct consequence of Proposition~\ref{prop:1dresult}. 
It shows that $\cO(\varepsilon^{-1})$ weights suffice to achieve an error of 
at most $\varepsilon$ in \eqref{eq:nonparametricPricing}.

\begin{proposition}\label{prop:learningCallPrices} Assume that 
	\[
	\hat{C}(T,K_i) = \E[(S_T-K_i)^+], \text{ for }i=1,\ldots,N,
	\]
	with $S_T = S_0 \exp(X_T)$ and $X$ an (unknown) L\'evy process satisfying \eqref{eq:expMoment}. 
	For any $\kappa >0$, $\varepsilon \in (0,1]$ we let $\mathcal{H}_{\kappa,\varepsilon}$ 
	denote the set of all (realizations of) neural networks with at most $\kappa \varepsilon^{-1}$ non-zero weights
	and choose $\varrho(x)=\max\{x,0\}$ as activation function.
	Then there exists $\kappa \in (0,\infty)$ such that for all $\varepsilon \in (0,1]$
	\[\min_{\phi \in \mathcal{H}_{\kappa,\varepsilon}} \frac{1}{N} \sum_{i=1}^N \left(\frac{\hat{C}(T,K_i)}{K_i}-\phi(S_0/K_i)\right)^2  \leq \varepsilon. \]
\end{proposition}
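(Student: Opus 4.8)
The plan is to derive this as a direct corollary of Proposition~\ref{prop:1dresult}. The key observation is that the quantity $\hat{C}(T,K_i)/K_i = \E[(S_T-K_i)^+]/K_i$ is exactly an option price of the type studied in Proposition~\ref{prop:1dresult}, evaluated at the point $s = S_0/K_i$. Indeed, by scaling, $\E[(S_T - K_i)^+]/K_i = \E[(S_0 e^{X_T}/K_i - 1)^+] = \E[(s\, e^{X_T} - 1)^+]$ with $s = S_0/K_i$, so that $\hat{C}(T,K_i)/K_i = u(T, S_0/K_i)$ where $u$ is the option price associated to the normalized payoff $\varphi(s) = (s-1)^+$. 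Thus the target function being fitted in \eqref{eq:nonparametricPricing} is precisely the option-price function $u(T,\cdot)$ for a unit-strike call.

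First I would verify that the normalized payoff $\varphi(s) = (s-1)^+$ satisfies the hypotheses \eqref{eq:NNclose}--\eqref{eq:NNlipschitz} of Proposition~\ref{prop:1dresult} with $q=0$. As noted in Remark~\ref{rmk:q=0}, the ReLU activation function $\varrho(x)=\max\{x,0\}$ lets one represent $\varphi(s)=(s-1)^+$ \emph{exactly} by the $2$-layer network $\phi=((1,-1),(1,0))$, i.e.\ $\Rl(\phi)=\varphi$. Hence one takes $\phi_\varepsilon = \phi$ for all $\varepsilon$, giving error zero in \eqref{eq:NNclose}, a fixed size $M(\phi)$ independent of $\varepsilon$ (so $q=0$), and $\mathrm{Lip}(\Rl(\phi))=1$, which bounds \eqref{eq:NNlipschitz}. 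Since $X$ satisfies the moment condition \eqref{eq:expMoment}, all assumptions of Proposition~\ref{prop:1dresult} hold.

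Next I would apply Proposition~\ref{prop:1dresult} with $q=0$ on an interval $[a,b]$ chosen to contain all the evaluation points. Since the strikes $K_1,\ldots,K_N$ are fixed and positive, the finitely many points $S_0/K_i$ lie in some compact interval $[a,b]\subset(0,\infty)$; I would fix such an interval, e.g.\ $a=\min_i S_0/K_i$ and $b=\max_i S_0/K_i$. The proposition then furnishes, for each $\varepsilon\in(0,1]$, a neural network $\psi_\varepsilon$ with $M(\psi_\varepsilon)\leq \kappa\,\varepsilon^{-2}$ (the exponent is $-2-q = -2$) and $\sup_{s\in[a,b]}|u(T,s)-\Rl(\psi_\varepsilon)(s)|\leq\varepsilon$.

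It remains to reconcile the two accuracy/size bookkeepings: the statement asks for $\cO(\varepsilon^{-1})$ weights achieving mean-squared error at most $\varepsilon$, whereas the proposition delivers $\cO(\bar\varepsilon^{-2})$ weights for uniform error $\bar\varepsilon$. The reconciliation is a simple change of variable: applying the proposition with target uniform accuracy $\bar\varepsilon = \sqrt{\varepsilon}$ produces a network $\psi$ with $M(\psi)\leq\kappa\,\bar\varepsilon^{-2}=\kappa\,\varepsilon^{-1}$ and uniform error at most $\sqrt{\varepsilon}$ on $[a,b]$. Since each $S_0/K_i\in[a,b]$, the pointwise squared errors satisfy $(u(T,S_0/K_i)-\Rl(\psi)(S_0/K_i))^2\leq\varepsilon$, hence the empirical mean in \eqref{eq:nonparametricPricing} is bounded by $\varepsilon$ as well. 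Because $M(\psi)\leq\kappa\varepsilon^{-1}$, the realization $\Rl(\psi)$ belongs to $\mathcal{H}_{\kappa,\varepsilon}$, so the minimum over $\mathcal{H}_{\kappa,\varepsilon}$ is at most $\varepsilon$, which is the claim. The only genuinely delicate point — and the one I would present carefully — is the identification $\hat{C}(T,K_i)/K_i = u(T,S_0/K_i)$ via the scaling of the call payoff; everything else is a mechanical invocation of Proposition~\ref{prop:1dresult} together with the exact ReLU representation of the unit-strike call.
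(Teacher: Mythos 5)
Your proposal is correct and follows essentially the same route as the paper's own proof: the identification $\hat{C}(T,K_i)/K_i = u(T,S_0/K_i)$ for the unit-strike call, the exact ReLU representation $\phi_0=((1,-1),(1,0))$ giving $q=0$ in Proposition~\ref{prop:1dresult}, the interval $[a,b]$ spanned by the points $S_0/K_i$, and the substitution $\bar\varepsilon=\sqrt{\varepsilon}$ (the paper's $\varepsilon=\delta^2$) converting the uniform bound with $\cO(\bar\varepsilon^{-2})$ weights into a mean-squared bound with $\cO(\varepsilon^{-1})$ weights. No gaps.
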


\begin{proof}
	First, 
	choose the interval $[a,b]$ by setting $a = \min \{S_0/K_1,\ldots,S_0/K_N\}$ 
	and $b=\max \{ S_0/K_1, $ $\ldots, S_0/K_N\}$. 
	We note that the function $\varphi(s)=(s-1)^+$ can be represented by the $2$-layer neural network
	$\phi_0 = ((1,-1),(1,0))$, that is, $\Rl(\phi_0)=\varphi$. Thus, Proposition~\ref{prop:1dresult} 
	can be applied (with $\phi_{\varepsilon}=\phi_0$ for all $\varepsilon \in (0,1]$ and $q=0$, $c=3$)
	and so there exists $\kappa \in [3,\infty)$ 
	and neural networks $\psi_{\delta}$, $\delta \in (0,1]$,  
	such that for any  $\delta \in (0,1]$ 
	we have
	$M(\psi_{\delta}) \leq \kappa \delta^{-2}$ 
	and 
	\[
	\sup_{s \in [a,b]} |u(T,s) - \Rl(\psi_{\delta})(s)| \leq \delta
	\]
	with $u(T,s)=\E[(s e^{X_T}-1)^+]$.
	Therefore, 
	\[\begin{aligned}
	\frac{1}{N} &  \sum_{i=1}^N \left(\frac{\hat{C}(T,K_i)}{K_i}-\Rl(\psi_{\delta})(S_0/K_i)\right)^2 \\ &  = \frac{1}{N} \sum_{i=1}^N \big(u(T,S_0/K_i)-\Rl(\psi_{\delta})(S_0/K_i)\big)^2 
	\leq  \delta^2.
	\end{aligned}
	\]
	Setting $\varepsilon = \delta^2$ and noting $\Rl(\psi_{\delta}) \in \mathcal{H}_{\kappa,\varepsilon}$ then finishes the proof.
\end{proof}

\begin{remark}
	The proof shows that $\kappa$ is independent of $N$. This can also be seen by observing
	that the result directly generalizes to an infinite number of call options with strikes in a compact interval $\mathcal{K} = [\underline{K},\overline{K}]$ with $ \underline{K}> 0$, $ \overline{K}< \infty$. 
	Indeed, let $\mu$ be a probability measure on $(\mathcal{K},\mathcal{B}(\mathcal{K}))$, 
	then choosing $\psi_{\delta}$, $\delta = \varepsilon^2$ as in the proof of 
	Proposition~\ref{prop:learningCallPrices} and 
	$a = S_0/\overline{K}$, $b= S_0 / \underline{K}$  yields $\Rl(\psi_{\delta}) \in \mathcal{H}_{\kappa,\varepsilon}$ 
	and 
	\[ \begin{aligned}
	& \int_{\mathcal{K}}  
	\left(\frac{\hat{C}(T,K)}{K}-\Rl(\psi_{\delta})(S_0/K)\right)^2 \mu(d K) 
	\\  = &
	\int_{\mathcal{K}}  \big(u(T,S_0/K)-\Rl(\psi_{\delta})(S_0/K)\big)^2 \mu(d K) 
	\leq 
	\varepsilon.
	\end{aligned} \]
\end{remark}

\subsection{ReLU DNN exponential expressivity}
\label{sec:exp}
We now develop a second argument for bounding the expressivity
of ReLU DNNs for the option price $u(\tau,s)$ solution of \eqref{eq:PIDE},
subject to the initial condition $u(0,s) = \varphi(s)$. 
In particular, in this subsection we choose $\varrho$ given by $\varrho(x)=\max\{x,0\}$ as activation function.

As in the preceding, probabilistic argument, we consider 
the DNN expression error in a bounded interval $[a,b]$ with $0<a<s<b<\infty$. 
The second argument is based on \emph{parabolic smoothing} of the 
linear, parabolic PIDE \eqref{eq:PIDE}.
This, in turn, ensures smoothness of $s\mapsto u(\tau,s)$ 
at positive times $\tau>0$, i.e.\ 
smoothness in the ``spatial'' variables $s\in [a,b]$ 
resp.\ in the log-return variable $x=\log(s)\in[\log(a),\log(b)]$,
even for non-smooth payoff functions $\varphi$ 
(so, in particular, binary options with discontinuous payoffs $\varphi$ 
are admissible, albeit 
at the cost of non-uniformity of derivative bounds at $\tau \downarrow 0$).
It is a classical result that
this implies spectral, possibly exponential convergence of 
\emph{polynomial approximations} of 
$u(\tau,\cdot)|_{[a,b]}$ in $L^\infty([a,b])$.
As we observed in Opschoor et al.\ \cite[Section 3.2]{OSZ19_839},
this exponential polynomial convergence rate
implies also exponential expressivity of ReLU DNNs of 
$u(\tau,\cdot)|_{[a,b]}$ in $L^\infty([a,b])$ for any $\tau>0$. 

To ensure smoothing properties of the solution operator of the
PIDE, we require additional assumptions 
(see \eqref{eq:StrEllPsi} below) on the L\'evy triplet $(\sigma^2,\gamma,\nu)$.
To formulate these, 
we recall the L\'evy symbol $\psi$ of the $\R$-valued LP $X$
\begin{equation}
\psi(\xi) 
= \frac{\sigma^2}{2} \xi^2 - i\gamma \xi
- \int_{\R} 
\left(e^{i \xi x}-1- i \xi x \mathbbm{1}_{\{|x|\leq 1\}} \right)\nu(dx), \quad \xi \in \R 
\;.
\label{eq:LK2}
\end{equation}
\begin{proposition}\label{prop:Holom}
	Suppose that the symbol $\psi$ of the LP $X$ is such that 
	there exists $\rho \in (0,1]$ and constants $C_i > 0$, 
	$i=1,2,3$ such that for all $\xi \in \IR$ holds
	\begin{equation}\label{eq:StrEllPsi}
	\Re \psi(\xi) \geq C_1 | \xi |^{2\rho}, 
	\quad 
	| \psi(\xi) | \leq C_2 | \xi |^{2\rho} + C_3 \;.
	\end{equation}
	Then, for every $v_0$ such that
	$v_0 = \varphi \circ \exp \in L^2(\IR)$, 
	for every $0< \tau \leq T < \infty$, 
	for every $0<a<b<\infty$, 
	and for every $0<\eps<1/2$ 
	exist neural networks $\psi^{u}_\eps$ 
	which express the solution 
	$u(\tau, \cdot)|_{[a,b]}$ to accuracy $\eps$, i.e.,
	$$
	\sup_{s \in [a,b]}| u(\tau, s) - \Rl(\psi^{u}_\eps)(s) | 
	\leq \eps\;.
	$$
	Furthermore, 
	there exists a constant $C' > 0$ such that 
	with $\delta = \frac{1}{\min\{ 1, 2 \rho \}} \geq 1$ holds
	\[
	M(\psi^{u}_\eps) 
	\leq C' 
	|\log(\eps)|^{2\delta} 
	\;, \;\;\;\; 
	L(\psi^{u}_\eps) \leq C' |\log(\eps)|^{\delta} |\log(|\log(\eps)|)|
	\;.
	\]
\end{proposition}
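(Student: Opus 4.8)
The plan is to establish the claim in three stages: first, deduce analytic (Gevrey-type) regularity of $u(\tau,\cdot)$ from the smoothing hypothesis \eqref{eq:StrEllPsi}; second, convert this regularity into an exponential convergence rate of polynomial approximation on $[a,b]$; and third, invoke the ReLU emulation of polynomials to build the network $\psi^u_\eps$ and count its size and depth. First I would work in the log-return variable $x = \log s$, setting $v(\tau,x) = u(\tau,e^x)$ and $v_0(x) = \varphi(e^x)$. In these variables the generator of the L\'evy semigroup is a Fourier multiplier with symbol $-\psi(\xi)$, so that the solution is represented by $\hat{v}(\tau,\xi) = e^{-\tau \psi(\xi)} \hat{v}_0(\xi)$. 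The lower bound $\Re\psi(\xi) \geq C_1 |\xi|^{2\rho}$ forces $|e^{-\tau\psi(\xi)}| \leq e^{-C_1 \tau |\xi|^{2\rho}}$, which provides super-polynomial decay of $\hat{v}(\tau,\cdot)$ for every fixed $\tau>0$.

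The key quantitative step is to bound the spatial derivatives of $v(\tau,\cdot)$. I would estimate
\[
\| \partial_x^k v(\tau,\cdot) \|_{L^\infty} \leq \frac{1}{2\pi} \int_\IR |\xi|^k e^{-C_1 \tau |\xi|^{2\rho}} |\hat{v}_0(\xi)| \, d\xi,
\]
and after applying Cauchy--Schwarz against $\|\hat v_0\|_{L^2} = \|v_0\|_{L^2}$, reduce matters to bounding $\big(\int_\IR |\xi|^{2k} e^{-2 C_1 \tau |\xi|^{2\rho}} d\xi\big)^{1/2}$. The substitution $t = 2 C_1 \tau |\xi|^{2\rho}$ turns this into a Gamma-function expression, yielding a bound of the form $\| \partial_x^k v(\tau,\cdot) \|_{L^\infty} \leq C (A/\tau)^{k \delta}\, (k!)^{\delta}$ with $\delta = \frac{1}{\min\{1,2\rho\}}$; this is precisely Gevrey-$\delta$ regularity with explicit dependence of the constant on $\tau$. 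I would then transfer these bounds back to the $s$-variable on the compact interval $[a,b]$ via the chain rule (the map $x \mapsto e^x$ and its inverse are analytic with controlled derivatives on $[\log a, \log b]$), preserving the Gevrey class up to adjusting constants. Such derivative bounds imply, by a classical Bernstein/Chebyshev-truncation argument (e.g.\ truncating the Chebyshev expansion of $u(\tau,\cdot)$ at degree $N$), that there is a polynomial $p_N$ with $\sup_{[a,b]} |u(\tau,\cdot) - p_N| \leq C'' \exp(-\beta N^{1/\delta})$ for some $\beta>0$. Solving for the degree needed to reach accuracy $\eps/2$ gives $N \sim |\log\eps|^{\delta}$.

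Finally, I would invoke the ReLU emulation of polynomials from Opschoor et al.\ \cite[Section 3.2]{OSZ19_839} (as cited in the text preceding the statement): a degree-$N$ univariate polynomial on $[a,b]$ is expressed to accuracy $\eps/2$ by a ReLU network of size $\cO(N \,|\log(\eps/N)|)$ and depth $\cO(|\log(\eps/N)| \log N)$. Substituting $N \sim |\log\eps|^{\delta}$ produces $M(\psi^u_\eps) \leq C' |\log\eps|^{2\delta}$ and $L(\psi^u_\eps) \leq C' |\log\eps|^{\delta} |\log(|\log\eps|)|$, matching the stated bounds; the two $\eps/2$ errors combine to give total error at most $\eps$. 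The main obstacle is the second stage: obtaining the Gevrey derivative bounds with the \emph{correct} exponent $\delta$ and the \emph{correct} degree-to-accuracy relation $\exp(-\beta N^{1/\delta})$, since a crude estimate gives the wrong power of $|\log\eps|$. This requires carefully optimizing the Gamma-function asymptotics and then matching them to the sub-exponential (rather than geometric) polynomial convergence rate characteristic of Gevrey — rather than merely analytic — classes. The dependence of the constants on $\tau$ (blowing up as $\tau \downarrow 0$) and on the interval $[a,b]$ must also be tracked, though these are absorbed into $C'$ for fixed $\tau, a, b$.
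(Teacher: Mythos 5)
Your proposal is correct and takes essentially the same route as the paper's proof: pass to log-variables, use the Fourier multiplier representation $\hat v(\tau,\xi)=e^{-\tau\psi(\xi)}\hat v_0(\xi)$ with the lower bound in \eqref{eq:StrEllPsi} to derive derivative bounds implying Gevrey-$\delta$ regularity with $\delta = 1/\min\{1,2\rho\}$, transfer to the $s$-variable under the analytic change of coordinates, and combine exponential polynomial approximation of Gevrey functions with ReLU emulation from Opschoor et al.\ \cite{OSZ19_839}. Your only deviation is cosmetic: you bound $\|\partial_x^k v(\tau,\cdot)\|_{L^\infty}$ directly via Fourier inversion and Cauchy--Schwarz (a variant the paper itself uses in Remark~\ref{rmk:Crseofd-2}), whereas the proof in the paper argues via Parseval in $L^2$ plus Sobolev embedding, and your explicit Chebyshev-truncation bookkeeping is simply the unpacked content of \cite[Proposition~4.1]{OSZ19_839}, yielding bounds that are dominated by (indeed slightly sharper than) the stated $|\log\eps|^{2\delta}$ and $|\log\eps|^{\delta}|\log(|\log\eps|)|$ since $\delta\geq 1$.
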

\begin{remark}
	A sufficient condition on the L\'evy triplet which ensures \eqref{eq:StrEllPsi} is
	as follows.
	Let $X$ be a L\'evy process with characteristic triplet
	$(\sigma^2,\gamma,\nu)$ and L\'evy density $k(z)$ where 
	$\nu(d z) = k(z)d z$ satisfies
	\begin{enumerate}
		\item There are constants $\beta_- >0$, $\beta_+ >1$ and $C > 0$ such that
		\begin{equation*} \label{eq:semiheavy}
		k(z) \le C \begin{cases} e^{-\beta_- \abs{z}},  & \mbox{$z<-1$,}\\
		e^{-\beta_+ z}, & \mbox{$z>1$.}
		\end{cases}
		\end{equation*}
		\item 
		Furthermore, there exist constants $0<\alpha<2$ and $C_+ > 0$ such that
		\begin{equation*} \label{eq:uppbound}
		k(z)  \le C_+ \frac{1}{\abs{z}^{1+\alpha}}, \quad 0<\abs{z}<1.
		\end{equation*}
		\item If $\sigma = 0$, we assume additionally that there is a $C_- > 0$ such that
		\begin{equation*} \label{eq:lowbound}
		\frac{1}{2}\big(k(z)+k(-z)\big) \ge C_- \frac{1}{\abs{z}^{1+\alpha}}, \quad 0<\abs{z}<1.
		\end{equation*}
	\end{enumerate}
	Then \eqref{eq:StrEllPsi} is satisfied  (see \cite[Lemma 10.4.2]{CMQFBook}).
	Here, $\rho = 1$ if $\sigma > 0$ 
	and otherwise 
	$\rho = \alpha/2$. 
\end{remark}
\begin{proof}
	The proof proceeds in several steps: 
	first, we apply the change of variables $x=\log(s) \in \IR$ 
	in order to 
	leverage the stationarity of the LP $X$ for obtaining 
	a constant coefficient Kolmogorov PIDE. 
	Assumptions \eqref{eq:StrEllPsi} then ensure well-posedness
	of the PIDE in a suitable variational framework.
	We then exploit that
	stationarity of the LP $X$ facilitates the use of 
	Fourier transformation;
	the lower bound on $\psi$ in \eqref{eq:StrEllPsi} 
	will allow to derive sharp, explicit 
	bounds on high spatial derivatives
	of (variational) solutions of the PIDE 
	which imply Gevrey regularity of these solutions
	on bounded intervals $[a,b]\subset (0,\infty)$.
	We recall that for $\delta \geq 1$, 
	a smooth function $x\mapsto f(x)$ is Gevrey-$\delta$ regular in an open subset 
	$D\subset \IR^d$ if $f\in C^\infty(D)$ and if
	for every compact set $\kappa \subset\subset D$ exists $C_\kappa > 0$ 
	such that for all $\alpha\in \IN_0^d$ and for every $x\in \kappa$ 
	holds $|D_x^\alpha f(x)| \leq C_\kappa^{|\alpha|+1} (\alpha !)^\delta$.
	Note that $\delta=1$ implies that $f$ is real analytic in $\kappa$.
	We refer to Rodino \cite[Section 1.4]{Rodino} for details, 
	examples and further references. 
	
	Gevrey regularity, in turn, implies exponential rates of convergence
	of polynomial and deep ReLU NN approximations of 
	$s\mapsto u(\tau,s)$ for $\tau>0$ whence we obtain the assertion of the theorem.
	
	We change coordinates to 
	$x = \log(s) \in (-\infty,\infty)$ so that $v(\tau,x)=u(\tau,e^x)$.
	Then, 
	the PIDE \eqref{eq:PIDE} takes the form 
	(e.g.\ Matache et al.\ \cite[Section 3]{MvS04_373}, Lamberton and Mikou \cite[Section 3.1]{LambMik08}) 
	\begin{equation}\label{eq:1}
	\frac{\partial v}{\partial \tau} - \frac{\sigma^2}{2} \frac{\partial^2 v}{\partial x^2}
	- (\gamma + r  )\frac{\partial v}{\partial x}
	+ A[v] + r v = 0\quad\mbox{in }(0, T)\times \IR
	\end{equation}
	where
	$A$ denotes the integrodifferential operator
	\[
	A[f](x) = -
	\int_{\IR} \left( f(x+y) - f(x) - y f '(x) \mathbbm{1}_{\{|y|\leq 1\}} \right) \; \nu(d y) 
	\]
	together with the initial condition
	\begin{equation}\label{eq:2}
	v|_{\tau = 0} = \varphi(e^x) = (\varphi\circ\exp)(x).
	\end{equation}

	Then $C(t, s)= v(T-t, \ln(s))$ satisfies
	\begin{equation}\label{FeynmannKac}
	C(t, S_t) = \E[e^{r(t-T)} \varphi(S_T)| {\mathcal{F}}_t].
	\end{equation}
	Conversely, if $C(t,s)$ in (\ref{FeynmannKac})
	is sufficiently regular, then $v(\tau, x)  \rev{=} C(T-\tau, e^x)$
	is solution of (\ref{eq:1}), (\ref{eq:2}) 
	(recall that we assume $r=0$ for notational simplicity).
	
	The  L{\'e}vy-Khintchine formula describes
	the $\R$-valued LP $X$ by the 
	log-characteristic function $\psi$ of the RV $X_1$.
	From the time-homogeneity of the LP $X$, 
	\begin{equation}\label{LK1}
	\forall t>0: \qquad \E[e^{i\xi X_t}] = e^{-t\psi(\xi)}\;.
	\end{equation}
	The L\'evy exponent $\psi$ of the LP $X$ 
	admits the explicit representation 
	\eqref{eq:LK2}.
	
	The L\'evy exponent $\psi$ is
	the symbol of the pseudo-differential
	operator $-{\mathcal{L}}$, where ${\mathcal{L}}$ is the infinitesimal
	generator of the semi-group
	of the LP $X$.  ${\mathcal{A}}=-{\mathcal{L}}$
	is the spatial operator in \eqref{eq:1} 
	given by
	\begin{equation}\label{calA}
	{\mathcal{A}}[f](x) 
	= 
	- \frac{\sigma^2}{2} \frac{d^2 f}{dx^2}(x) 
	- \gamma
	\frac{df}{dx}(x)  + A[f](x).
	\end{equation}
	For $f, g \in C^\infty_0(\IR)$ we associate with
	operator ${\mathcal{A}}$ the bilinear form
	\begin{equation*}\label{defbilform}
	a(f, g) = \int_{\IR} {\mathcal{A}}[f](x) g(x) dx.
	\end{equation*}
	The translation invariance of the operator $\mathcal A$ (implied by stationarity of 
	the LP $X$) in \eqref{calA} and Parseval's equality (see \cite[Remark 10.4.1]{CMQFBook})
	imply that $\psi$ is the symbol of $\mathcal A$, i.e.\
	$$
	\forall f,g\in C^\infty_0(\IR): 
	\quad 
	a(f,g) 
	= 
	\int_{\IR} \psi(\xi) \hat{f}(\xi) \overline{\hat{g}(\xi)} d\xi
	\;.
	$$
	The assumption \eqref{eq:StrEllPsi} on $\psi$ 
	implies continuity and coercivity
	of the bilinear form $a(\cdot,\cdot)$ on $H^{\rho/2}(\IR)\times H^{\rho/2}(\IR)$,
	so that for $v_0\in L^2(\IR)$ there exists a unique variational solution  $v\in C([0,T];L^2(\IR))\cap L^2(0,T;H^{\rho/2}(\IR))$
	of the PIDE \eqref{eq:1} with the initial condition \eqref{eq:2}, see, e.g.\ Eberlein and Glau \cite{EberleinGlauVarSlnLevyPIDEs}.
	
	Fix $0<\tau\leq T < \infty$, $x\in \IR$ arbitrary. 
	The variational solution $v$ of \eqref{eq:1}, \eqref{eq:2}
	satisfies
	\[
	\begin{aligned} 
	v(\tau,x) 
	& =
	\frac{1}{\sqrt{2 \pi}} \int_{ \IR} \exp(i x \xi) \hat{v}(\tau,\xi)d\xi 
	\\ & =
	\frac{1}{\sqrt{2 \pi}} \int_{\IR} \exp(i x \xi) \exp\big(-\tau\psi(\xi)\big) \widehat{\varphi \circ \exp}(\xi) d\xi
	\;.
	\end{aligned}
	\]
	For every $k\in \IN_0$,
	Parseval's equality implies with the lower bound in \eqref{eq:StrEllPsi}
	$$
	\begin{array}{rcl}
	\displaystyle
	\int_{\IR} |(D^k_x v)(\tau,x)|^2 dx  
	& = &  \displaystyle 
	\int_{\IR} |\xi|^{2k} \left|\exp\big(-2\tau\psi(\xi)\big)\right| |\widehat{\varphi \circ \exp}(\xi)|^2 d\xi
	\\
	& \leq &  \displaystyle
	\int_{\IR} |\xi|^{2k} \exp(-2\tau C_1 |\xi|^{2\rho}) |\widehat{\varphi \circ \exp}(\xi)|^2 d\xi
	\;.
	\end{array}
	$$
	An elementary calculation shows that 
	for any $m,\kappa,\mu>0$ holds 
	\begin{equation}\label{eq:maxexp}
	\max_{\eta>0} \left\{ \eta^m \exp(-\kappa \eta^\mu) \right\} 
	=
	\left(\frac{m}{\kappa \mu e} \right)^{m/\mu} \;.
	\end{equation}
	We employ \eqref{eq:maxexp} with 
	$m=2k$, $\kappa=2\tau C_1$, $\mu = 2\rho$ and $\eta = |\xi|$ 
	to obtain
	\begin{equation*}
	\| (D^k_xv)(\tau,\cdot) \|_{L^2(\IR)}^2
	\leq 
	\left(\frac{k}{2\tau C_1 \rho e}\right)^{k/\rho} 
	\| v_0 \|_{L^2(\IR)}^2\;.
	\end{equation*}
	Taking square roots and using the (rough) Stirling bound
	$k^k \leq k!e^k$ valid for all $k\in \IN$,
	we obtain %
	\begin{equation}\label{eq:GevEst}
	\forall \tau>0, \forall k\in \IN: \quad
	\| (D^k_xv)(\tau,\cdot) \|_{L^2(\IR)}
	\leq 
	\Bigg( \left(\frac{1}{2\tau C_1\rho}\right)^{\frac{1}{2\rho}} \Bigg)^k (k!)^{\frac{1}{2\rho}} \| v_0 \|_{L^2(\IR)}\;.
	\end{equation}
	This implies, with the Sobolev embedding theorem, that
	for any bounded interval $I = [x_-,x_+]\subset \IR$, $-\infty < x_- < x_+ < \infty$,
	and for every fixed $\tau>0$,
	there exist constants $C= C(x_+, x_-)>0$ and $A(\tau,\rho)>0$ such that
	\begin{equation*}
	\forall k\in \IN: \quad 
	\sup_{x \in I} |  (D^k_xv)(\tau,x) |
	\leq 
	C \big(A(\tau,\rho)\big)^k (k!)^{1/\min\{1, 2\rho\}} \;.
	\end{equation*}
	I.e., 
	$v(\tau,\cdot)|_I$ is Gevrey-$\delta$ regular with $\delta = 1/\min\{1, 2\rho\}$.
	
	To construct the DNNs $\psi^u_\varepsilon$ in the claim, 
	we proceed in several steps: 
	we first use a (analytic, in the bounded interval 
	$I = [x_-,x_+]\subset \IR$) 
	change of variables $s = \exp(x)$ and the 
	fact that Gevrey regularity
	is preserved under analytic changes of variables
	to infer Gevrey-$\delta$ regularity in 
	$[a,b]\subset \IR_{>0}$ of $s\mapsto u(\tau,s)$,
	for every fixed $\tau>0$.
	
	This, in turn, implies the existence of a sequence $\{u_p(s)\}_{p\geq 1}$ 
	of polynomials of degree $p\in \IN$ in $[a,b]$ converging 
	in $W^{1,\infty}([a,b])$ to $u(\tau,\cdot)$ for $\tau>0$
	at rate $\exp(-b'p^{1/\delta})$ 
	for some constant $b'> 0$ depending on $a$, $b$ and on $\delta \geq 1$, 
	but independent of $p$.
	The asserted DNNs are then obtained by 
	approximately expressing the $u_p$ through
	ReLU DNNs, again at exponential rates, 
	with Opschoor et al.\ \cite{OSZ19_839}.
	The details are as follows.
	
	The interval $s\in [a,b]$ in the assertion of the proposition
	corresponds to $x\in [\log(a),\log(b)]$ under the 
	analytic (in the bounded interval $[a,b]$) change of variables $x=\log(s)$. 
	As Gevrey regularity is known to be 
	preserved under analytic changes of variables
	(e.g.\ \cite[Proposition 1.4.6]{Rodino}), 
	also $u(\tau,s)|_{s \in [a,b]}$ is Gevrey-$\delta$ regular, with
	the same index $\delta = 1/ \min\{1, 2\rho\} \geq 1$ and with 
	constants in the derivative bounds which depend on 
	$0 < a < b < \infty$, $\rho \in (0,1]$, $\tau > 0$.
	In particular, 
	for $\rho \geq 1/2$, $u(\tau,s)|_{s \in [a,b]}$ is 
	real analytic in $[a,b]$.
	
	With Gevrey-$\delta$ regularity of $s\mapsto u(\tau,s)$ for $s \in [a,b]$
	established, we may invoke expression rate bounds for deep ReLU NNs 
	for such functions: 
	in Opschoor et al.\ \cite[Proposition 4.1]{OSZ19_839}, it was shown that for such functions
	in space dimension $d=1$ there exist constants $C'>0,\beta'>0$ such that
	for every $\cN \in \IN$ there exists a deep ReLU NN $\tilde{u}_\cN$ with 
	\begin{gather*}
	M(\tilde{u}_\cN) \leq \cN,
	\qquad
	L(\tilde{u}_\cN) \leq C' \cN^{\min\left\{\tfrac12,\tfrac{1}{d+1/\delta}\right\}}\log(\cN),
	\\
	\normc[{W^{1,\infty}([-1,1]^d)}]{u - \mathrm{R}(\tilde{u}_\cN)}
	\leq
	C' \exp\left(- \beta'\cN^{\min\left\{\tfrac{1}{2\delta},\tfrac{1}{d\delta+1}\right\}} \right).
	\end{gather*}
	This implies that for every $0<\eps<1/2$,
	a pointwise error of $\cO(\eps)$ in $[a,b]$ 
	can be achieved by some ReLU NN $\psi^u_\eps$ 
	of 
	depth $\cO(|\log(\eps)|^{\delta} |\log(|\log(\eps)|)|)$
	and of 
	size $\cO(|\log(\eps)|^{2\delta})$.

	This completes the proof.
\end{proof}
\subsection{Summary and Discussion}
\label{sec:SumDisc}
For prices of derivative contracts 
on one risky asset, whose log-returns are
modelled by a LP $X$, we have analyzed the 
expression rate of deep ReLU NNs. 
We provided two mathematically distinct approaches
to the analysis of the expressive power of deep
ReLU NNs. 
The first, probabilistic approach
furnished algebraic expression rates, i.e.\
pointwise accuracy $\varepsilon>0$ on
a bounded interval $[a,b]$ was furnished with 
DNNs of size $\cO(\varepsilon^{-q})$ with 
suitable $q\geq 0$. 
The argument is based on approximating the option price by Monte Carlo sampling, 
estimating the uniform error on $[a,b]$ and then emulating the resulting average by a DNN.
The second, ``analytic'' approach, leveraged
regularity of (variational) solutions of the corresponding
Kolmogorov partial integrodifferential equations,
and furnished exponential rates of DNN expression.
That is, expression error $\varepsilon > 0$ is achieved
with DNNs of size $\cO(|\log(\varepsilon)|^a)$ for suitable $a>0$.
Key in the second approach were stronger
conditions \eqref{eq:StrEllPsi} on the characteristic 
exponent of the LP $X$, which imply, as we showed, 
Gevrey-$\delta$ regularity of the map $s\mapsto u(\tau,s)$
for suitable $\tau>0$. This regularity implies, in turn,
exponential rates of polynomial approximation (in the uniform
norm on $[a,b]$) of $s\mapsto u(\tau,s)$, 
which is a result of independent interest and, subsequently, 
by emulation of polynomials with deep ReLU NNs, 
the corresponding exponential rates.

We remark that in the particular case $\delta = 1$, 
the derivative bounds \eqref{eq:GevEst} 
imply analyticity of the map $s\mapsto u(\tau,s)$ for $s\in [a,b]$ 
which implies the assertion also with the exponential expression rate 
bound for analytic functions in Opschoor et al.\ \cite{OSZ19_839}.

We also remark that the smoothing of the solution operator
in Proposition \ref{prop:Holom} accommodated payoff functions
which belong merely to $L^2$, 
as arise e.g.\ in particular binary contracts. 
This is a consequence of the assumption \eqref{eq:StrEllPsi} 
which, on the other hand, excludes L\'evy processes with one-sided jumps. 
Such processes are covered by Proposition \ref{prop:1dresult}.
\section{DNN approximation rates for multivariate L\'evy models}
\label{sec:DNNMultivLevy}
We now turn to DNN expression rates for multivariate geometric L\'evy models.
This is a typical situation when option prices on baskets of $d$ risky assets
are of interest, whose log-returns are modelled by multivariate L\'evy processes.
We admit rather general jump measures with, in particular, fully correlated 
jumps in the marginals, as provided, for example, 
by so-called L\'evy copula constructions in Kallsen and Tankov \cite{KT06}.

As in the univariate case, 
we prove two results on ReLU DNN expression rates 
of option prices for European style contracts. 
The first argument is developed in Section \ref{sec:DNNProbab}
below and overcomes, in particular, the curse of dimensionality.
Its proof is again based on 
probabilistic arguments from statistical learning theory.
As exponential LPs $X^d$ generalize geometric Brownian motions,
Theorem \ref{prop:Ddresult} generalizes several results 
from the classical Black--Scholes setting and
we comment on the relation of Theorem \ref{prop:Ddresult} 
to these recent results in Section \ref{sec:RelatedLit}.
Owing to the method of proof, the DNN expression rate in Theorem
\ref{prop:Ddresult} will deliver an $\varepsilon$-complexity
of $\cO(\varepsilon^{-2})$, achieved with potentially shallow DNNs, 
see Remark~\ref{rem:architecture}.

The second argument is based on parabolic regularity 
of the deterministic Kolmogorov PIDE associated to the LP $X^d$.
We show in Theorem \ref{thm:DNNExprd} that polylogarithmic in 
$\varepsilon$ expression rate bounds can be achieved
by allowing DNN depth to increase essentially as $\cO(|\log \varepsilon|)$.
The result in Theorem \ref{thm:DNNExprd} is, however, prone to the 
curse of dimensionality: constants implied in the $\cO(\cdot)$ bounds 
may (and, generally, will) depend exponentially on $d$.
We also show that
under a hypothesis on sufficiently large time $t>0$, 
parabolic smoothing will allow to overcome the curse of dimension,
with dimension-independent expression rate bounds which are possibly
larger than the rates furnished by the probabilistic argument (which is,
however, valid uniformly for all $t>0$).
\subsection{DNN expression rate bounds via probabilistic argument}
\label{sec:DNNProbab}
We start by remarking that in this subsection,
there is no need to assume ReLU activation.

The following result proves that neural networks are capable of approximating 
option prices in multivariate exponential L\'evy models 
without the curse of dimensionality given that the corresponding 
L\'evy triplets $(A^d,\gamma^d,\nu^d)$
are bounded uniformly with respect to the dimension $d$. 

For any dimension $d \in \N$ we assume given a payoff 
$\varphi_d \colon \R^d \to \R$, 
a $d$-variate LP $X^d$ and 
we denote the option price in time-to-maturity 
by
\begin{equation} \label{eq:optionPriceD}
u_d(\tau,s) 
= 
\E\left[\varphi_d\big(s_1 \exp(X_{\tau,1}^d),\ldots,s_d \exp(X_{\tau,d}^d)\big)\right], 
\quad \tau \in [0,T], s \in (0,\infty)^d.
\end{equation}
We refer to Sato \cite{Sato1999} for more details on multivariate L\'evy processes 
and to Cont and Tankov \cite{Cont2004}, Eberlein and Kallsen \cite{EberKall19} for more details on multivariate geometric L\'evy models in finance.

The next theorem is a main result of the present paper.
It states that DNNs can efficiently 
express prices on possibly large baskets of risky assets 
whose dynamics are driven by multivariate L\'evy processes
with general jump correlation structure.
The expression rate bounds are polynomial in the number $d$
of assets and, therefore, not prone to the curse of dimensionality.
This result partially generalizes earlier work on DNN expression rates for diffusion 
models in Elbr\"achter et al.\ \cite{EGJS18_787}, Grohs et al.\ \cite{HornungJentzen2018}.
\begin{theorem} \label{prop:Ddresult}
	Assume that for any $d \in \N$,
	the payoff $\varphi_d \colon \R^d \to \R$ can be 
	approximated well by neural networks, that is, 
	there exists constants $c>0$, $p \geq 2,\tilde{q},q\geq 0$ and, 
	for all $\varepsilon \in (0,1]$, $d \in \N$,
	there exists a neural network $\phi_{\varepsilon,d}$ 
	with  
	\begin{align} \label{eq:NNcloseD}
	|\varphi_d(s)-\Rl(\phi_{\varepsilon,d})(s)| 
	& \leq \varepsilon c d^{\tilde{q}} (1+\|s\|^p), \quad \text{ for all } s \in (0,\infty)^d,
	\\ \label{eq:NNsparseD} 
	M(\phi_{\varepsilon,d}) &\leq c d^{\tilde{q}} \varepsilon^{-q}, 
	\\ \label{eq:NNlipschitzD} 
	\mathrm{Lip}\big(\Rl(\phi_{\varepsilon,d})\big) & \leq c d^{\tilde{q}}.
	\end{align}
	In addition, assume that the L\'evy triplets $(A^d,\gamma^d,\nu ^d)$ of $X^d$ are 
	bounded in the dimension, that is, 
	there exists a constant $B > 0$ such that 
	for each $d \in \N$, $i,j=1,\ldots,d$,
	\begin{equation}
	\label{eq:LevyTripletMultiD}
	\max\left\{A^d_{ij},
	\gamma^d_i ,
	\int_{\{\|y\|> 1\}} e^{p y_i} \nu^d(d y),
	\int_{\{\|y\|\leq 1\}} y_i^2 \nu^d(d y)\right\}  \leq B. 
	\end{equation}
	Then there exist constants $\kappa,\mathfrak{p},\mathfrak{q} \in [0,\infty)$ and 
	neural networks $\psi_{\varepsilon,d}$, $\varepsilon \in (0,1]$, $d \in \N$  
	such that for any target accuracy $\varepsilon \in (0,1]$ and for any  $d \in \N$  
	the number of weights grows only polynomially 
	$M(\psi_{\varepsilon,d}) \leq \kappa d^{\mathfrak{p}}\varepsilon^{-\mathfrak{q}}$ 
	and the approximation error between the neural network $\psi_{\varepsilon,d}$ 
	and the option price is at most $\varepsilon$, that is, 
	\[
	\sup_{s \in [a,b]^d} |u_d(T,s) - \Rl(\psi_{\varepsilon,d})(s)| \leq \varepsilon.
	\]
\end{theorem}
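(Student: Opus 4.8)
The plan is to mimic the univariate argument in Proposition~\ref{prop:1dresult}, upgrading each estimate so that all constants track their dependence on the dimension $d$ polynomially. First I would fix $\varepsilon \in (0,1]$ and an auxiliary $\bar{\varepsilon} \in (0,1]$, write $\phi = \phi_{\bar{\varepsilon},d}$, and record the key moment bound: assumption \eqref{eq:LevyTripletMultiD} together with Sato \cite[Theorem~25.17]{Sato1999} should give a bound on $\E[e^{p X^d_{T,i}}]$ that is \emph{uniform in $d$} (each coordinate $X^d_{T,i}$ is a univariate L\'evy process whose triplet is controlled by $B$), and hence, via the inequality $\|s e^{X^d_T}\|^p \leq d^{p/2-1}\sum_i (s_i e^{X^d_{T,i}})^p$ or a similar equivalence of norms, a bound on $\E[(1+\|s e^{X^d_T}\|^p)]$ that grows only polynomially in $d$ for $s \in [a,b]^d$. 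Combining this with the growth estimate \eqref{eq:NNcloseD} yields the analogue of \eqref{eq:auxEq4}, namely $\sup_{s\in[a,b]^d}|u_d(T,s) - \E[\Rl(\phi)(se^{X^d_T})]| \leq \bar{\varepsilon}\, c_1$ with $c_1 = c_1(d)$ polynomial in $d$.

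The second step is the symmetrization and contraction argument. I would introduce $n$ i.i.d.\ copies $X^{d,1},\ldots,X^{d,n}$ of $X^d$ and Rademacher variables, and apply the Ledoux--Talagrand contraction principle exactly as in the univariate proof, now with the Lipschitz constant $\mathrm{Lip}(\Rl(\phi)) \leq c d^{\tilde q}$ from \eqref{eq:NNlipschitzD}. The contraction step reduces the Rademacher average of $f(se^{X^d_T})$ to that of the linear functionals $s \mapsto \langle \text{coordinates of } se^{X^d_T}\rangle$; after bounding $|se^{X^d_T}| \leq b\,\|e^{X^d_T}\|$ on $[a,b]^d$ and using Jensen together with independence and $\E[\varepsilon_k\varepsilon_l]=\delta_{kl}$, I expect to arrive at an estimate of the form
\[
\E\left[\sup_{s\in[a,b]^d}\Big| \E[\Rl(\phi)(se^{X^d_T})] - \tfrac{1}{n}\sum_{k=1}^n \Rl(\phi)(se^{X^{d,k}_T})\Big|\right] \leq \frac{c_2(d)}{\sqrt{n}},
\]
where $c_2(d)$ is again polynomial in $d$, built from $b$, $\mathrm{Lip}(\Rl(\phi)) \leq c d^{\tilde q}$ and a dimension-uniform control of $\E[\|e^{X^d_T}\|^2]^{1/2}$ coming from \eqref{eq:LevyTripletMultiD}. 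The extra care relative to the $1$d case is that the supremum and the vector norm interact, so I would need a $d$-dimensional version of the contraction step, treating the $d$ output coordinates of $\Rl(\phi)$ componentwise or using the vector-valued form of the contraction inequality.

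The third step is identical in spirit to the univariate one: Markov's inequality shows that the event where the empirical average is within $\tfrac{2c_2(d)}{\sqrt n}$ of its mean has positive probability, so some realization $\omega$ achieves this bound pointwise; Lemma~\ref{lem:averageNN} then assembles $s \mapsto \tfrac1n\sum_k \Rl(\phi)(se^{X^{d,k}_T(\omega)})$ into a single network $\tilde\psi$ with $M(\tilde\psi) \leq n\, M(\phi)$, using the diagonal-matrix clause of the lemma since each copy acts by the diagonal scaling $s \mapsto \mathrm{diag}(e^{X^{d,k}_T(\omega)})\,s$ and $c_i = 0$. Finally I would set $\bar\varepsilon = \varepsilon(c_1(d)+1)^{-1}$ and $n = \lceil (2c_2(d)\bar\varepsilon^{-1})^2\rceil$, giving $M(\psi_{\varepsilon,d}) \leq n\,M(\phi) \leq \kappa\, d^{\mathfrak p}\varepsilon^{-\mathfrak q}$ after collecting the polynomial-in-$d$ constants from $c_1(d)$, $c_2(d)$ and the bound $M(\phi) \leq c d^{\tilde q}\bar\varepsilon^{-q}$. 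The main obstacle I anticipate is the careful bookkeeping of the $d$-dependence through the moment bounds and the contraction step: one must verify that the norm equivalences and the application of Ledoux--Talagrand in $d$ dimensions do not introduce constants that grow faster than polynomially (e.g.\ avoiding an inadvertent $d^{p/2}$ that compounds badly), and that the uniform-in-$d$ moment bound genuinely follows from \eqref{eq:LevyTripletMultiD} via the Sato characterization of exponential moments of marginal L\'evy processes.
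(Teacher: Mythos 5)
Your overall architecture---payoff replacement with dimension-tracked moment bounds, a symmetrization/contraction estimate of order $n^{-1/2}$, Markov's inequality to extract a good $\omega$, assembly via the diagonal-matrix clause of Lemma~\ref{lem:averageNN}, and the final choice of $\bar\varepsilon$ and $n$---coincides with Steps 1, 3 and 4 of the paper's proof, and your Step 1 is carried out exactly as there (Sato's Theorem~25.17 applied to each coordinate yields the bound \eqref{eq:expMomentMultiD} on $\E[e^{pX_{T,i}^d}]$, uniform in $d$). However, the step you yourself flag as the main obstacle is precisely where your plan, as written, fails, and it is where the paper does something genuinely different. The Ledoux--Talagrand contraction principle (Theorem~4.12 in \cite{Ledoux2013}) applies to \emph{scalar} contractions $\varphi_k\colon\R\to\R$ acting coordinatewise on a subset of $\R^n$; in the multivariate setting the relevant composition is $s\mapsto\phi(se^x)$ with $\phi=\Rl(\phi_{\bar\varepsilon,d})(\cdot)-\Rl(\phi_{\bar\varepsilon,d})(0)$ a Lipschitz map from $\R^d$ to $\R$ of a $d$-dimensional argument, so the univariate argument does not transfer ``exactly''. (Note also that $\Rl(\phi_{\bar\varepsilon,d})$ has a \emph{single} output coordinate; the vector structure sits in the input, i.e.\ in the direct sum $\tilde\Hc=\Hc_1\oplus\cdots\oplus\Hc_d$ of the coordinate classes $x\mapsto s\exp(x_i)$, so ``treating the $d$ output coordinates of $\Rl(\phi)$ componentwise'' does not parse.) The paper resolves this by passing from Rademacher to \emph{Gaussian} complexities (at the cost of the absolute constant $\sqrt{\pi/2}$, cf.\ \cite[Lemma~4]{Bartlett2003} and the explicit conditional-expectation argument in the proof) and invoking \cite[Theorem~14]{Bartlett2003}, which yields $\hat G_n(\phi(\tilde\Hc))\le 2\,\mathrm{Lip}(\phi)\sum_{i=1}^d\hat G_n(\Hc_i)$, i.e.\ inequality \eqref{eq:auxEq11}. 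A vector-valued Rademacher contraction inequality (e.g.\ Maurer's) might serve instead, but you would have to name it and verify its hypotheses, including the handling of the absolute value inside the supremum; ``a $d$-dimensional version of the contraction step'' is exactly the missing idea, not a routine upgrade.

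There is a second, connected gap: the Gaussian-complexity route requires $\phi$ to be \emph{bounded} on the range of $\tilde\Hc$, and your proposal contains no analogue of the paper's Step~2, the componentwise truncation $X_T^{d,D}=\min(X_T^d,D)$. This truncation confines the range of $\tilde\Hc$ to $[0,b\exp(D)]^d$, makes \cite[Theorem~14]{Bartlett2003} applicable, and contributes the additional error term $\tilde c_1 e^{-D}d^{\tilde q+1}$ of \eqref{eq:auxEq12} (obtained from the Lipschitz bound \eqref{eq:NNlipschitzD}, H\"older's inequality and a Chernoff bound), which is then neutralized in Step~4 by the choice of a \emph{third} hyperparameter, $D=\log(\bar\varepsilon^{-1}d^{\tilde q+1}\tilde c_1)$, absent from your parameter selection. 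So as the plan stands, either the contraction step is unjustified (scalar Ledoux--Talagrand does not apply), or, if you adopt the paper's Gaussian-complexity fix, its boundedness hypothesis fails without the truncation you omitted. Everything else---the moment bookkeeping producing constants polynomial in $d$ (the paper obtains exponents such as $d^{\tilde q+\frac12 p+\frac12}$ and $d^{\tilde q+1}$, culminating in the size bound \eqref{eq:auxEq25}), the positive-probability extraction of $\omega$, and $M(\tilde\psi)\le n\,M(\phi)$ via Lemma~\ref{lem:averageNN}---is correct and matches the paper.
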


\begin{remark}\label{rmk:logGrow}
	The statement of Theorem~\ref{prop:Ddresult} is still valid, 
	if we admit logarithmic growth of $B$ with $d$ in \eqref{eq:LevyTripletMultiD}. 
\end{remark}

\begin{remark}
	As in the univariate case (cf.\ Remark~\ref{rmk:q=0}), 
	in relevant examples of options written on $d>1$ underlyings 
	(such as basket options, call on max/min options, put on max/min options, \ldots)
	the payoff can be represented \textit{exactly} 
	as a ReLU DNN. Thus,
	we may choose $q=0$ in \eqref{eq:NNsparseD} and obtain 
	$\mathfrak{q} = 2$ in Theorem~\ref{prop:Ddresult} (cf.\ \eqref{eq:auxEq25}).
\end{remark}
\begin{proof}
	Let $\varepsilon \in (0,1]$ be the given target accuracy and 
	consider $\bar{\varepsilon} \in (0,1]$ (to be selected later). 
	To simplify notation we write for $s \in [a,b]^d$ 
	\[
	s e^{X_{T}^d} = \left(s_1 \exp(X_{T,1}^d),\ldots,s_d \exp(X_{T,d}^d)\right).
	\]
	The proof consists in four steps:
	\begin{itemize}
		\item 
		Step 1 bounds the error that arises when the payoff $\varphi_d$ 
		is replaced by the neural network approximation $\phi_{\bar{\varepsilon},d}$. 
		As a part of Step 1 we also prove that the $p$-th exponential moments of the 
		components $X_{T,i}^d$ of the L\'evy process are bounded uniformly in the dimension $d$. 
		\item 
		Step 2 is a technical step that is required for Step 3; 
		it bounds the error that arises when the L\'evy process is capped at a threshold $D>0$.
		If we assumed in addition that the output of the neural network $\phi_{\bar{\varepsilon},d}$ 
		were bounded (this is for example the case if the activation function $\varrho$ is bounded), 
		then Step 2 could be omitted.
		\item 
		Step 3 is the key step in the proof. 
		We introduce $n$ i.i.d.\ copies of (the capped version of) 
		$X_T^d$ and use statistical learning techniques 
		(symmetrization, Gaussian and Rademacher complexities) to estimate the 
		expected maximum difference between the option price (with neural network payoff) 
		and its sample average. 
		This is then used to construct the approximating neural networks.
		\item Step 4 combines the estimates from Steps 1-3 and concludes the proof. 
	\end{itemize}
	
	\textit{Step 1:}
	Assumption \eqref{eq:NNcloseD} and H\"older's inequality yield for all $s \in [a,b]^d$
	\begin{align}\nonumber
	|u_d(T,s) - \E[\Rl(\phi_{\bar{\varepsilon},d})(se^{X_T^d})]| 
	& \leq \E[|\varphi_d(se^{X_T^d}) - \Rl(\phi_{\bar{\varepsilon},d})(se^{X_T^d})|] 
	\\ \nonumber 
	&  \leq \bar{\varepsilon} c d^{\tilde{q}} (1+\E[\|se^{ X_T^d}\|^p])
	\\ \nonumber 
	&  = \bar{\varepsilon} c d^{\tilde{q}} \left(1+\E\left[\left(\sum_{i=1}^{d} s_i^2 e^{2 X_{T,i}^d}\right)^{p/2}\right]\right)
	\\ \nonumber 
	&  \leq \bar{\varepsilon} c d^{\tilde{q}} \left(1+b^p \E\left[d^{(p-1)/2}(\sum_{i=1}^{d} e^{2 p X_{T,i}^d})^{1/2}\right]\right)
	\\ &  
	\leq \bar{\varepsilon} c_1 d^{{\tilde{q}}+\frac{1}{2}p+\frac{1}{2}} 
	\label{eq:auxEq7} \end{align}
	with the constant $c_1 = c\max\{1,b^p\}  (1+ \sup_{d,i} \E[e^{p X_{T,i}^d}])$ 
	and we used that  $\|\cdot\| \leq \|\cdot\|_1$ in the last step.
	To see that $c_1$ is indeed finite, 
	note that \eqref{eq:LevyTripletMultiD} and \cite[Theorem~25.17]{Sato1999} 
	(with the vector $w\in \IR^d$ in that result being $pe_i$) 
	imply that for any $d \in \N$, $i=1,\ldots,d$,
	the exponential moment can be bounded as 
	\begin{align} 
	\nonumber
	\E[e^{p X_{T,i}^d}] & = \exp\left(T\Big(\frac{p^2}{2}A^d_{ii} + \int_{\R^d} (e^{p y_i}-1-p y_i \mathbbm{1}_{\{\|y\|\leq 1\}}) \nu^d(d y) + p \gamma^d_i \Big)\right)
	\\ \nonumber & \leq \exp\Bigg(T\Big(\frac{3 p^2}{2}B + \int_{\{\|y\|\leq 1\}} (e^{p y_i}-1-p y_i) \nu^d(d y) \\ \nonumber & \quad \quad + \int_{\{\|y\|> 1\}} (e^{p y_i}-1) \nu^d(d y) \Big)\Bigg)
	\\ \nonumber & \leq \exp\left(T\Big(\frac{5 p^2}{2}B + p^2 e^{p} \int_{\{\|y\|\leq 1\}} y_i^2 \nu^d(d y) \Big)\right)
	\\ & \leq \exp\left(T\Big(\frac{5 p^2}{2}B + p^2 e^{p} B \Big)\right),
	\label{eq:expMomentMultiD} 
	\end{align}
	where in the second inequality we used that
	$ |e^{z}-1-z| \leq z^2 e^p$ for all $z \in [-p,p]$
	which can be seen e.g.\ from the (mean value form of the) 
	Taylor remainder formula.
	
	\textit{Step 2:}
	Before proceeding with the key step of the proof, we need to introduce a cut-off in order to ensure that the neural network output is bounded. Let $D>0$ and consider the random variable $X_T^{d,D} = \min(X_T^d,D)$, where the minimum is understood componentwise. Then the Lipschitz property \eqref{eq:NNlipschitzD} implies
	
	\begin{align} \nonumber
	|\E[\Rl(\phi_{\bar{\varepsilon},d})(se^{X_T^d})] &  - \E[\Rl(\phi_{\bar{\varepsilon},d})(se^{X_T^{d,D}})]| \\ \nonumber & \leq c d^{\tilde{q}} \E[\|se^{X_T^d} - se^{X_T^{d,D}}\|] 
	\\ \nonumber &  \leq  b c d^{\tilde{q}} \E\left[\sum_{i=1}^{d} |e^{X_{T,i}^d} - e^{X_{T,i}^{d,D}}|\right]
	\\\nonumber &  \leq  b c d^{\tilde{q}} \sum_{i=1}^{d} \E\left[ 2e^{X_{T,i}^d} \mathbbm{1}_{\{X_{T,i}^d>D\}}\right]
	\\\nonumber &  \leq  2b c d^{\tilde{q}} \sum_{i=1}^{d} \E[e^{2X_{T,i}^d}]^{1/2} \P[X_{T,i}^d>D]^{1/2}
	\\\nonumber &  \leq  2 e^{-D} b c d^{\tilde{q}} \sum_{i=1}^{d} \E[e^{2X_{T,i}^d}]
	\\ &  \leq \tilde{c}_1  e^{-D} d^{{\tilde{q}}+1} ,
	\label{eq:auxEq12}  \end{align}
	where $\tilde{c}_1 = 2 b c \exp(5TpB + 2Te^{p}pB)$ and we used $\|\cdot\| \leq \|\cdot\|_1$, H\"older's inequality, Chernoff's bound and finally again H\"older's inequality and \eqref{eq:expMomentMultiD}.
	
	\textit{Step 3:}
	Let $X_1,\ldots,X_n$ denote $n$ i.i.d.\ copies of the random vector $X_{T}^{d,D}$ and
	let $Z_1,\ldots,Z_n$ denote i.i.d.\ standard normal variables, independent of $X_1,\ldots,X_n$. For any separable class of functions $\Hc \subset C(\R^d,\R)$ define the random variable (the so-called empirical Gaussian complexity)
	\[
	\hat{G}_n(\Hc) = \E\left[ \left. \sup_{f \in \Hc} \Big|  \frac{2}{n} \sum_{k=1}^n Z_k f(X_k) \Big| \right| X_1,\ldots,X_n \right].
	\] 
	Consider now for $i=1,\ldots,d$ the function classes 
	\[ \Hc_i = \{(-\infty,D]^d \ni x \mapsto s \exp(x_i) \colon s \in [a,b]
	\}
	\]
	and, with the notation $s \exp(x)=(s_1 \exp(x_1),\ldots,s_d\exp(x_d))$, 
	the class
	\[ 
	\Hc = \{ (-\infty,D]^d \ni x \mapsto 
	\Rl(\phi_{\bar{\varepsilon},d})\big(s \exp(x)\big)-\Rl(\phi_{\bar{\varepsilon},d})(0) \colon s \in [a,b]^d  \}. 
	\]
	Denoting by $\tilde{\Hc} \subset C((-\infty,D]^d,\R^d)$ the direct sum of $\Hc_1,\ldots,\Hc_d$, we have that
	\[
	\Hc = \phi(\tilde{\Hc})
	\]
	where $\phi = \Rl(\phi_{\bar{\varepsilon},d})(\cdot)-\Rl(\phi_{\bar{\varepsilon},d})(0)$ 
	is a Lipschitz-function with Lipschitz-constant $cd^{\tilde{q}}$ 
	(due to hypothesis on the Lipschitz-constant of the neural network \eqref{eq:NNlipschitzD}) and $\phi$ satisfies
	$\phi(0)=0$ and $\phi$ is bounded on the range of $\tilde{\Hc}$ 
	(which is contained in $[0,b\exp(D)]^d$).
	
	Consequently, Theorem~14 in Bartlett and Mendelson \cite{Bartlett2003} implies that 
	\begin{equation}\label{eq:auxEq11}
	\hat{G}_n(\Hc) \leq 2 c d^{\tilde{q}} \sum_{i=1}^d \hat{G}_n(\Hc_i).
	\end{equation}
	Therefore, denoting by $\varepsilon_1,\ldots,\varepsilon_n$ 
	an independent collection of Rademacher random variables, 
	we estimate
	\begin{align} \nonumber
	& \E\left[\sup_{s \in [a,b]^d} \left| \E[\Rl(\phi_{\bar{\varepsilon},d})(se^{X_T^{d,D}})] - \frac{1}{n} \sum_{k=1}^n \Rl(\phi_{\bar{\varepsilon},d})(se^{X_k}) \right| \right]  
	\\ \nonumber& \leq 2
	\E\left[\sup_{s \in [a,b]^d} \left| \frac{1}{n} \sum_{k=1}^n \varepsilon_k \phi(se^{X_k}) \right| \right]  
	\\ \nonumber& \leq \tilde{c_2}
	\E\left[\sup_{s \in [a,b]^d} \left| \frac{2}{n} \sum_{k=1}^n Z_k \phi(se^{X_k}) \right| \right] 
	\\ \nonumber& = \tilde{c_2}
	\E\left[\sup_{f \in \Hc} \left| \frac{2}{n} \sum_{k=1}^n Z_k f(X_k) \right| \right] 
	\\ \nonumber& \leq 2 \tilde{c_2} c d^{\tilde{q}} \sum_{i=1}^d \E[\hat{G}_n(\Hc_i)]
	\\ & \leq \frac{4 \tilde{c_2} c d^{\tilde{q}} b }{n} \sum_{i=1}^d \E\left[\left| \sum_{k=1}^n Z_k e^{X_{k,i}} \right| \right].
	\label{eq:auxEq10} \end{align}
	Here, 
	the first inequality follows by symmetrization (see for example Boucheron et al.\ \cite[Lemma~11.4]{Boucheron2013}), 
	the second inequality follows from the comparison results on Gaussian and Rademacher complexities 
	(see for instance Bartlett and Mendelson \cite[Lemma~4]{Bartlett2003}) 
	with some absolute constant $\tilde{c_2}$ and 
	the third inequality uses \eqref{eq:auxEq11}.
	
	In fact, it is possible to prove that the constant  $\tilde{c_2}$ in \eqref{eq:auxEq10} may be chosen as $\tilde{c_2} =  1/\E[|Z_1|] = \sqrt{\pi/2}$. 
	Indeed, setting $\Gc=\sigma(\varepsilon_1,\ldots,\varepsilon_n,X_1,\ldots,X_n)$ and using independence yields
	
	\[\begin{aligned}
	\E[|Z_1|] 
	& \E\left[\sup_{s \in [a,b]^d} \left| \frac{1}{n} \sum_{k=1}^n \varepsilon_k \phi(se^{X_k}) \right| \right] 
	\\ & 
	= \E\left[\sup_{s \in [a,b]^d} \left| \frac{1}{n} \sum_{k=1}^n \E[|Z_k| | \Gc ]\varepsilon_k \phi(se^{X_k}) \right| \right] 
	\\ & 
	= \E\left[\sup_{s \in [a,b]^d} \left|  \E\left[\left.\frac{1}{n} \sum_{k=1}^n |Z_k| \varepsilon_k \phi(se^{X_k}) \right|  \Gc \right] \right| \right] 
	\\ & \leq 
	\E\left[\E\left[\sup_{s \in [a,b]^d} \left. \big| \frac{1}{n} \sum_{k=1}^n |Z_k| \varepsilon_k \phi(se^{X_k}) \big| \right|  \Gc \right] \right] 
	\\ & =
	\E\left[\sup_{s \in [a,b]^d} \left| \frac{1}{n} \sum_{k=1}^n Z_k \phi(se^{X_k}) \right|   \right] .
	\end{aligned}
	\]
	
	To further simplify \eqref{eq:auxEq10},
	we now apply Jensen's inequality and use independence 
	and $\E[Z_k Z_l] = \delta_{k,l}$ 
	to derive for $i=1,\ldots,d$
	\[ 
	\begin{aligned} 
	\E\left[\left| \sum_{k=1}^n Z_k e^{X_{k,i}} \right| \right] 
	& \leq  \E\left[\left| \sum_{k=1}^n Z_k e^{X_{k,i}} \right|^2 \right]^{1/2} 
	=  \left(\sum_{k=1}^n \E[e^{2 X_{k,i}} ]\right)^{1/2} \\ 
	& \leq \sqrt{n} \E[e^{2 X_{T,i}^d}]^{1/2}  
	\leq \sqrt{n} \E[e^{p X_{T,i}^d}]^{1/p}.
	\end{aligned} 
	\]
	
	Combining this with the previous estimate \eqref{eq:auxEq10} and 
	with the exponential moment estimate \eqref{eq:expMomentMultiD} 
	we obtain that 
	\begin{equation*}
	\E\left[\sup_{s \in [a,b]^d} 
	\left| \E[\Rl(\phi_{\bar{\varepsilon},d})(se^{X_T^{d,D}})] - \frac{1}{n} \sum_{k=1}^n \Rl(\phi_{\bar{\varepsilon},d})(se^{X_k}) \right| \right]  
	\leq 
	\frac{c_2 d^{{\tilde{q}}+1}}{\sqrt{n}} 
	\end{equation*}
	with $c_2 = 4 \sqrt{\pi/2} c b \exp\left(5BT p/2 + BTp e^{p}\right)$. 
	By applying Markov's inequality (see \eqref{eq:auxEq14}-\eqref{eq:auxEq16}) 
	this proves that there exists $\omega \in \Omega$ with 
	\begin{equation*}
	\sup_{s \in [a,b]^d} 
	\left| 
	\E[\Rl(\phi_{\bar{\varepsilon},d})(se^{X_T^{d,D}})] - \frac{1}{n} \sum_{k=1}^n \Rl(\phi_{\bar{\varepsilon},d})(se^{X_k(\omega)}) 
	\right|
	\leq 
	\frac{2 c_2 d^{{\tilde{q}}+1}}{\sqrt{n}}.
	\end{equation*}
	Now, we observe 
	that $s \mapsto \frac{1}{n} \sum_{k=1}^n \Rl(\phi_{\bar{\varepsilon},d})(se^{X_k(\omega)})$ 
	is the realization of a neural network 
	$\tilde{\psi}_{\bar{\varepsilon},d}$ with $M(\tilde{\psi}_{\bar{\varepsilon},d}) \leq n M(\phi_{\bar{\varepsilon},d})$ 
	(see Lemma~\ref{lem:averageNN}).
	We have therefore proved that for arbitrary $n\in \IN$ 
	there exists a neural network $\tilde{\psi}_{\bar{\varepsilon},d}$ 
	with
	\begin{equation}\label{eq:auxEq18}
	\sup_{s \in [a,b]^d} 
	\left| \E[\Rl(\phi_{\bar{\varepsilon},d})(se^{X_T^{d,D}})] -  \Rl(\tilde{\psi}_{\bar{\varepsilon},d})(s) \right| 
	\leq 
	\frac{2 c_2 d^{{\tilde{q}}+1}}{\sqrt{n}}.
	\end{equation}
	
	\textit{Step 4:} 
	In the final step we now provide appropriate choices of the hyperparameters. 
	We select $\bar{\varepsilon} = \varepsilon (c_1 d^{{\tilde{q}}+\frac{1}{2}p+\frac{1}{2}}+2)^{-1}$, 
	choose $n = \lceil(2 c_2 d^{{\tilde{q}}+1} \bar{\varepsilon}^{-1})^2 \rceil$, 
	$D= \log(\bar{\varepsilon}^{-1}d^{{\tilde{q}}+1} \tilde{c}_1)$ 
	and set 
	$\psi_{\varepsilon,d} = \tilde{\psi}_{\bar{\varepsilon},d}$. 
	Then the total number of parameters of the approximating neural network 
	can be estimated using assumption \eqref{eq:NNsparseD} 
	as
	\begin{align} \nonumber
	M(\psi_{\varepsilon,d}) 
	= M(\tilde{\psi}_{\bar{\varepsilon},d}) 
	& \leq n M(\phi_{\bar{\varepsilon},d}) 
	\\  \nonumber
	&  \leq (1+(2 c_2 d^{{\tilde{q}}+1} \bar{\varepsilon}^{-1})^2) c d^{\tilde{q}} \bar{\varepsilon}^{-q}
	\\  \nonumber
	&  \leq (1+4 c_2^2) c d^{3{\tilde{q}}+2} \bar{\varepsilon}^{-2-q}
	\\ 
	& \leq \big((1+4 c_2^2) c(c_1 +2)^{2+q}\big) d^{({\tilde{q}}+\frac{1}{2}p+\frac{1}{2})(2+q)+3{\tilde{q}}+2} \varepsilon^{-2-q},
	\label{eq:auxEq25} \end{align}
	which shows the number of weights to be bounded polynomially in $d$ 
	and $\varepsilon^{-1}$, as claimed. 
	
	Finally, we combine \eqref{eq:auxEq7}, \eqref{eq:auxEq12} and \eqref{eq:auxEq18} 
	to estimate the approximation error as
	\[\begin{aligned}
	\sup_{s \in [a,b]^d} & |u_d(T,s) - \Rl(\psi_{\varepsilon,d})(s)| 
	\\ 
	& \leq \sup_{s \in [a,b]^d} 
	\left( |u_d(T,s) -  \E[\Rl(\phi_{\bar{\varepsilon},d})(se^{X_T^d})]| |\right. \\ & \quad \quad  + \left. 
	| \E[\Rl(\phi_{\bar{\varepsilon},d})(se^{X_T^d})]-\E[\Rl(\phi_{\bar{\varepsilon},d})(se^{X_T^{d,D}})] |\right. 
	\\ & \quad \quad  + \left. |\E[\Rl(\phi_{\bar{\varepsilon},d})(se^{X_T^{d,D}})]- \Rl(\tilde{\psi}_{\bar{\varepsilon},d})(s)|\right)
	\\ & \leq \bar{\varepsilon} c_1 d^{{\tilde{q}}+\frac{1}{2}p+\frac{1}{2}} + \tilde{c}_1  e^{-D} d^{{\tilde{q}}+1} + \frac{2 c_2 d^{{\tilde{q}}+1}}{\sqrt{n}}
	\\ & \leq \bar{\varepsilon} (c_1 d^{{\tilde{q}}+\frac{1}{2}p+\frac{1}{2}}+2) = \varepsilon,
	\end{aligned}\]
	as claimed.
\end{proof}

\subsection{Discussion of related results}
\label{sec:RelatedLit}
As recently there have been several results on DNN expression rates 
in high dimensional diffusion models,
a discussion on the relation of the multivariate DNN expression rate result,
Thm.\ref{prop:Ddresult}, to other recent mathematical 
results on DNN expression rate bounds is in order.
Given that geometric diffusion models are particular cases of the presently
considered models (corresponding to $\nu^d = 0$ in the L\'evy triplet),
it is of interest to consider to which extent the DNN expression error bound
Thm.\ref{prop:Ddresult} relates to these results.

Firstly, we 
note that with the exception of 
Gonon et al.\ \cite{Gonon2019} and Elbr\"achter et al.\ \cite{EGJS18_787}, 
previous results in the literature which are concerned with DNN approximation rates 
for Kolmogorov equations for diffusion processes
(see, e.g., Gonon et al.\ \cite{Gonon2019}, Grohs et al.\ \cite{GrohsHornungJentzen2019}, Berner et al.\ \cite{BernerGrohsJentzen2018}, Elbr\"achter et al.\ \cite{EGJS18_787}, Grohs et al.\ \cite{HornungJentzen2018}, Reisinger and Zhang \cite{ReisingerZhang2019} 
and the references therein) 
study approximation with respect to the $L^p$-norm ($p<\infty$), 
whereas in Thm.\ref{prop:Ddresult} we study approximation with respect to the  
$L^\infty$-norm, which requires entirely different techniques. 
While the results in \cite{EGJS18_787} rely on specific structure of the payoff, 
the proof of the expression rates in \cite{Gonon2019} has some similarities with 
the proof of Thm.\ref{prop:Ddresult}. 
However, the novelty in the proof of Thm.\ref{prop:Ddresult} is the use of statistical 
learning techniques (symmetrization, Gaussian and Rademacher complexities) 
which allow for weaker assumptions on the activation function than in \cite{Gonon2019}. 
In addition, the class of PDEs considered in \cite{Gonon2019} (heat equation and related) 
is different than the one considered in Thm.\ref{prop:Ddresult} 
(Black--Scholes PDE and L\'evy PIDE).

Secondly, 
Thm.\ref{prop:Ddresult} is the first result on ReLU DNN expression rates for 
option prices in models with jumps or, equivalently, 
for \emph{partial-integrodifferential equations} 
in non-divergence form
\begin{equation}
\label{eq:MultdPDEx}
\begin{array}{rl}
\partial_t v_d(\tau,x) 
& =  \frac{1}{2}\mathrm{Trace}(A^d D^2_x v_d(\tau,x)) + D_x v_d(\tau,x)\gamma^d 
\\ 
& \quad  + \int_{\R^d } \left(v_d(\tau,x+y)-v_d(\tau,x)-D_x v_d(\tau,x) y \mathbbm{1}_{\{\|y\|\leq 1\}} \right) \nu^d(d y) , 
\\
v_d(0,x) &= (\varphi_d \circ \exp)(x)
\end{array}
\end{equation}
for $x \in \R^d, \tau > 0$ or, when transformed from log-price variables $x_i$ 
to actual price variables $s_i$ via $(s_1,\ldots,s_d)=(\exp(x_1),\ldots,\exp(x_d))$ (and with the convention $s e^y = (s_1e^{y_1},\ldots,s_d e^{y_d})$)
\begin{equation}
\label{eq:MultdPDEs}
\begin{array}{rl}
\partial_t u_d(\tau,s) 
& =  \frac{1}{2} \sum_{i,j=1}^d A^d_{i,j} s_i s_j \partial_{s_i} \partial_{s_j} u_d(\tau,s) 
+ \sum_{i=1}^d s_i \tilde{\gamma}^d_i \partial_{s_i} u_d(\tau,s) 
\\ 
& \;  + \int_{\R^d } 
\left(u_d(\tau,s e^y)-u_d(\tau,s)-\sum_{i=1}^d s_i (e^{y_i}-1) \partial_{s_i} u_d(\tau,s) \right) 
\nu^d(d y) , 
\\
u_d(0,s) &= \varphi_d(s)
\end{array}
\end{equation}
for $s \in (0,\infty)^d, \tau > 0$ 
and with $ \tilde{\gamma}_i^d = \gamma_i^d + \frac{A_{i,i}^d}{2} + \int_{\R^d} (e^{y_i}-1- y_i \mathbbm{1}_{\{\|y\|\leq 1\}}) \nu^d(d y)$  (see for instance \cite[Theorem~4.1]{Hilber2009}).
As in our assumptions also $A^d = 0$ is admissible under suitable conditions on $\nu^d$,
the present ReLU DNN expression rates are not mere generalizations of the diffusion 
case, but cover indeed the case of pure jump models both for
finite and for infinite activity L\'evy processes. 

In the case of $X$ being a diffusion with drift, i.e.\ for $\nu^d=0$, 
the L\'evy PIDE reduces to a Black--Scholes PDE. 
In this particular case, 
we may compare the result in Thm.\ref{prop:Ddresult} 
to the recent results e.g.\ in  Grohs et al. \cite{HornungJentzen2018}. 
The results in the latter article are specialized to the 
Black--Scholes case in Section~4  \cite{HornungJentzen2018}, 
where Setting~4.1 specifies the coefficients $(A^d)_{i,j}$ 
(in our notation) as  $ \beta_i^d \beta_j^d (B^d (B^d)^\top)_{i,j}$ for some 
$\beta^d \in \R^d, B^d \in \R^{d\times d}$ satisfying  $(B^d (B^d)^\top)_{k,k} = 1$ 
for all $d \in \N$, $i,j,k=1,\ldots,d$ and 
$\sup_{d,i} |\beta_i^d| < \infty$. 
The coefficient $\gamma^d$ is chosen as $\alpha^d$ 
satisfying $\sup_{d,i} |\alpha^d_i| < \infty$. 
Using that $\Sigma = (B^d (B^d)^\top)$ is symmetric, positive definite we obtain 
$\Sigma_{i,j} \leq \sqrt{\Sigma_{i,i}\Sigma_{j,j}} = 1$ 
and hence these assumptions imply that \eqref{eq:LevyTripletMultiD} is satisfied. 
Therefore, the DNN expression rate results from Section~4 in \cite{HornungJentzen2018} 
can also be deduced from Thm.\ref{prop:Ddresult},
here in the case when the probability measure used to quantify
the $L^p$-error in \cite{HornungJentzen2018} is compactly supported, 
as in that case the $L^\infty$-bounds proved here imply the 
$L^p$-bounds proved in \cite{HornungJentzen2018}.
\subsection{Exponential ReLU DNN expression rates via PIDE}
\label{sec:LevyPIDE}
We now extend the univariate case discussed
in Section \ref{sec:exp}, 
and prove an exponential expression rate bound similar to 
Proposition \ref{prop:Holom} for baskets of $d\geq 2$ 
L\'evy-driven assets. 
In this subsection we assume ReLU activation function $\varrho(x)=\max\{x,0\}$. 
As in Section \ref{sec:DNNProbab}, we admit
general correlation structure of the marginal processes' jumps.
To prove DNN expression rate bounds, 
we exploit once more
the fact that the stationarity and homogeneity 
of the $\R^d$-valued LP $X^d$ imply that 
the Kolmogorov equation \eqref{eq:MultdPDEx}
has constant coefficients.
Under the provision that in \eqref{eq:MultdPDEx} 
holds $v_d(0,\cdot)\in L^2(\IR^d)$,
this allows to write for every $\tau>0$
the Fourier transform $F_{x\to \xi}v_d(\tau,\cdot) = \hat{v}_d(\tau,\xi)$
as
\begin{equation}\label{eq:hatvdxi}
\hat{v}_d(\tau,\xi) = \exp\big(-\tau\psi(\xi)\big) \hat{v}_d(0,\xi) \;,\quad \xi\in \IR^d\;.
\end{equation}
Here, for $\xi \in \IR^d$ the symbol 
$\psi(\xi) = \exp(-ix^\top\xi) \cA(\partial_x) \exp(i x^\top\xi)$ 
with $\cA(\partial_x)$ denoting the constant coefficient spatial 
integrodifferential operator in \eqref{eq:MultdPDEx} by Courr\`{e}ge's 2nd Theorem 
(see, e.g., Applebaum \cite[Theorem~3.5.5]{Applebaum2009}), 
and \eqref{LK1} becomes 
\begin{equation}\label{LKd}
\IE[\exp(i\xi^\top X_\tau^d)] 
= 
\exp\big(-\tau\psi(\xi)\big)\;,\quad \xi \in \IR^d\;.
\end{equation}
In fact, $\psi$ can be expressed in terms of the characteristic triplet $(A^d,\gamma^d,\nu^d)$
of the LP $X^d$ as
\begin{equation}\label{eq:symbolD}
\psi(\xi) 
= 
\frac{1}{2} \xi^\top A^d \xi - i \xi^\top \gamma^d  - \int_{\R^d } 
\left(e^{i \xi^\top y}-1- i \xi^\top y \mathbbm{1}_{\{\|y\|\leq 1\}} \right) \nu^d(d y)\;,
\quad \xi \in \IR^d\;.  
\end{equation}
We impose again the strong ellipticity assumption \eqref{eq:StrEllPsi},
however now with $|\xi|$ understood as
$|\xi|^2 = \xi^\top \xi$ for $\xi\in \IR^d$.
Then reasoning exactly as in the proof of Proposition \ref{prop:Holom}
we obtain with $C_1>0$ as in \eqref{eq:StrEllPsi} 
for every $\tau>0$ for the variational solution $v_d$ 
of \eqref{eq:MultdPDEx} the bound
\begin{equation}\label{eq:L2Estd}
\forall k\in \IN_0:\quad 
\| (D^k_x v_d)(\tau,\cdot) \|_{L^2(\IR^d)}^2
\leq 
\left(\frac{k}{2\tau C_1 \rho e}\right)^{k/\rho}
\| v_d(0,\cdot) \|_{L^2(\IR^d)}^2
\;.
\end{equation}
Here, $D^k_x$ denotes any weak derivative of total order $k\in \IN_0$ 
with respect to $x\in \IR^d$.

With the Sobolev embedding theorem we again obtain
for any bounded cube $I^d = [x_-,x_+]^d\subset \IR^d$ with $-\infty < x_- < x_+ < \infty$,
and for every fixed $\tau>0$,
that there exist constants $C(d)>0$ and $A(\tau,\rho) > 0$ 
such that
\begin{equation}
\label{eq:utGevrxD}
\forall k\in \IN: \quad
\sup_{x \in I^d}|  (D^k_xv_d)(\tau,x) |
\leq
C(d) \big(A(\tau,\rho)\big)^k (k!)^{1/\min\{1, 2\rho\}} \;.
\end{equation}
The constant $C(d)$ is independent of $x_-,x_+$, 
but depends in general exponentially 
on the basket size (respectively the dimension) $d\geq 2$, 
and the constant $A(\tau,\rho) = (2\tau C_1\rho)^{-1/(2\rho)}$ 
denotes the constant from \eqref{eq:L2Estd} and Stirling's bound.
If $\rho =1$ (which corresponds to the case of non-degenerate diffusion) 
and 
if $\tau>0$ is sufficiently large (so that $(2 \tau C_1)^{1/(2\rho)} \geq 1$) 
then the constant is bounded uniformly w.r.\ to the dimension $d$ .

The derivative bound \eqref{eq:utGevrxD} implies that
$v_d(\tau,\cdot)|_{I^d}$ is Gevrey-$\delta$-regular
with $\delta = 1/\min\{1, 2\rho\}$.
In particular, for $\delta = 1$, i.e.\ when $\rho\geq 1/2$, 
for every fixed $\tau>0$, $x\mapsto v_d(\tau,x)$ is real analytic in $I^d$,
which is the case we consider first.

In this case, 
we perform an affine change of coordinates to transform $v_d(\tau,\cdot)$ to the real analytic function $[-1,1]^d \ni \hat{x} \mapsto v_c(\tau,\hat{x})$. This function admits a holomorphic extension to some open set $O \subset \IC^d$ containing $[-1,1]^d$. By choosing $\bar{\varrho} > 1$ (the ``semiaxis sums'') sufficiently close to $1$, we obtain that $\cE_{\bar{\varrho}} \subset O$, i.e., $v_c(\tau,\cdot)$ admits a holomorphic extension to $\cE_{\bar{\varrho}}$, where the Bernstein polyellipse $\cE_{\bar{\varrho}} \subset \IC^d$ is defined as $d$-fold Cartesian product of the Bernstein ellipse $\{ (z+z^{-1})/2 \colon z \in \IC , 1 \leq |z| < \bar{\varrho}\}$. More precisely,  
$x\mapsto v_d(\tau,x)$ admits, 
with respect to each co-ordinate $x_i \in [x_-,x_+]$ of $x$,
a holomorphic extension to an open neighborhood of $[x_-,x_+]$ in $\IC$ 
(see, e.g., Krantz and Parks \cite[Section~1.2]{krantz1992primer}).
By Hartogs' theorem (see, e.g., H\"ormander \cite[Theorem~2.2.8]{HormIntroSevCplx}),
for every fixed $\tau>0$, 
$x\mapsto v_d(\tau,x)$ admits a holomorphic extension to a 
polyellipse in $\IC^d$ with foci at $x_-,x_+$ or,
in normalized coordinates 
\begin{equation}\label{eq:NormCoord}
\hat{x}_i = \big(T^{-1}(x)\big)_i = 2[x_i - (x_- + x_+)/2] / (x_+-x_-),\quad i=1,...,d,
\end{equation}
the map $[-1,1]^d \ni \hat{x} \mapsto v_d(\tau,T(\hat{x})) = v_c(\tau,\hat{x})$
admits a holomorphic extension to a Bernstein polyellipse $\cE_{\bar{\varrho}}\subset \IC^d$
with foci at $\hat{x}_i = \pm 1$, 
and semiaxis sums $1<\bar{\varrho} = \cO(A(\tau,\rho)^{-1})$. 
As $\tau\mapsto A(\tau,\rho)^{-1}$ is increasing for every fixed value of $\rho$,
for $\rho \geq 1/2$ parabolic smoothing increases the domain of holomorphy with $\tau$.

In the general case $\delta = 1/\min\{1,2\rho\}$ 
with $\rho>0$ as in \eqref{eq:StrEllPsi},
ReLU DNN expression rates of multivariate holomorphic (if $\rho \geq 1/2$) 
and Gevrey regular (if $0<\rho<1/2$)
functions such as $ \hat{x} \mapsto v_c(\tau,\hat{x})$ 
have been studied in Opschoor et al.\ \cite{OSZ19_839}. 

The holomorphy or Gevrey-$\delta$ regularity 
of the map $ \hat{x} \mapsto {v_c}(\tau,\hat{x})$ implies, 
with Opschoor et al.\ \cite[Theorem 3.6, Proposition 4.1]{OSZ19_839}
that 
there exist constants $\beta'=\beta'(\bar{\varrho},d)>0$ and $C = C(u_d,\bar{\varrho},d) > 0$,
and for every $\cN\in\N$
there exists a ReLU DNN $\tilde u_\cN:[-1,1]^d\to\R$ such that
\begin{equation}\label{eq:findimsize}
M(\tilde{u}_\cN) \leq \cN , 
\qquad
L(\tilde{u}_\cN) \le C \cN^{\min\{\frac{1}{2}, \frac{1}{d+1/\delta}\}} \log(\cN) 
\end{equation}
and such that the error bound 
\begin{align}
\label{eq:findimrate}
\|{v_c}(\tau,\cdot) - \tilde{u}_{\cN}(\cdot)\|_{W^{1,\infty}([-1,1]^d)}
\leq C\exp\left( -\beta' \cN^{\min \{ \frac{1}{2\delta}, \frac{1}{\delta d+1}\}} \right) 
\end{align}
holds.
Reverting the affine change of variables \eqref{eq:NormCoord} in the input 
layer, we obtain the following result on the $\varepsilon$-complexity 
of the ReLU DNN expression error for $x\mapsto v_d(\tau,x)$ at fixed $0<\tau\leq T$.
\begin{theorem}\label{thm:DNNExprd}
	Assume that the symbol
	$\psi$ of the $\R^d$-valued LP $X^d$ 
	satisfies \eqref{eq:StrEllPsi} 
	with $|\xi|^2 = \xi^\top \xi$ and
	with some $\rho \in (0,1]$.
	
	Then,
	for every $\varphi_d$ with $v_d(0,\cdot) = \varphi_d \circ \exp  \in L^2(\R^d)$, 
	for every $\tau>0$, 
	on every closed, bounded hypercube 
	$I^d = [x_-,x_+]^d\subset \IR^d$ 
	and, respectively, 
	$J^d = [s_-,s_+]^d \subset (0,\infty)^d$ with $s_\pm = \exp(x_\pm)$ 
	the 
	variational solutions $v_d$ of the Kolmogorov PIDE \eqref{eq:MultdPDEx} at $\tau$
	and $u_d(\tau,s) = v_d(\tau, \log(s))$
	can be expressed on $I^d$, $J^d$ 
	by ReLU DNNs $\tilde{v}_{d,\varepsilon}$, $\tilde{u}_{d,\varepsilon}$
	at exponential rate.
	
	Specifically, 
	there exists a constant 
	$C=C(x_-,x_+,\delta,d,\tau)> 0$ 
	such that, with $\delta = 1/\min\{ 1, 2\rho \} \geq 1$,  
	for every $0<\varepsilon \leq 1/2$ 
	exist ReLU DNNs $\tilde{v}_{d,\varepsilon}$, $\tilde{u}_{d,\varepsilon}$
	for which there holds
	$$
	\sup_{x \in I^d}| v_d(\tau,x) - \Rl(\tilde{v}_{d,\varepsilon})(x) |
	,\;\;
	\sup_{s \in J^d}| u_d(\tau,s) - \Rl(\tilde{u}_{d,\varepsilon})(s) |
	\leq \varepsilon\;,
	$$
	and, 
	\[\begin{aligned}
	M(\tilde{v}_{d,\varepsilon}) + M(\tilde{u}_{d,\varepsilon})
	& \leq 
	C |\log(\varepsilon)|^{\max\{ 2\delta, \delta d+1\} }\;,
	\\  
	L(\tilde{v}_{d,\varepsilon}) + L(\tilde{u}_{d,\varepsilon})
	& \leq 
	C |\log(\varepsilon)|^\delta |\log(|\log(\varepsilon)|)|\;.
	\end{aligned} \]
	Here, the constants $C = C(\delta,d,\tau) > 0$ depend on $I$ and $J$ and,
	generally, exponentially on the basket size $d$.
\end{theorem}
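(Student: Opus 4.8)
The plan is to establish Gevrey-$\delta$ regularity of the map $x\mapsto v_d(\tau,x)$ on $I^d$ and of $s\mapsto u_d(\tau,s)$ on $J^d$ for every fixed $\tau>0$, and then to invoke the ReLU DNN expression rate results for holomorphic and Gevrey functions on hypercubes from Opschoor et al.\ \cite{OSZ19_839}, inverting the resulting exponential error bound to read off the claimed $\varepsilon$-complexity. Since the regularity analysis has essentially been carried out in the discussion preceding the statement, the proof is largely a matter of assembling these pieces and tracking the dependence of the exponents and the constants. First I would recall, from the Fourier representation \eqref{eq:hatvdxi} and the strong ellipticity assumption \eqref{eq:StrEllPsi} (now with $|\xi|^2=\xi^\top\xi$), the weak-derivative bound \eqref{eq:L2Estd}, obtained exactly as in the proof of Proposition~\ref{prop:Holom} via Parseval's identity together with the elementary maximisation \eqref{eq:maxexp} and the rough Stirling bound $k^k\le k!e^k$. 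The Sobolev embedding theorem then upgrades this to the pointwise bound \eqref{eq:utGevrxD}, which is precisely the assertion that $v_d(\tau,\cdot)|_{I^d}$ is Gevrey-$\delta$ regular with $\delta=1/\min\{1,2\rho\}$. For $u_d(\tau,s)=v_d(\tau,\log s)$ on $J^d$ I would use that $s\mapsto\log s$ is a real-analytic diffeomorphism of $J^d$ onto $I^d$ and that Gevrey-$\delta$ regularity is preserved under analytic changes of variables (cf.\ \cite[Proposition 1.4.6]{Rodino}), so $u_d(\tau,\cdot)|_{J^d}$ is Gevrey-$\delta$ with the same index.

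Next I would reduce to the reference cube $[-1,1]^d$ via the affine normalisation \eqref{eq:NormCoord}, under which Gevrey regularity (and, when $\rho\geq 1/2$, holomorphy on a Bernstein polyellipse $\cE_{\bar\varrho}$ obtained from the coordinatewise holomorphic extensions together with Hartogs' theorem \cite[Theorem 2.2.8]{HormIntroSevCplx}) is retained. I then apply \cite[Theorem 3.6, Proposition 4.1]{OSZ19_839}: for every $\cN\in\IN$ there is a ReLU DNN $\tilde u_\cN$ on $[-1,1]^d$ satisfying the size and depth bounds \eqref{eq:findimsize} and the error bound \eqref{eq:findimrate}. This treats both regimes simultaneously, since those results are stated for Gevrey-$\delta$ functions and specialise to the holomorphic case $\delta=1$.

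It then remains to invert \eqref{eq:findimrate}. Requiring $C\exp(-\beta'\cN^{\min\{1/(2\delta),\,1/(\delta d+1)\}})\le\varepsilon$ forces the choice $\cN=\cO(|\log(\varepsilon)|^{\max\{2\delta,\,\delta d+1\}})$ (for $\varepsilon$ small, $|\log(\varepsilon)|$ dominates the additive $\log C$), whence $M(\tilde v_{d,\varepsilon})\le\cN\le C|\log(\varepsilon)|^{\max\{2\delta,\,\delta d+1\}}$. Substituting this $\cN$ into the depth bound in \eqref{eq:findimsize} and using the algebraic identity $\max\{2\delta,\,\delta d+1\}\cdot\min\{\tfrac12,\tfrac{1}{d+1/\delta}\}=\delta$ (which for $d\geq 2$ reads $(\delta d+1)\cdot\tfrac{\delta}{\delta d+1}=\delta$) gives $L(\tilde v_{d,\varepsilon})\le C|\log(\varepsilon)|^{\delta}|\log(|\log(\varepsilon)|)|$. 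Reverting the affine map \eqref{eq:NormCoord} in the input layer adds only $\cO(d)$ weights and one layer, and so does not affect the asymptotics; the identical construction applied to the Gevrey function $u_d(\tau,\cdot)$ on $J^d$ produces $\tilde u_{d,\varepsilon}$ with the same bounds, yielding the stated estimates for the sums $M(\tilde v_{d,\varepsilon})+M(\tilde u_{d,\varepsilon})$ and $L(\tilde v_{d,\varepsilon})+L(\tilde u_{d,\varepsilon})$.

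The hard part will not be the DNN emulation, which is a black-box invocation of \cite{OSZ19_839}, but the quantification of the regularity so that the hypotheses of those results hold with explicit, $\tau$-dependent constants. Concretely, in the analytic case $\rho\geq 1/2$ one must verify that the derivative bound \eqref{eq:utGevrxD} furnishes a genuine joint holomorphic extension to a Bernstein polyellipse whose semiaxis sum $\bar\varrho=\cO(A(\tau,\rho)^{-1})$ is strictly larger than $1$, and in the Gevrey case $0<\rho<1/2$ one must confirm that the constant $C(d)$ in \eqref{eq:utGevrxD}, which originates from the $d$-dimensional Sobolev embedding, is precisely the source of the exponential-in-$d$ dependence flagged in the statement. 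Honestly tracking this $d$-dependence is what distinguishes this curse-of-dimensionality-prone bound from Theorem~\ref{prop:Ddresult}, and it is the one place where care, rather than routine computation, is required.
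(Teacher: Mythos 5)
Your proposal is correct, and for the network $\tilde{v}_{d,\varepsilon}$ it coincides with the paper's proof: Gevrey-$\delta$ bounds via Parseval, \eqref{eq:maxexp} and Stirling (i.e.\ \eqref{eq:L2Estd}, \eqref{eq:utGevrxD}), followed by the black-box emulation \eqref{eq:findimsize}--\eqref{eq:findimrate} from Opschoor et al.\ and the inversion $\cN \simeq |\log(\varepsilon)|^{\max\{2\delta,\delta d+1\}}$; your exponent arithmetic $\max\{2\delta,\delta d+1\}\cdot\min\{\tfrac12,\tfrac{1}{d+1/\delta}\}=\delta$ checks out in both regimes $d=1$ and $d\geq 2$. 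Where you genuinely diverge is in the construction of $\tilde{u}_{d,\varepsilon}$. You argue that $u_d(\tau,\cdot)=v_d(\tau,\log(\cdot))$ is itself Gevrey-$\delta$ on $J^d$, since Gevrey classes are preserved under real-analytic changes of variables, and then apply the emulation theorem directly to $u_d$ — this is exactly the strategy the paper uses in the univariate case (Proposition~\ref{prop:Holom}), and it is valid in the multivariate setting as well. The paper's proof of Theorem~\ref{thm:DNNExprd} instead keeps all regularity analysis in the log-coordinates and handles $u_d$ by \emph{composition}: it builds a separate ReLU DNN $\widetilde{\log}_\cN$ approximating the componentwise logarithm at exponential rate (again via \cite[Theorem~3.6]{OSZ19_839}, cf.\ \eqref{eq:findimratelog}), sets $\tilde{u}_{d,\varepsilon} = \tilde{v}_{d,\varepsilon}\circ\widetilde{\log}_\cN$, and propagates the error through the triangle inequality using the $W^{1,\infty}$-norm of $v_d(\tau,\cdot)$ on the enlarged compact set $\widetilde{I^d}$, concluding via additivity of size and depth under composition. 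Your route buys a single application of the emulation theorem with no error-propagation argument, at the price of needing the multivariate Gevrey-preservation statement and re-deriving derivative constants on $J^d$; the paper's route avoids any regularity analysis in the $s$-variables but must control the Lipschitz constant of $v_d$ on $\widetilde{I^d}$ and pay an extra log-network. One small inaccuracy, harmless to the asymptotics: reverting the affine normalisation \eqref{eq:NormCoord} does not add a layer — an affine map precomposed with the first layer's affine transformation is absorbed into that layer's weights.
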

\begin{proof} 
	The asserted bounds for $\Rl(\tilde{v}_{d,\varepsilon})$ 
	follow by elementary manipulations 
	from insisting that the expression error bound
	\eqref{eq:findimrate} equal $\varepsilon \in (0,1/2]$ 
	and subsequently inserting the resulting expression 
	$
	\cN \simeq | \log(\varepsilon) |^{\max\{ 2\delta, \delta d+1 \}}
	$
	into the bounds \eqref{eq:findimsize} for the DNN size and depth.
	
	The bounds for $\Rl(\tilde{u}_{d,\varepsilon})$ are then deduced
	from those for $\Rl(\tilde{v}_{d,\varepsilon})$ and the fact that
	the transformation $\log(\cdot): J^d \to I^d$ 
	(understood component-wise) is real analytic. Hence, it admits 
	a holomorphic extension to an open neighbourhood of $J^d$ in $\C^d$.
	Then Opschoor et al.\ \cite[Theorem~3.6]{OSZ19_839}, 
	combined with the affine transformation $T: [-1,1]^d \to J^d$, 
	implies that there are constants $C,\beta'>0$ 
	such that for every $\cN \in \IN$
	exists a ReLU DNN $\widetilde{\log}_\cN$ 
	such that
	\begin{equation*}
	M\big(\widetilde{\log}_\cN(\cdot)\big) \leq \cN
	, \qquad
	L\big(\widetilde{\log}_\cN(\cdot)\big) \le C \cN^{\frac{1}{d+1}} \log_2(\cN)
	\end{equation*}
	and the error bound
	\begin{equation} \label{eq:findimratelog}
	\normc[{W^{1,\infty}([-1,1]^d)}]{(\log\circ T)(\cdot) - \Rl(\widetilde{\log}_{\cN})\circ T(\cdot)}
	\leq 
	C\exp\left( -\beta' \cN^{\frac{1}{d+1}} \right)\;.
	\end{equation}
	For every $\cN\in\N$ 
	the set $\widetilde{I^d} = \Rl(\widetilde{\log_\cN})(J^d)\cup \log(J^d) \subset (-\infty,\infty)^d$
	is compact due to \eqref{eq:findimratelog}.
	For given $\varepsilon \in (0,1/2]$, we choose $\cN\in\N$ as before. 
	Using that $\cN^{\min\{\frac{1}{2\delta},\frac{1}{\delta d+1}\}} \leq \cN^{\frac{1}{d+1}}$, 
	this choice guarantees that in \eqref{eq:findimratelog} it holds that 
	$C\exp( -\beta' \cN^{\frac{1}{d+1}}) \leq \varepsilon$.
	Then we define 
	$\widetilde{u}_d(\tau,\cdot) = \Rl(\tilde{v}_{d,\varepsilon})(\cdot) \circ \Rl(\widetilde{\log_\cN})(\cdot)$
	and estimate
	$$
	\begin{array}{rcl}
	\sup_{s \in J^d}| u_d(\tau,s) & - & \widetilde{u}_d(\tau,s) |
	\\ & = & \displaystyle
	\sup_{s \in J^d}| v_d(\tau,\cdot) \circ \log(s) - \widetilde{u}_d(\tau,s)  |
	\\
	& \leq & \displaystyle 
	\sup_{s \in J^d}|  v_d(\tau,\cdot) \circ \log(s) -  v_d(\tau,\cdot) \circ \Rl(\widetilde{\log_\cN})(s) |
	\\
	& & \displaystyle 
	+ \sup_{s \in J^d}|  v_d(\tau,\cdot) \circ \Rl(\widetilde{\log_\cN})(s) 
	- \Rl(\tilde{v}_{d,\varepsilon})(\cdot) \circ \Rl(\widetilde{\log_\cN})(s) | 
	\\
	& \leq & \displaystyle
	\| v_d(\tau,\cdot) \|_{W^{1,\infty}(\widetilde{I^d})} 
	\sup_{s \in J^d}| \log(s) - \Rl(\widetilde{\log}_{\cN})(s) |
	\\
	& & \displaystyle
	+ \sup_{x \in \widetilde{I^d}}|  v_d(\tau,x) 
	- \Rl(\tilde{v}_{d,\varepsilon})(x) |
	\\
	& \leq & \displaystyle
	C \varepsilon \;.
	\end{array}
	$$
	Since the DNN size and DNN depth are additive under composition of ReLU DNNs,
	the assertion for $\tilde{u}_{d,\varepsilon}$ follows (possibly adjusting the value
	of the constant $C$).
\end{proof}
\begin{remark}
	Some sufficient conditions on the characteristic triplet 
	$(A^d,\gamma^d,\nu^d)$ that ensure \eqref{eq:StrEllPsi} 
	in the multivariate setting are as follows.
	Consider first the case 
	when the diffusion component is non-degenerate, i.e. 
	$A^d$ is positive definite. 
	Then
	\[ 
	\begin{aligned}
	\Re \psi(\xi) & = \frac{1}{2} \xi^\top A^d \xi  - \int_{\R^d } 
	\left(\cos(\xi^\top y)-1\right) \nu^d(d y) 
	\geq C_1 | \xi |^{2}
	\\ | \psi(\xi) | & \leq \frac{1}{2} |\xi^\top A^d \xi| + |\xi^\top \gamma^d|  + \int_{\{\|y\|\leq 1\}} 
	\left|e^{i \xi^\top y}-1- i \xi^\top y \right| \nu^d(d y)
	\\ & \quad \quad + 2 \int_{\{\|y\|> 1\}} \nu^d(d y) \leq C_2 | \xi |^{2} + C_3
	\end{aligned}
	\] 
	for suitable choices of $C_1,C_2,C_3 > 0$. 
	\newline
	In the case when $A^d$ is not positive definite, 
	we refer for instance to Eberlein and Glau \cite[Section~7]{EberleinGlauVarSlnLevyPIDEs} 
	and Hilber et al.\ \cite[Lemma~14.5.1]{CMQFBook} for sufficient conditions. 
\end{remark}
\subsection{Breaking the Curse of Dimensionality}
\label{sec:LevyCrsD}
The result Theorem \ref{prop:Ddresult} demonstrated 
$\eps$ expression error for DNNs whose depth and size 
are bounded polynomially in terms of $\eps^{-1}d$,
for European style options in multivariate, exponential L\'evy models. 
In particular, in Theorem \ref{prop:Ddresult} 
the curse of dimensionality was proved to be overcome for a market model with jumps:
a DNN expression rate was shown that is algebraic 
in terms of the target accuracy $\eps>0$ 
with constants that depend polynomially on the dimension $d$.
The rates $\mathfrak{p},\mathfrak{q} \in [0,\infty)$ can be read
off the proof of Theorem \ref{prop:Ddresult}; however, these constants could
be large, thereby affording only low DNN expression rates.

Theorem \ref{thm:DNNExprd}, 
on the other hand, stated \emph{exponential expressivity} 
of deep ReLU NNs, i.e.\  maximum expression error at time $\tau>0$ 
with accuracy $\varepsilon > 0$ can be attained 
by a deep ReLU NN of size and depth which grow polylogarithmically
with respect to $|\log(\varepsilon)|$. 
This exponential expression rate bound was, however, 
still prone to the curse of dimensionality.

In the present section we further address alternative mathematical arguments on
how DNNs can overcome the CoD in the presently considered
jump-diffusion models. Specifically,
two mathematical arguments in addition to the probabilistic
arguments in Section \ref{sec:DNNProbab} are presented.
Both exploit stationarity of the LP $X^d$
which implies \eqref{eq:hatvdxi}, \eqref{LKd}, to 
obtain DNN expression rates free from the curse of dimensionality.
\subsubsection{Barron Space Analysis} 
\label{sec:Barron}
The first alternative approach to Theorem \ref{prop:Ddresult} is based on verifying, 
using \eqref{eq:hatvdxi}, \eqref{LKd}, 
regularity of option prices in the so-called \emph{Barron space} 
introduced in the fundamental work Barron \cite{Barron1993}. 
It will provide DNN expression error
bounds with explicit values for $\mathfrak{p}$ and $\mathfrak{q}$, however, 
\emph{in \cite{Barron1993} only for DNNs with sigmoidal activation functions $\varrho$}; 
similar
results for ReLU activations are asserted in E and Wojtowytsch \cite{e2020observations}.
For simplicity, we consider here a subset $\cB$ of Barron space. 
An integrable function $f:\IR^d\to \IR$ belongs to $\cB$ if 
\begin{equation}\label{eq:DefB1}
\|f\|_{\cB} = \int_{\IR^d}  |\xi| |\hat{f}(\xi)| d\xi < \infty\;.
\end{equation}
The explicit appearance of the Fourier transform $\hat{f}$ renders the norm $\| \circ \|_{\cB}$
in \eqref{eq:DefB1} particularly suitable for our purposes due to \eqref{eq:hatvdxi}-\eqref{eq:symbolD}.
As was pointed out in \cite{Barron1993,e2020observations},
the relevance of the Barron norm 
$\| \circ \|_{\cB}$ stems from it being sufficient
for dimension-robust DNN approximation rates.
For $m\in \IN$, consider the two-layer neural networks 
$f_m$ which are given by
\begin{equation}\label{eq:2LNN}
f_m:\IR^d\to \IR, \, x\mapsto \frac{1}{m} \sum_{i=1}^m a_i \varrho(w_i^\top x + b_i)
\end{equation}
with parameters $(a_i,w_i,b_i)\in \IR \times \IR^d \times \IR$.
Their relevance stems from the following result:  
assume that $\varrho$ is sigmoidal, i.e., 
bounded, measurable 
and $\varrho(z) \to 1$ as $z \to \infty$, $\varrho(z) \to 0$ as $z \to -\infty$. 

Then,
for $f\in \cB$ and for every $R>0$, $d\in \IN$, 
and for every $m\in \IN$ 
exist parameters $\{ (a_i,w_i,b_i) \}_{i=1}^m$ such that 
for the corresponding DNN $f_m$ as in \eqref{eq:2LNN} holds
\begin{equation}\label{eq:BDNNErr}
\begin{array}{c}
\| f - f_m \|_{L^2([-R,R]^d;\pi)} \leq \max\{1,R\} m^{-1/2} \| f \|_{\cB}\;.
\end{array}
\end{equation}
Here, $\pi$ denotes a probability measure charging $[-R,R]^d$.
The bound \eqref{eq:BDNNErr} follows from \cite[Theorem~1]{Barron1993}, 
and was generalized in \cite[Eqn. (1.3)]{e2020observations} 
to ReLU activation.

The bound in \eqref{eq:BDNNErr} is free from the CoD: 
the number $N$ of parameters in the DNN grows as $\cO(md)$ so that 
$m^{-1/2} \leq CN^{-1/2} d^{1/2}$ with absolute constant $C>0$.

With \eqref{eq:hatvdxi}, \eqref{LKd}, for every $\tau\geq 0$,
sufficient conditions for $x\mapsto v_d(\tau,x)$ to belong to $\cB$
can be verified. 
With \eqref{eq:BDNNErr}, DNN mean square expression rate bounds 
of option prices that are free from the CoD follow.
\begin{proposition}\label{prop:BarronLevy}
	Assume that $\varrho = \ReLU$.
	Assume furthermore that
	the payoff in log-variables, $v_d(0,\cdot)$, belongs to $\cB$.
	
	Then, for every $\tau\geq 0$, the price $x\mapsto v_d(\tau,x)$ 
	can be expressed by a NN $x\mapsto \tilde{v}_d(\tau,x)$ 
	of depth $2$ and size $m(d+2)$ with 
	$m\in \IN$ and error bound
	\begin{equation*}
	\| v_d(\tau,\cdot) - \tilde{v}_d(\tau,\cdot) \|_{L^2([-R,R]^d{;\pi})} 
	\leq 
	\max\{1,R\} m^{-1/2} \| v_d(0 , \cdot) \|_{\cB}
	\;. 
	\end{equation*}
	Here, $\pi$ denotes a probability measure on $[-R,R]^d$.
\end{proposition}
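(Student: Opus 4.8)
The plan is to reduce the statement entirely to the Barron approximation bound \eqref{eq:BDNNErr} applied to the target function $f = v_d(\tau,\cdot)$; the only substantive work is to verify that $v_d(\tau,\cdot)$ lies in $\cB$ and that its Barron norm is dominated by that of the initial datum. As a preliminary I would record that $\cB \subset L^2(\IR^d)$: any $f \in \cB$ is integrable, so $\hat{f}$ is bounded, and since $\|f\|_{\cB} < \infty$ forces $\int_{\{|\xi|\geq 1\}} |\hat{f}(\xi)|\,d\xi < \infty$ while boundedness of $\hat{f}$ on the finite-measure set $\{|\xi|<1\}$ handles the low frequencies, one gets $\hat{f}\in L^1$ and hence $f \in L^1\cap L^\infty \subset L^2$. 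In particular $v_d(0,\cdot)\in\cB$ guarantees $v_d(0,\cdot)\in L^2(\IR^d)$, so the Fourier representation \eqref{eq:hatvdxi} of the variational solution is available.

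The key step is a Barron-norm contraction under the solution semigroup. Using \eqref{eq:hatvdxi} I would write
\begin{equation*}
\|v_d(\tau,\cdot)\|_{\cB}
= \int_{\IR^d} |\xi|\, \big|\exp(-\tau\psi(\xi))\big|\, |\hat{v}_d(0,\xi)|\,d\xi .
\end{equation*}
The crucial observation is that, by \eqref{LKd}, the multiplier $\exp(-\tau\psi(\xi)) = \IE[\exp(i\xi^\top X_\tau^d)]$ is the characteristic function of $X_\tau^d$, whence $|\exp(-\tau\psi(\xi))|\leq 1$ for all $\xi\in\IR^d$ and all $\tau\geq 0$. Therefore
\begin{equation*}
\|v_d(\tau,\cdot)\|_{\cB}
\leq \int_{\IR^d} |\xi|\,|\hat{v}_d(0,\xi)|\,d\xi
= \|v_d(0,\cdot)\|_{\cB} < \infty .
\end{equation*}
The same estimate (dropping the factor $|\xi|$ on $\{|\xi|\geq 1\}$) together with $\hat{v}_d(0,\cdot)\in L^\infty$ on $\{|\xi|<1\}$ shows $\hat{v}_d(\tau,\cdot)\in L^1$, so $v_d(\tau,\cdot)$ agrees with its Fourier inversion and genuinely belongs to $\cB$.

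With $v_d(\tau,\cdot)\in\cB$ in hand, I would invoke the ReLU version of \eqref{eq:BDNNErr} (the form established in \cite{e2020observations}) with $f=v_d(\tau,\cdot)$: for every $m\in\IN$ there exist parameters $\{(a_i,w_i,b_i)\}_{i=1}^m$ such that the two-layer network $\tilde{v}_d(\tau,\cdot)$ of the form \eqref{eq:2LNN} satisfies
\begin{equation*}
\|v_d(\tau,\cdot)-\tilde{v}_d(\tau,\cdot)\|_{L^2([-R,R]^d;\pi)}
\leq \max\{1,R\}\, m^{-1/2}\,\|v_d(\tau,\cdot)\|_{\cB}
\leq \max\{1,R\}\, m^{-1/2}\,\|v_d(0,\cdot)\|_{\cB},
\end{equation*}
which is exactly the asserted bound. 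The architecture is then read off \eqref{eq:2LNN}: the network has $L=2$ layers, its first layer contributes at most $m(d+1)$ weights (the vectors $w_i\in\IR^d$ together with the biases $b_i$) and the output layer contributes at most $m$ weights (the coefficients $a_i/m$), for a total size $\leq m(d+2)$.

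This argument has essentially no technical obstacle; its one piece of genuine content is the contraction $\|v_d(\tau,\cdot)\|_{\cB}\leq\|v_d(0,\cdot)\|_{\cB}$, which rests entirely on recognizing $\exp(-\tau\psi(\xi))$ as a characteristic function via \eqref{LKd} and hence of modulus at most one. This is precisely why the Fourier-based Barron norm \eqref{eq:DefB1} is the natural regularity class here: the solution semigroup acts as a Fourier multiplier of modulus $\leq 1$ and so can never increase it. The only point requiring mild care is the measure-theoretic bookkeeping that places $v_d(\tau,\cdot)$ in $\cB$ for $\cB$-valued (rather than merely $L^2$-valued) initial data, which is dispatched by the integrability argument above.
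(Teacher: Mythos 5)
Your proposal is correct and follows essentially the same route as the paper's proof: the core step in both is the Barron-norm contraction $\| v_d(\tau,\cdot) \|_{\cB} \leq \| v_d(0,\cdot) \|_{\cB}$, obtained from \eqref{eq:hatvdxi} and \eqref{LKd} by noting that $\exp(-\tau\psi(\xi))$ is a characteristic function and hence of modulus at most one (the paper phrases this equivalently as $\Re\psi_{X^d}(\xi)\geq 0$), followed by an application of the ReLU version of \eqref{eq:BDNNErr}. Your additional bookkeeping (the embedding $\cB\subset L^2(\IR^d)$ justifying \eqref{eq:hatvdxi}, the verification that $v_d(\tau,\cdot)$ genuinely lies in $\cB$, and the explicit weight count yielding size $m(d+2)$) is a welcome refinement of details the paper leaves implicit, not a different argument.
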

\begin{proof} 
	We observe that for every $\xi\in \IR^d$ the identity \eqref{LKd} with $\tau=1$ shows that 
	\[ 
	\exp\big(- \Re \psi_{X^d}(\xi)\big)= |\exp\big(-\psi_{X^d}(\xi)\big)| = |\IE[\exp(i\xi^\top X_1^d)] | \leq 1
	\]
	and therefore $\Re \psi_{X^d}(\xi) \geq 0$. 
	From \eqref{eq:hatvdxi}, \eqref{LKd} we obtain for $\tau\geq 0$
	$$
	\forall \xi\in \IR^d: \; 
	|\hat{v}_d(\tau,\xi)| 
	= 
	|\exp\big(-\tau\psi_{X^d}(\xi)\big)\hat{v}_d(0,\xi)|
	=  
	\exp\big(-\tau\Re \psi_{X^d}(\xi)\big)|\hat{v}_d(0,\xi)|.
	$$
	The payoff in log-price, $v_d(0,\xi)$, belonging to $\cB$ 
	implies $\| v_d(0,\cdot) \|_{\cB} <\infty$. 
	Using $\Re \psi_{X^d}(\xi) \geq 0$, we find
	for every $\tau \geq 0$, $\xi\in \IR^d$ that
	$|\hat{v}_d(\tau,\xi)| \leq |\hat{v}_d(0,\xi)|$.
	This implies that for every $\tau \geq 0$, 
	$\| v_d(\tau,\cdot) \|_{\cB} \leq \| v_d(0,\cdot) \|_{\cB}$.
	The approximation bound \eqref{eq:BDNNErr} implies the assertion.
\end{proof}
Pointwise, $L^\infty$-norm error bounds can be obtained using \cite[Eqn. (1.4)]{e2020observations}.
\subsubsection{Parabolic Smoothing and Sparsity of Chaos Expansions}
\label{sec:ParSmoot}
The second non-probabilistic approach to Theorem \ref{prop:Ddresult} 
towards DNN expression error rates not subject to the CoD
is based on dimension-explicit derivative
bounds of option prices, which allow in turn to establish 
summability bounds for generalized polynomial chaos (gpc for short)
expansions of these prices. 
Good summability of gpc coefficient sequences is well known
to imply high, dimension-independent rates of approximation
by sparse, multivariate polynomials.
This, in turn, implies corresponding expression rates 
by suitable DNNs Schwab and Zech \cite[Theorem~3.9]{SZ19_2592}. 
Key in this approach is to exploit \emph{parabolic smoothing}
of the Kolmogorov PDE. The corresponding dimension-independent 
expression rate results will generally be higher 
than those based on probabilistic or 
Barron space analysis, but will hold only 
\emph{for sufficiently large $\tau>0$}.

We start by discussing
more precisely the dependence of the constants
in the proof of Theorem \ref{thm:DNNExprd} on the dimension $d$.
\begin{remark}  \label{rmk:Crseofd-2}
	The constant $C(d)=C(\tau,d)$ in the derivative bound \eqref{eq:utGevrxD}
	need not be exponential in $d$. 
	To see it, we bound \eqref{eq:utGevrxD} by the 
	inverse Fourier transform and the Cauchy-Schwarz inequality. 
	For 
	$\alpha \in \IN_0^d$ with $|\alpha| = \sum_{i=1}^d \alpha_i = k$,
	we find with the Cauchy-Schwarz inequality
	and with the lower bound \eqref{eq:StrEllPsi}
	\begin{equation*}
	\begin{aligned}
	\sup_{x \in I^d}|  & (D^\alpha_x v_d)  (\tau,x) |
	\\ & = \sup_{x \in I^d}
	\left|
	\frac{1}{(2\pi)^{d/2}} \int_{\R^d} 
	(i\xi)^\alpha \exp(i x^\top \xi) \exp\big(-\tau\psi(\xi)\big) \hat{v}_d(0,\xi) d \xi 
	\right|
	\\
	& \leq \frac{1}{(2\pi)^{d/2}} \int_{\R^d} |\xi|^k |\exp\big(-\tau\psi(\xi)\big)| |\hat{v}_d(0,\xi)| d \xi 
	\\ 
	& \leq \frac{1}{(2\pi)^{d/2}} \left( \int_{\R^d} \exp(-2 \tau C_1 |\xi|^{2\rho}) d \xi \right)^{1/2}  
	\\ & \quad \quad \cdot \left( \int_{\R^d} |\xi|^{2k} \exp(-2 \tau C_1 |\xi|^{2\rho}) |\hat{v}_d(0,\xi)|^2 d \xi \right)^{1/2}
	.
	\end{aligned}
	\end{equation*}
	The last factor can be bounded precisely by the square-root of the right hand side of 
	\eqref{eq:L2Estd} (by using  \eqref{eq:maxexp}).
	Using $k^k \leq k!e^k$ we obtain the bound \eqref{eq:utGevrxD} 
	as
	\begin{equation}\label{eq:auxEq30}
	\sup_{x \in I^d}|  (D^\alpha_x v_d)(\tau,x) |
	\leq
	C(d,\tau) \big(A(\tau,\rho)\big)^k (k!)^{1/\min\{1, 2\rho\}} \| v_d(0,\cdot) \|_{L^2(\IR^d)} \;\end{equation}
	with constant
	$A(\tau,\rho) =  (2\tau C_1\rho)^{-1/(2\rho)}$ 
	and the explicit constant
	\begin{align}
	C(d,\tau)
	&  \nonumber
	= \frac{1}{(2\pi)^{d/2}} \left( \int_{\R^d} \exp(-2 \tau C_1 |\xi|^{2\rho}) d \xi \right)^{1/2} 
	\\ \nonumber& = 
	\frac{1}{(2\pi)^{d/2}} \left(2 \frac{\pi}{d} \omega_d \int_0^\infty r^{d-1} \exp(-2 \tau C_1 r^{2\rho}) d r \right)^{1/2}
	\\  \nonumber
	& 
	= \frac{1}{(2\pi)^{d/2}} 
	\left( \frac{ \pi}{ \rho d} \frac{1}{(2\tau C_1)^{d/(2\rho)}} \omega_d \Gamma(\frac{d}{2\rho})
	\right)^{1/2} 
	\\ & =
	\frac{1}{(2\pi)^{d/2}} 
	\left( \frac{ \pi}{ \rho d} \frac{1}{(2\tau C_1)^{d/(2\rho)}} \frac{\pi^{d/2} 
		\Gamma(\frac{d}{2\rho})}{\Gamma(\frac{d}{2}+1)}\right)^{1/2},
	\label{eq:Cexplicit}
	\end{align} 
\end{remark}
where $\omega_d$ denotes the volume of the unit ball in $\IR^d$.
Inspecting the constant $C(d,\tau)$ in \eqref{eq:Cexplicit}, 
we observe that e.g.\ for 
$\rho = 1$ and $\tau_0 = \tau_0(C_1) = 1/(8\pi C_1)$, 
$\tau \geq \tau_0 > 0$ sufficiently large implies that
the constant $C(d,\tau)$ is bounded independent of $\tau$ and $d$.
\begin{remark}\label{rmk:Crseofd-1}
	In certain cases, the parabolic smoothing 
	implied by the ellipticity assumption \eqref{eq:StrEllPsi} on the generator $\cA$
	entails that the constant $C$ in the regularity estimates 
	\eqref{eq:utGevrxD}
	grows only polynomially with respect to $d$. 
	For instance, in  Remark~\ref{rmk:Crseofd-2} we provided 
	sufficient conditions which ensure that the constant  $C$ in the regularity estimates 
	\eqref{eq:utGevrxD} is even bounded with respect to $d$.
	This allows to derive an explicit and dimension-independent
	bound on the series of Taylor coefficients. 
	This, in turn, allows to obtain bounds on the constant in \eqref{eq:findimrate}
	which scale polynomially with respect to $d$.
	Consider, for example, $\rho=1$ (i.e.\ non-degenerate diffusion) 
	and \emph{assume that $\tau>0$ is sufficiently large}:
	specifically, $(2 \tau C_1)^{1/(2\rho)} \geq 1$ and $ d A(\tau,\rho) < 1$, 
	where $A(\tau,\rho) = (2\rho \tau C_1)^{-1/(2\rho)}$ 
	denotes the constant in large parentheses of \eqref{eq:GevEst}.
	This holds if
	\begin{equation}\label{eq:t>}
	\tau > \frac{d^{2\rho}}{2\rho C_1} \;.
	\end{equation}
	With \eqref{eq:t>} and 
	using $ \sum_{\alpha \in \N_0^d, |\alpha|=k} {k \choose \alpha} = d^k$,
	we may estimate with the multinomial theorem
	\begin{align*}
	\sum_{\alpha \in \N_0^d}
	\frac{ \sup_{x \in I^d}|  (D^\alpha_xv_d)(\tau,x) |}{\alpha!} 
	&\leq 
	C(d,\tau) \sum_{\alpha \in \N_0^d} \frac{A(\tau,\rho)^{|\alpha|} (|\alpha|)!}{\alpha!} 
	\\
	&= 
	C(d,\tau) \sum_{k=0}^\infty \big(d A(\tau,\rho)\big)^{k} = C(d,\tau) \frac{1}{1-d A(\tau,\rho)}. 	
	\end{align*}
	By Remark \ref{rmk:Crseofd-2},
	\eqref{eq:t>} implies that $C(d,\tau)$ in \eqref{eq:Cexplicit} 
	is bounded uniformly with respect to $d$. 
	Thus, in this case one may obtain bounds on the constant in \eqref{eq:findimrate} 
	which scale polynomially with respect to $d$. 
	However, the DNN size still grows polylogarithmically with respect to the dimension $d$, 
	in terms of $|\log(\varepsilon)|$ (i.e., at least as $\cO(|\log(\varepsilon)|^d)$),
	so that the curse of dimensionality is not overcome.
\end{remark}
The constant $C>0$ in the exponential expression rate bounds established
in Theorem \ref{thm:DNNExprd}
depends in general exponentially on the basket size $d$, 
resp.\ on the dimension of the 
solution space of the PIDE \eqref{eq:MultdPDEx}, due to the reliance on the
ReLU DNN expression rate analysis in Opschoor et al.\ \cite{OSZ19_839}.
Furthermore, the DNN size grows polylogarithmically with respect to 
the dimension $d$, in terms of $|\log(\varepsilon)|$.
Considering exponential expression rate bounds, 
this exponential dependence on $d$ in terms of $|\log(\varepsilon)|$
seems, in general, not avoidable,
as can be seen from \cite[Theorem~3.5]{OSZ19_839}.
Nevertheless, in Remark \ref{rmk:Crseofd-1} we already 
hinted at parabolic smoothing implying sufficient regularity 
(under the $d$-dependent provision \eqref{eq:t>} on $\tau$)
for polynomial w.r.\ to $d$ constants in DNN expression rate bounds.

In the following paragraphs, 
we settle for \emph{algebraic DNN expression rates}
and overcome exponential dependence on $d$ in ReLU DNN expression
error bounds under certain \emph{sparsity assumptions on polynomial chaos expansions}, 
as shown in Schwab and Zech \cite{SZ19_2592}, Cohen et al.\ \cite{CDS1} and the references there.
We develop a variation of the results in \cite{SZ19_2592} 
in the present context. 

We impose the following hypothesis, which takes the place of 
the lower bound in \eqref{eq:StrEllPsi}. 
We still impose $| \psi(\xi) | \leq C_2 | \xi |^{2\rho} + C_3$, i.e., 
the second condition in \eqref{eq:StrEllPsi} holds for each $d \in \N$ 
(but $C_2$, $C_3$ and $\rho$ in that condition are allowed to depend on $d$).
\begin{assumption}\label{ass:Anisotr}
	There exists a constant $C_1 >0$ and $(\rho_j)_{j \in \IN}$ with $\frac{1}{2}<\rho_j\leq 1$, 
	such that for each $d \in \IN$, 
	the symbol $\psi_{X^d}$  of the LP $X^d$ satisfies that
	\begin{equation}\label{eq:AdAnis}
	\forall \xi \in \IR^d: 
	\quad 
	\Re\psi_{X^d}(\xi) \geq C_1 \sum_{j=1}^d |\xi_j|^{2\rho_j}
	\;.
	\end{equation}
	Furthermore, 
	\begin{equation*} \rho= \inf_{j \in \IN} \rho_j > \frac{1}{2}.
	\end{equation*}
	The payoff function $\varphi_d$ in \eqref{eq:MultdPDEx} is such that
	$v_d(0,\cdot)= \varphi_d \circ \exp \in L^2(\IR^d)$.
\end{assumption}
In comparison to the lower bound in \eqref{eq:StrEllPsi} the condition \eqref{eq:AdAnis} is restricted to the case $\rho>\frac{1}{2}$. On the other hand, different exponents $\rho_j$ are allowed along each component. Furthermore, note that Assumption~\ref{ass:Anisotr} imposes that $C_1$ does not depend on the dimension $d$.
\begin{remark}\label{rmk:SuffCond}
	Consider the pure diffusion case, i.e., 
	when the characteristic triplet is $(A^d,0,0)$ with a symmetric, positive definite diffusion matrix $A^d$ and 
	L\'evy-symbol $\psi_{X^d}: \IR^d\to \IR: \xi \mapsto \xi^\top A^d \xi$.
	
	A sufficient condition for assumption \eqref{eq:AdAnis} to hold is that 
	the eigenvalues $(\lambda_i^d)_{i=1,\ldots,d}$ of $A^d$ be lower bounded away from zero, 
	\begin{equation}\label{eq:auxEq31} 
	C_1 = \inf_{i,d} \lambda_i^d > 0. 
	\end{equation}
	To see this, 
	write $Q^\top A^d Q = D$ for a diagonal matrix $D$ 
	containing the eigenvalues of $A^d$ and an orthogonal matrix $Q$. 
	Then we obtain for arbitrary $\xi\in \IR^d$
	\[\begin{aligned}
	\psi_{X^d}(\xi) = \xi^\top A^d \xi 
	= 
	(\xi^\top) Q D Q^\top \xi 
	= 
	\sum_{i=1}^d \lambda_i (Q^\top \xi)_i^2 
	& \geq 
	\left(\min_i{\lambda_i} \right) |Q^\top \xi|^2 
	\\ & = 
	\left(\min_i{\lambda_i} \right) |\xi|^2 .
	\end{aligned}\]  
	Therefore condition \eqref{eq:AdAnis} is satisfied 
	with $C_1$ as in \eqref{eq:auxEq31} and $\rho_j=1$ for all $j \in \IN$.

	This condition imposes, in applications, that different assets 
	(modelled by different components of the LP $X^d$) 
	should not become asymptotically (perfectly) dependent as the dimension grows.
\end{remark}

\begin{remark}
	Consider characteristic triplets $(A^d,\gamma^d,\nu^d)$ and the more general case of non-degenerate diffusion, 
	i.e.\ with $A^d$ satisfying the condition \eqref{eq:auxEq31} formulated in Remark~\ref{rmk:SuffCond}. 
	Then the real part of the L\'evy symbol $\psi_{X^d}$ of $X^d$ satisfies for all $\xi \in \R^d$
	\[
	\Re\psi_{X^d}(\xi) = 
	\frac{1}{2} \xi^\top A^d \xi - \int_{\R^d } 
	\left(\cos(\xi^\top y)-1 \right) \nu^d(d y) \geq \frac{1}{2} \xi^\top A^d \xi \geq C_1 |\xi|^2  
	\]
	with $C_1$ as in \eqref{eq:auxEq31}. 
	Hence, Assumption~\ref{ass:Anisotr} is satisfied also in this more general situation. 
	Further examples of LP satisfying Assumption~\ref{ass:Anisotr} 
	are based on stable-like processes and copula-based constructions as e.g.\ in Farkas et al.\ \cite{FRS07}.
\end{remark}

As we shall see below, Assumption \ref{ass:Anisotr} 
ensures good ``separation'' and ``anisotropy'' properties
of the symbol \eqref{eq:symbolD}
of the corresponding L\'evy process $X^d$.

For $\tau>0$ satisfying \eqref{eq:t>}, 
we analyze the regularity of $x\mapsto v_d(\tau,x)$.
From Assumption \ref{ass:Anisotr} we find that for every $\tau>0$,
$x\mapsto v_d(\tau,x)\in L^2(\IR^d)$
and that 
its Fourier transform has the explicit form
\begin{equation}\label{eq:FTvdt}
\hat{v}_d(\tau,\xi) 
= F_{x\to\xi}v_d(\tau,\cdot) 
= \exp\big(-\tau \psi_{X^d}(\xi)\big) \hat{v}_d(0,\xi)\;.
\end{equation}
For a multi-index $\bsnu=(\nu_1,...,\nu_d)\in \IN_0^d$, 
denote by $\partial_x^\bsnu$ the mixed partial derivative 
of total order $|\bsnu|=\nu_1+...+\nu_d$ with respect to $x\in \IR^d$.
Formula \eqref{eq:FTvdt} and Assumption \ref{ass:Anisotr} can be used to show
that for every $\tau>0$, $x\mapsto v_d(\tau,x)$ is analytic at any $x\in \IR^d$.
This is of course the well-known smoothing property of the 
generator of certain non-degenerate L\'evy processes.
To address the curse of dimensionality, we quantify the smoothing
effect in a $d$-explicit fashion.

To this end,
with Assumption \ref{ass:Anisotr}
we calculate for any $\bsnu\in \IN^d_0$ at $x=0$ (by stationarity,
the same bounds hold for the Taylor coefficients at any $x\in \IR^d$)
\[\begin{aligned}
(2\pi)^{d/2}
|\partial^\bsnu_xv_d(\tau,0)|
& = 
\left| \int_{\xi\in \IR^d} (i\xi)^\bsnu \hat{v}_d(\tau,\xi) d\xi  \right|
\\ & \leq 
\int_{\xi\in \IR^d} 
|\hat{v}_d(0,\xi)| \prod_{j=1}^d |\xi_j|^{\nu_j} 
\exp(-\tau C_1 |\xi_j|^{2 \rho_j}) d\xi\;.
\end{aligned} \]
We use \eqref{eq:maxexp} with 
$m\leftarrow \nu_j$, $\kappa \leftarrow C_1 \tau$, $\mu=2 \rho_j$
to bound the product as
$$
\prod_{j=1}^d |\xi_j|^{\nu_j} \exp(-\tau C_1 |\xi_j|^{2 \rho_j})
\leq 
\prod_{j=1}^d \left( \frac{\nu_j}{2 \rho_j \tau C_1  e} \right)^{\nu_j/(2\rho_j)} 
\;.
$$
We arrive at the following bound for the 
Taylor coefficient of order $\bsnu \in \IN_0^d$
of $v_d(t,\cdot)$ at $x=0$:
\begin{align}\nonumber 
|t_\bsnu| 
& = \left| \frac{1}{\bsnu!} \partial^\bsnu_x v_d(\tau,x)\mid_{x=0} \right|
\\ & \label{eq:TaylBd} \leq 
\frac{1}{(2\pi)^{d/2}} \| \hat{v}_d(0,\cdot) \|_{L^1(\IR^d)} 
\prod_{j= 1}^d \frac{1}{\nu_j!} \left( \frac{\nu_j}{2\rho_j \tau C_1 e} \right)^{\nu_j/(2\rho_j)} 
\;.
\end{align}
Stirling's inequality
\begin{equation*}
\forall n\in \IN: \quad n! \geq n^ne^{-n}\sqrt{2\pi n}  \geq n^ne^{-n}
\end{equation*}
implies in \eqref{eq:TaylBd} the bound 
\begin{equation}\label{eq:TaylBd2}
\forall \bsnu\in \IN_0^d: \quad 
|t_\bsnu|
\leq 
\frac{1}{(2\pi)^{d/2}} 
\| \hat{v}_d(0,\cdot) \|_{L^1(\IR^d)} \left({(\bsnu!)^{-1}}  \bsb^\bsnu \right)^{\rho'}\;.
\end{equation}
Here, $\rho' = 1-\frac{1}{2\rho} > 0$ and the positive weight sequence $\bsb = (b_j)_{j\geq 1}$ is given by 
$b_j= (2 \rho_j \tau C_1)^{-1/(2\rho_j \rho')}$, $j=1,2,...$ and multi-index notation is employed:
$\bsnu^{-\bsnu} = (\nu_1^{\nu_1}\nu_2^{\nu_2}...)^{-1}$, 
$\bsb^\bsnu=b_1^{\nu_1}b_2^{\nu_2}...$ and $\bsnu! = \nu_1!\nu_2!...$, 
with the convention $0!=1$ and $0^0 = 1$.

We raise \eqref{eq:TaylBd2}
to a power $q>0$, with $q < 1/\rho'$ and 
sum the resulting inequality over all $\bsnu\in \IN_0^d$
to estimate (generously)
\begin{align*}
\sum_{\bsnu\in \IN_0^d} |t_\bsnu|^q
& \leq 
\frac{\|\hat{v}_d(0,\cdot) \|_{L^1(\IR^d)}^q }{(2\pi)^{dq/2}} 
\sum_{\bsnu\in \IN_0^d}
\left( \frac{1}{\bsnu!}\bsb^\bsnu \right)^{q \rho'}
\\ & \leq \frac{\|\hat{v}_d(0,\cdot) \|_{L^1(\IR^d)}^q }{(2\pi)^{dq/2}} 
\sum_{\bsnu\in \IN_0^d}
\left( \frac{|\bsnu|!}{\bsnu!}\bsb^\bsnu \right)^{q \rho'}
\;.
\end{align*}
To obtain the estimate \eqref{eq:TaylBd2},
one could also use the $L^2$-bound with explicit constant 
derived in \eqref{eq:auxEq30}, \eqref{eq:Cexplicit}.

Under hypothesis \eqref{eq:AdAnis} and for $\tau>0$ satisfying \eqref{eq:t>},
$q$-summability of the Taylor coefficients follows.
\[
\begin{aligned}
\sum_{\bsnu\in \IN_0^d} |t_\bsnu|^q
& \leq 
\frac{\|\hat{v}_d(0,\cdot) \|_{L^1(\IR^d)}^q }{(2\pi)^{dq/2}} 
\sum_{\bsnu\in \IN_0^d}
\left( \frac{|\bsnu|!}{\bsnu!}\bsb^\bsnu \right)^{q\rho'}
\\ 
& \leq \frac{\|\hat{v}_d(0,\cdot) \|_{L^1(\IR^d)}^q }{(2\pi)^{dq/2}}  
\sum_{k=0}^\infty \sum_{\bsnu\in \IN_0^d : |\bsnu|=k}
\left( \frac{|\bsnu|!}{\bsnu!}(2 \rho \tau C_1)^{-k/(2 \rho \rho')} \right)^{q \rho'}
\\ 
& \leq \frac{\|\hat{v}_d(0,\cdot) \|_{L^1(\IR^d)}^q }{(2\pi)^{dq/2}}  
\sum_{k=0}^\infty  (2 \rho \tau C_1)^{-q k/(2\rho)} 
\sum_{\bsnu\in \IN_0^d : |\bsnu|=k}
\left( \frac{|\bsnu|!}{\bsnu!} \right)^{q\rho'}.
\end{aligned}
\]	
Using that $|\bsnu|! \geq \bsnu!$ and that $1 \geq q \rho' >0 $ we obtain with the multinomial theorem
\[
\sum_{\bsnu\in \IN_0^d : |\bsnu|=k}
\left( \frac{|\bsnu|!}{\bsnu!} \right)^{q\rho'} \leq \sum_{\bsnu\in \IN_0^d : |\bsnu|=k}
\frac{|\bsnu|!}{\bsnu!} = d^k
\;.
\]
Hence,
provided that 
\begin{equation}\label{eq:tCond} 
\tau > \tau_0(d) \text{ with } \tau_0(d) = \frac{d^{2\rho/q}}{2 \rho C_1},
\end{equation} 
it follows that 
\begin{equation}\label{eq:Tsumq}
\| \{ t_\bsnu \} \|_{\ell^q(\IN_0^d)}^q
=
\sum_{\bsnu\in \IN_0^d} |t_\bsnu|^q
\leq 
\frac{\|\hat{v}_d(0,\cdot) \|_{L^1(\IR^d)}^q}{(2\pi)^{dq/2}} \frac{1}{1-d (2 \rho \tau C_1)^{-q/(2\rho)}}.
\end{equation}
Therefore, we have proved $q$-summability of the Taylor coefficients of the map 
$x\mapsto v_d(\tau,x)$ at $x=0$ for any $\tau > \tau_0(d)$ as in \eqref{eq:tCond}. 
The $q$-norm $ \| \{ t_\bsnu \} \|_{\ell^q(\IN_0^d)}$ is bounded independently of $d$, 
provided that $\tau > \tau_0(d)$ and $\|\hat{v}_d(0,\cdot) \|_{L^1(\IR^d)}(2\pi)^{-d/2}$ 
is bounded independently of $d$.

The $q$-summability \eqref{eq:Tsumq} of the Taylor coefficients of 
$x\mapsto v_d(\tau,x)$ at $x=0$ with $q=1$ 
implies for $\tau > \tau_0(d)$
absolute, pointwise convergence in the cube $[-1,1]^d$ of 
\begin{equation}\label{eq:Tgpc}
v_d(\tau,x)
=
\sum_{\bsnu\in \IN_0^d} t_\bsnu x^\bsnu  \;, \quad x^\bsnu = x_1^{\nu_1}x_2^{\nu_2}....
\end{equation}
Furthermore, 
as was shown in Schwab and Zech \cite[Lemma~2.8]{SZ19_2592},
the fact that the sequence $\{ t_\bsnu \}$ is $q$-summable for some $0 < q < 1$ 
and the coefficient bound \eqref{eq:TaylBd2} imply
that for $\tau > \tau_0(d)$ 
with $\tau_0(d)$ as defined in \eqref{eq:tCond}
exists a sequence $\{ \Lambda_n \}_{n\geq 1} \subset \IN_0^d$ of
\emph{nested, downward closed multi-index sets} (i.e., if $\bse_j\in \Lambda_n$ then $\bse_i\in \Lambda_n$ 
for all $0\leq i\leq j$)
$\Lambda_n \subset \IN_0^d$
with $\#(\Lambda_n) \leq n$ such that general polynomial chaos (gpc for short) approximations
given by the partial sums 
\begin{equation*}
v_d^{\Lambda_n}(\tau,x) = \sum_{\bsnu\in \Lambda_n} t_\bsnu x^\bsnu
\end{equation*}
converge at dimension-independent rate $r=1/q-1$ (see, e.g., Cohen et al.\ \cite[Lemma~5.5]{CDS1})
\begin{equation*}
\sup_{x \in [-1,1]^d}| v_d(\tau,x) - v_d^{\Lambda_n}(\tau,x)|
\leq 
\sum_{\bsnu\in \IN_0^d\backslash \Lambda_n} |t_\bsnu| 
\leq 
n^{-(1/q-1)} \| \{ t_\bsnu \} \|_{\ell^q(\IN_0^d)} \;.
\end{equation*}
The summability \eqref{eq:Tsumq} of the coefficients in the
Taylor gpc expansion \eqref{eq:Tgpc} also implies quantitative bounds on 
the expression rates of ReLU DNNs.
With \cite[Theorem~2.7, (ii)]{SZ19_2592}, 
we find that there exists a constant $C>0$ independent of $d$ 
such that
$$
\sup_{\bsnu\in \Lambda_n} |\bsnu|_1 \leq C\big(1+\log(n)\big) \;.
$$
We now refer to \cite[Theorem~3.9]{SZ19_2592} (with $q$ in place of $p$
in the statement of that result) and, observing that in the
proof of that theorem, only the $p$-summability 
of the Taylor coefficient sequence $\{ t_\bsnu \}$ was used,
we conclude that for $\tau>0$ satisfying \eqref{eq:tCond}
there exists a constant $C>0$ that is independent of $d$
and, for every $n\in \IN$ exists a ReLU DNN $\tilde{v}_d^n$ with 
input dimension $d$, such that 
\begin{equation}\label{eq:ReLUExRt}
\begin{array}{c}
\displaystyle
M(\tilde{v}_d^n) \leq C \Big(1+n\log(n)\log\big(\log(n)\big)\Big),\, \, 
L(\tilde{v}_d^n) \leq C\Big(1+\log(n)\log\big(\log(n)\big)\Big),
\\ \\
\displaystyle
\sup_{x \in [-1,1]^d}| v_d(\tau,x) - \Rl(\tilde{v}_d^n)(x) |
\leq 
C n^{-(1/q-1)} \;.
\end{array}
\end{equation}
%
\section{Conclusion and Generalizations}
\label{seC:ConclGen}
We proved that prices of European style
derivative contracts on baskets of $d\geq 1$ 
assets in exponential L\'evy models
can be expressed by ReLU DNNs to accuracy $\varepsilon > 0$
with DNN size polynomially growing in $\varepsilon^{-1}$ and $d$,
thereby overcoming the curse of dimensionality. 
The technique of proof was based on  probabilistic arguments and provides 
expression rate bounds that scale algebraically
in terms of the DNN size.
We then also provided an alternative, analytic argument, 
that allows to prove \emph{exponential expressivity of ReLU DNNs} 
of the option price, i.e.\ of the map $s\mapsto u(t,s)$ 
at any fixed time $0<t<T$, with DNN size growing
polynomially w.r.\ to $\log(\varepsilon)$ to achieve
accuracy $\varepsilon > 0$.
For sufficiently large $t>0$, based on analytic arguments 
involving parabolic smoothing and sparsity of generalized polynomial
chaos expansions, we established in \eqref{eq:ReLUExRt} a second,
algebraic expression rate bound for ReLU DNNs that is free from
the curse of dimensionality. 
In a forthcoming work Gonon and Schwab \cite{GononChS2} we 
address PIDEs \eqref{eq:MultdPDEx} with non-constant coefficients. 
In addition, the main result of the present paper,  
Thm.\ref{prop:Ddresult}, could be extended in the following directions.

First, the expression rates are, almost certainly, not optimal in general;
for high-dimensional diffusions, which are a
particular case with $A^d = I$ and $\nu^d = 0$,
in Elbr\"achter et al.\ \cite{EGJS18_787} we established for particular payoff functions
a spectral expression rate in terms of the DNN size,
free from the curse of dimensionality.

Solving Hamilton-Jacobi partial integrodifferential equations (HJPIDEs for short) by DNNs: 
it is classical that the Kolmogorov equation for the 
exponential LP $X^d$ in Section \ref{sec:ExpLP} is, in fact, a
special case of a HJPIDE (e.g.\ Barles et al.\ \cite{BBP1997}, Barles and Imbert \cite{BarImb2009}). 
In a forthcoming work \cite{GononChS2} we aim at proving that the
expression rate bounds obtained in Section \ref{sec:DNNMultivLevy}
imply corresponding expression rate bounds for ReLU DNNs 
which are free from the curse of dimensionality 
for viscosity solutions of general HJPIDEs associated to the LP $X^d$ and 
for its exponential counterparts. 

Barriers: 
We considered payoff functions corresponding to European style contracts. 
Here, the stationarity of the LP $X^d$
and exponential L\'evy modelling allowed to reduce our analysis
to Cauchy problems of the Kolmogorov equations of $X^d$ in 
$\IR^d$. 
In L\'evy models in the presence of barriers,
option prices generally exhibit singularities at the barriers.
More involved versions of the Fourier transform based representations 
are available (involving a so-called Wiener-Hopf factorization of the 
Fourier symbol, see, e.g., Boyarchenko and Levendorski\u{\i} \cite{BoyLevBarrLevy2002}).
For LPs $X^d$ with bounded exponential moments,
the present regularity analysis may be localized to compact subsets, 
well separated from the barriers, subject to an exponentially small 
localization error term; see Hilber et al.\ \cite[Chapter~10.5]{CMQFBook}. 
Here, the semiheavy tails of the LPs $X^d$ enter crucially in the analysis.
We therefore expect the present 
DNN expression rate bounds to remain valid also for barrier contracts,
at least far from the barriers, for the LPs $X^d$ considered here.

Dividends:
We assumed throughout that contracts do not pay dividends; however,
including a dividend stream (with constant over $(0,T]$ rate) 
on the underlying does not change the mathematical arguments;
we refer to Lamberton and Mikou \cite[Section~3.1]{LambMik08} for a complete
statement of exponential L\'{e}vy models with constant dividend payment rate $\delta > 0$, 
and for the corresponding pricing of European and American style contracts 
for such models.

American style contracts:
Deep learning based algorithms for the numerical solution
of optimal stopping problems for Markovian models
have been recently proposed in Becker et al.\ \cite{Becker2019}.
For the particular case of American style contracts
in exponential L\'evy models, \cite{LambMik08} provide an analysis in the
univariate case, and establish qualitative properties of the exercise boundary
$\{ (b(t),t): 0<t<T \}$.

Here, for geometric L\'evy models, in certain situations 
($d=1$, i.e.\ single risky asset,
monotonic, piecewise analytic payoff function) the option price,
as a function of $x\in \IR$ at fixed $0<t<T$,
is shown in \cite{LambMik08}
to be a piecewise analytic function which is, globally, H\"older continuous
with a possibly algebraic singularity at the exercise boundary $b(t)$.
This holds, likewise, for the price expressed 
in the logarithmic coordinate $x=\log(s)$.
The ReLU DNN expression rate
of such functions has been analyzed in Opschoor et al.\ \cite[Section~5.4]{Opschoor2020}.
In higher dimensions $d>1$, recently also higher H\"older regularity
of the price in symmetric, stable 
L\'evy models has been obtained for smooth payoffs in Barrios et al.\ \cite{RosOto2018}.

{\small 
\bibliographystyle{amsalpha}
\bibliography{references}
}
\end{document}